\documentclass[a4paper,11pt]{amsart}

\usepackage{amsmath,amssymb,amsthm,mathtools}
\usepackage{geometry}
\usepackage{hyperref}
\hypersetup{colorlinks=true,linkcolor=blue,citecolor=blue,urlcolor=blue}

\allowdisplaybreaks

\numberwithin{equation}{section}

\theoremstyle{plain}
\newtheorem{thm}{Theorem}[section]
\newtheorem{prop}[thm]{Proposition}
\newtheorem{lem}[thm]{Lemma}
\newtheorem{coro}[thm]{Corollary}
\newtheorem{mainthm}{Theorem}
\newtheorem{defn}[thm]{Definition}
\newtheorem{rem}[thm]{Remark}

\newcommand{\Cat}{\operatorname{Cat}}
\newcommand{\dif}{\mathrm{d}}

\title[Compact Helicoidal Minimal Surfaces]
{Compactness and Willmore Energy of Helicoidal Minimal Surfaces
in the 3-Sphere}

\author[J. Q. Ge]{Jianquan Ge}
\address{School of Mathematical Sciences, Laboratory of Mathematics and Complex Systems, Beijing Normal University, Beijing 100875, P. R. China.}
\email{jqge@bnu.edu.cn}

\author[S. L. Li]{Shilin Li$^{*}$}
\address{School of Mathematical Sciences, Laboratory of Mathematics and Complex Systems, Beijing Normal University, Beijing 100875, P. R. China.}
\email{lishilin@mail.bnu.edu.cn}

\subjclass[2020]{53C20, 53C24, 53C42}
\date{}
\keywords{minimal surfaces, helicoidal surfaces, compactness,
Willmore energy, Lawson associated family, automorphism group}
\thanks{$^{*}$ Corresponding author.}
\thanks{J. Q. Ge is partially supported by NSFC (No. 12571049) and the Fundamental Research Funds for the Central Universities.}
\begin{document}

\begin{abstract}
Recently, I.~Castro, I.~Castro-Infantes, and J.~Castro-Infantes
introduced a two-parameter family of helicoidal minimal surfaces in
$\mathbb S^3$, denoted by $\operatorname{Hel}_c^h$, with the pitch
$h\geq0$ and $c\in[0,1/2)$. At
$(h,c)=(0,0)$, the surface $\operatorname{Hel}_0^0$ is the totally
geodesic sphere, while the limiting surface as $c\to1/2^-$ is the
Clifford torus. The subfamily $c=0$, $h>0$, consists of the Lawson
spherical helicoids, whereas the subfamily $h=0$, $0<c<1/2$,
consists of the spherical catenoids, whose compact members are the Otsuki
tori. Castro et al.\ remarked that it is not an easy problem to determine when $\operatorname{Hel}_c^h$ is a compact surface.

In this paper, we resolve this compactness problem, namely we prove that
the compact members of the family are characterized by
\[
\operatorname{Hel}_c^h \text{ is compact}
\quad\Longleftrightarrow\quad
\begin{cases}
h\in\mathbb Q, & c=0,\\[1mm]
h\in\mathbb Q\ \text{and}\ q(h,c)\in\mathbb Q,
& 0<c<1/2,
\end{cases}
\]
where $q(h,c)$ is given by an explicit integral. For $0<c<1/2$, 
every compact quotient surface induced by the
parametrization is a torus. For $c=0$ and $h=j/\nu>0$ written in
lowest terms, the quotient of the parameter plane by the full automorphism
group is a torus when
$j$ and $\nu$ are both odd and a Klein bottle otherwise. 
The Willmore energies of the corresponding compact immersed surfaces are
computed explicitly. Along each Lawson associated family of a spherical
catenoid, only finitely many parameter values yield compact helicoidal
surfaces with Willmore energy below any prescribed bound.
\end{abstract}

\maketitle

\section{Introduction}

Minimal surfaces in the round three-sphere form a classical setting in
which local differential geometry, symmetry, and global topology interact
in an especially explicit way. The simplest closed examples are the
totally geodesic sphere and the Clifford torus. Lawson constructed broad
families of compact minimal surfaces in $\mathbb S^3$
\cite{Lawson1970}, while Otsuki discovered a countable family of
rotationally invariant immersed minimal tori \cite{Otsuki1970}. The
symmetry-reduction approach to such examples was developed more generally
by Hsiang and Lawson \cite{HsiangLawson1971}. Rotational minimal surfaces
in space forms were subsequently studied by do Carmo and Dajczer
\cite{doCarmoDajczer1983}, and those in $\mathbb S^3$ were studied
further by Ripoll \cite{Ripoll1989}. For a broader overview of minimal
surfaces in $\mathbb S^3$, see the excellent survey by Brendle \cite{Brendle2013Survey}. These
constructions illustrate a recurring phenomenon: local differential
equations may produce continuous families of minimal immersions, whereas
compactness requires additional periodicity conditions.

Castro, Castro-Infantes, and Castro-Infantes introduced a two-parameter
family of helicoidal minimal surfaces in $\mathbb S^3$, denoted by
$\operatorname{Hel}_c^h$, with $h\geq0$ and $0\leq c<1/2$, where
$h$ is the pitch \cite{Castro2024}.  At
$(h,c)=(0,0)$, the surface $\operatorname{Hel}_0^0$ is the totally
geodesic sphere, while the limiting surface as $c\to1/2^-$ is the
Clifford torus. The subfamily $c=0$, $h>0$, consists of the Lawson spherical
helicoids, whereas the subfamily $h=0$, $0<c<1/2$, consists of the
spherical catenoids. The compact spherical catenoids are precisely the Otsuki tori
\cite{Otsuki1970,Castro2024}, whose Willmore energies were studied by Hu
and Song \cite{HuSong2012}. For $h>0$ and $0<c<1/2$, Castro et al.\
obtained local helicoidal parametrizations and identified the resulting
surfaces within the associated families of spherical catenoids. Their
local description, however, does not determine when these parametrizations
close globally; this problem was left open in
\cite[Remark~5.9]{Castro2024}.

The present paper determines exactly which of these helicoidal surfaces
are compact. More precisely, the local profile
curves are first extended globally, yielding globally defined helicoidal
minimal immersions. Necessary and sufficient rationality conditions for compactness are then
established, the automorphism groups are determined, and compact
realizations are described in terms of finite-index subgroups. Explicit
formulas are obtained for the Willmore energies of the quotient surfaces
by the automorphism groups. These results are further applied to the
Lawson associated families of spherical catenoids. Using the identification
of their members with helicoidal minimal surfaces in
\cite[Theorem~5.6]{Castro2024}, we prove that, for each fixed spherical
catenoid, only finitely many members are compact and have
quotients by their automorphism groups with Willmore energy below a prescribed
bound.

For $0<c<1/2$, the local unit-speed profile curve is extended smoothly
to a globally defined curve $\xi_c^h$, whose height function $z(s)$ is
smooth and periodic. This produces a
global helicoidal minimal immersion
\[
X_c^h=X^h(\xi_c^h):
\mathbb R^2\longrightarrow\mathbb S^3.
\]
This construction is carried out in
Section~\ref{sec:global_immersions}, and the resulting immersion is shown
to be minimal in Proposition~\ref{prop:global_minimal_immersion}. For $c=0$, the
corresponding global parametrization is
\[
X_0^h(s,t)
=
\bigl(\cos s\,e^{iht},\sin s\,e^{it}\bigr).
\]
For $0<c<1/2$, the longitude increment computed in
Lemma~\ref{lem:longitude_increment} is normalized to define $q(h,c)$
in Definition~\ref{def:q}. Here $\operatorname{Hel}_c^h$ is called compact if it has a compact
immersed realization locally parametrized by $X_c^h$; the precise
definition is given in Definition~\ref{def:compactness}.

\begin{mainthm}[Compactness criterion]\label{mainthm:compactness}
For $h\geq0$ and $0\leq c<1/2$,
\[
\operatorname{Hel}_c^h \text{ is compact}
\quad\Longleftrightarrow\quad
\begin{cases}
h\in\mathbb Q,
& c=0,\\[1mm]
h\in\mathbb Q\ \text{and}\ q(h,c)\in\mathbb Q,
& 0<c<1/2.
\end{cases}
\]
\end{mainthm}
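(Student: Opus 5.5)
The plan is to reduce compactness to the existence of a rank-two lattice $\Lambda\subset\mathbb R^2$ of parameter translations under which $X_c^h$ is invariant. By Definition~\ref{def:compactness}, $\operatorname{Hel}_c^h$ is compact exactly when $X_c^h$ factors through a compact quotient of $\mathbb R^2$. Since $X_c^h$ is a real-analytic immersion, any compact immersed realization lifts to $\mathbb R^2$ and is covered by the parametrization. So compactness is equivalent to the group of translations $(s,t)\mapsto(s+a,t+b)$ preserving $X_c^h$ containing a lattice. Rotations or reflections of the parameter plane, as in the Klein bottle case, only pass to finite index and do not affect this. We first write the immersion in helicoidal form,
\[
X^h(\xi)(s,t)=\bigl(e^{iht}\,\xi_1(s),\;e^{it}\,\xi_2(s)\bigr),
\]
with $\xi_c^h$ the global profile curve, so that $t\mapsto X(s,t)$ is the orbit of the one-parameter group $\phi_t(z,w)=(e^{iht}z,e^{it}w)$ of isometries.

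\emph{Case $c=0$.} Here $X_0^h(s+a,t+b)=X_0^h(s,t)$ for all $(s,t)$ forces the following. Comparing the moduli $|\cos s|$ and $|\sin s|$ gives $a\in\pi\mathbb Z$. If $a=2\pi k$, we need $e^{ihb}=1$ and $e^{ib}=1$, i.e.\ $b\in2\pi\mathbb Z$ and $hb\in2\pi\mathbb Z$. If $a$ is an odd multiple of $\pi$, we need $e^{ihb}=-1=e^{ib}$. Hence the translation group contains $(2\pi,0)$, and it contains a second independent vector iff some $b\neq0$ has $b/2\pi\in\mathbb Z$ and $hb/2\pi\in\mathbb Z$, i.e.\ iff $h\in\mathbb Q$. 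The odd case likewise needs $h\in\mathbb Q$, since $b$ and $hb$ must both be odd multiples of $\pi$. This settles $c=0$.

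\emph{Case $0<c<1/2$.} The profile $\xi_c^h$ has periodic height $z(s)$ with some period $L$. We use Lemma~\ref{lem:longitude_increment}: over one period of $z$, the curve returns to the same height with its longitude rotated by an increment $\Delta\theta$. This means $\xi_c^h(s+L)$ is obtained from $\xi_c^h(s)$ by some $\phi_{\tau}$ composed with a phase rotation, and $q(h,c)$ is by Definition~\ref{def:q} the normalized value $\Delta\theta/2\pi$. Consequently $X(s+L,t)=X(s,t+\tau)$ up to a fixed rotation $R_\Delta=(e^{i\alpha},e^{i\beta})$ in the torus $T^2\subset U(2)$ commuting with $\phi_t$.

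Every translation symmetry must have $a\in L\mathbb Z$, because $z(s)$ is a geometric invariant (the distance to the axis circles) whose minimal period is $L$. Symmetries with $a=0$ amount to $\phi_b=\mathrm{id}$, i.e.\ $b\in2\pi\mathbb Z$ and $hb\in2\pi\mathbb Z$, which is nontrivial iff $h\in\mathbb Q$. A symmetry with $a=nL$ requires that $R_\Delta^n$ be absorbed by some $\phi_b$. In other words, $(n\alpha,n\beta)$ must lie in the closed one-parameter subgroup $\{(hb,b)\}$ mod $2\pi\mathbb Z^2$. When $h\in\mathbb Q$ that subgroup is a closed circle, and this membership condition for some $n\ge1$ is exactly $q(h,c)\in\mathbb Q$. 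Conversely, if $h,q\in\mathbb Q$, we explicitly produce the two generators $(0,2\pi\nu)$ and $(nL,b_0)$. If $h\notin\mathbb Q$, the set of $b$-periods collapses to $\{0\}$ and there is at most a rank-one group, so the surface is non-compact.

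\textbf{Main obstacle.} The main obstacle is the bookkeeping in the case $0<c<1/2$. One must check that the rotation $R_\Delta$ from Lemma~\ref{lem:longitude_increment} really decomposes as $\phi_\tau$ times a rotation, and that the normalization in Definition~\ref{def:q} makes ``$R_\Delta^n\in\{\phi_b\}$'' equivalent to $q\in\mathbb Q$, rather than to some other rational combination of $q$ and $h$. I would verify this by writing $R_\Delta$ in coordinates and solving the resulting linear congruences. A second point to confirm is the minimality of the period $L$ of $z$, so that no symmetries with $a\notin L\mathbb Z$ exist; this follows from the distance-to-axis interpretation of $z$.
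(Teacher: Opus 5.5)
\textbf{There is a genuine gap in the necessity direction.} Your sufficiency direction is fine: if $h=j/\nu$ and $q$ are rational, the periods $(0,2\pi\nu)$ and $(rL,0)$, with $r$ the denominator of $q$, resp.\ $(2\pi,0)$ and $(0,2\pi\nu)$ for $c=0$, make $X_c^h$ descend to a torus.

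Definition~\ref{def:compactness} only says that $f:M\to\mathbb S^3$ is \emph{locally} of the form $X_c^h\circ\Phi_p$. It does not say that $f$ factors through a quotient of $\mathbb R^2$. Your sentence ``since $X_c^h$ is a real-analytic immersion, any compact immersed realization lifts to $\mathbb R^2$ and is covered by the parametrization'' is exactly what has to be proved, and analyticity is not what proves it. Two things are needed.
\begin{enumerate}
\item Every transition map $F$ between lifted charts on the universal cover $\widetilde M$ must be the restriction of a global symmetry of $X$. Here $F$ is a diffeomorphism between small connected open sets with $X\circ F=X$. This is what lets the charts glue into a developing map $D:\widetilde M\to\mathbb R^2$ with $f\circ p=X\circ D$.
\item $D$ is a local isometry from the complete metric $p^*f^*g_{\mathbb S^3}$ to $X^*g_{\mathbb S^3}$. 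You must show it is therefore a covering, hence a diffeomorphism, so that $M\cong\mathbb R^2/\Gamma$ with $\Gamma\le\operatorname{Aut}(X)$.
\end{enumerate}
Even granting analytic continuation of local symmetries, you still need a completeness and covering argument for (2). The paper supplies (1) by explicitly classifying local symmetries (Lemmas~\ref{lem:local_transition_rigidity} and~\ref{lem:lawson_complete_local_symmetry}), and (2) by Lemmas~\ref{lem:translational_developing_map} and~\ref{lem:covering_criterion}. Neither ingredient appears in your proposal.

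The same gap undermines ``rotations or reflections only pass to finite index.'' You classify only translations $(s,t)\mapsto(s+a,t+b)$. A priori, deck transformations are isometries of the non-flat metric $X^*g_{\mathbb S^3}$, not Euclidean motions, so no Bieberbach-type argument is available until you know the form of every local symmetry.
\begin{itemize}
\item For $0<c<1/2$: comparing second coordinates gives $T=t+2\pi n$ and $z(S)=z(s)$, hence $S=\pm s+mL$. The reflection $s\mapsto -s$ preserves the even function $z$. It is excluded only because $\lambda(S)-\lambda(s)$ is constant while $\dot\lambda>0$, which forces $S_s=1$.
\item For $c=0$: you must show that all local symmetries have the form $(\varepsilon s+k\pi,t+m\pi)$, and that for irrational $h$ only $(s+2\pi r,t)$ survive.
\end{itemize}

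Two smaller points.
\begin{itemize}
\item For $h\notin\mathbb Q$, the $b$-periods need not collapse to $\{0\}$. If $q+h\in\mathbb Z$, then $(L,2\pi)$ is a period, and by Lemma~\ref{lem:q_range} this occurs for suitable $c$ when $h$ is irrational near $0.3$. The correct argument is that two independent solutions of $mq+nh\in\mathbb Z$ force $q,h\in\mathbb Q$ via the determinant.
\item The ``main obstacle'' you name is trivial. One has $X(s+L,t)=(e^{2\pi i q},1)\cdot X(s,t)$, so $X(s+mL,t+2\pi n)=X(s,t)$ iff $mq+nh\in\mathbb Z$. Minimality of $L$ comes from strict monotonicity of $z$ on half-periods, not from the distance-to-axis interpretation.
\end{itemize}
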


Thus compactness is characterized by the rationality conditions above.
The condition $h\in\mathbb Q$ gives a period in the $t$-direction,
while $q(h,c)\in\mathbb Q$ gives a period in the $s$-direction
after one full period of the height function. The cases $0<c<1/2$ and
$c=0$ are treated in Theorems~\ref{thm:positive_c_compactness}
and~\ref{thm:lawson_compactness}, and combined in
Theorem~\ref{thm:compactness}. In the rotational subfamily $h=0$, the
criterion recovers the classical closing condition for spherical
catenaries; the resulting compact rotational surfaces are the Otsuki
tori. The range result for $q(h,c)$ also implies that infinitely many
members with $0<c<1/2$ are compact; see
Corollary~\ref{cor:infinitely_many_compact}.

For a compact member with $0<c<1/2$, let $Q_c^h$ denote the quotient
of $\mathbb R^2$ by the automorphism group of $X_c^h$. Then $Q_c^h$ is
a torus, and every compact immersion locally modeled on $X_c^h$ is
induced by a finite-sheeted covering of $Q_c^h$. When $c=0$ and
$h=j/\nu>0$ is written in lowest terms, let $Q_0^h$ denote the quotient
surface by the automorphism group of $X_0^h$. It is a torus when $j$
and $\nu$ are both odd, and a Klein bottle when exactly one of $j$ and
$\nu$ is even. This distinction arises from the presence of glide
reflections in the automorphism group. The corresponding descriptions
in terms of finite-index subgroups are given in
Theorems~\ref{thm:positive_c_compact_realization_classification}
and~\ref{thm:lawson_compact_realization_classification}.

\begin{mainthm}[Willmore energy]\label{mainthm:willmore}
Suppose that $(h,c)\neq(0,0)$ and that
$\operatorname{Hel}_c^h$ is compact. The Willmore energy of $Q_{c}^{h}$ is an explicit positive integer
multiple of $\mathcal E(h,c)$, where $\mathcal E(h,c)$ is defined by
integrating the induced area measure over a specified parameter rectangle.
For $0<c<1/2$, this quantity is evaluated in terms of complete elliptic
integrals.
\end{mainthm}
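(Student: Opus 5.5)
Since $X_c^h$ is minimal (Proposition~\ref{prop:global_minimal_immersion}), the mean curvature vanishes, and the Willmore energy $\int (H^2+1)\,\dif A$ of any compact immersed surface locally modeled on $X_c^h$ reduces to its area. The plan is therefore to compute the area of $Q_c^h$ as an integral of the induced area density over a fundamental domain of the automorphism group, and then to express that fundamental domain as an integer number of copies of a fixed parameter rectangle $R(h,c)$. We set $\mathcal E(h,c):=\int_{R(h,c)}\sqrt{EG-F^2}\,\dif s\,\dif t$.

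First we write the induced metric of the helicoidal parametrization. The profile $\xi_c^h$ is unit speed and the immersion is equivariant under the helicoidal one-parameter group in $t$, so the coefficients $E,F,G$ depend only on $s$. The area density $\sqrt{EG-F^2}$ is then a smooth positive function of $s$ alone, periodic with the period $S$ of the height function $z(s)$. For $c=0$ it is explicit: from $X_0^h(s,t)=(\cos s\,e^{iht},\sin s\,e^{it})$ we get $E=1$, $F=0$, $G=h^2\cos^2 s+\sin^2 s$. We take $R(h,c)=[0,S]\times[0,2\pi]$ (for $c=0$ we take $S=2\pi$, or $\pi$ when symmetry allows).

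Next we use the compactness criterion, Theorem~\ref{mainthm:compactness}, together with the automorphism group descriptions in Theorems~\ref{thm:positive_c_compact_realization_classification} and~\ref{thm:lawson_compact_realization_classification}. With $h=j/\nu$ and $q=a/b$ in lowest terms, the translation lattice is generated by a $t$-translation $2\pi\nu$ (or its appropriate multiple) and an $s$-translation $bS$ combined with a $t$-shift. A fundamental domain for this lattice has the same area as the rectangle $[0,bS]\times[0,2\pi\nu]$, because the area density depends only on $s$ and a sheared parallelogram integrates to the same value.

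The number of copies of $R$ is then $b\nu$. We divide by the index of the lattice in the full automorphism group; in the Lawson case this group contains glide reflections, and the index is $2$ in the Klein bottle case. The resulting multiplier is a positive integer, possibly after a parity check. That check is the main obstacle: one must verify that the extra automorphisms (rotations, or glide reflections when exactly one of $j,\nu$ is even) have index dividing $b\nu$ exactly. I would first try to establish this from the explicit generators, by showing that each extra generator preserves the area density and maps $R$ to a union of translates of a half-rectangle. If the index does not divide $b\nu$, I would shrink $R$ to a half-period rectangle and define $\mathcal E$ accordingly.

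Finally, for $0<c<1/2$ we substitute the explicit profile ODE. The first integral of the profile reduces $\int_0^S\sqrt{EG-F^2}\,\dif s$, via $u=z^2$ or a similar substitution, to an integral of the form $\int \frac{P(u)\,\dif u}{\sqrt{(u-u_1)(u_2-u)(u-u_3)}}$. The standard reduction then yields complete elliptic integrals of the first and second kind with explicit modulus depending on $c$ and $h$. Positivity follows because the area density is positive.
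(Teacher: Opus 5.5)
Your reduction to area and your rectangle $[0,L]\times[0,2\pi]$ for $0<c<1/2$ agree with the paper, but the lattice description that produces your multiplier is wrong.

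With $q=a/b$ and $h=j/\nu$ in lowest terms, $\operatorname{Per}(X_c^h)=\{(mL,2\pi n):mq+nh\in\mathbb Z\}$. Its projection to the $s$-axis is not $bL\,\mathbb Z$. You only need $mq\in\mathbb Z+h\mathbb Z=\frac1\nu\mathbb Z$, i.e.\ $b\mid m\nu$, so the shortest $s$-component is $\frac{b}{\gcd(b,\nu)}L$. Hence $\Lambda_X$ has index $\operatorname{lcm}(b,\nu)$ in $\Lambda_{\mathrm{par}}=\langle(L,0),(0,2\pi)\rangle_{\mathbb Z}$, not $b\nu$. Your count $b\nu$ therefore computes the area of a $\gcd(b,\nu)$-fold cover of $Q_c^h$, not of $Q_c^h$ itself.

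This really happens. For $h=1/3$, Lemma~\ref{lem:q_range} gives some $c$ with $q=13/18$. Then $(6L,4\pi)\in\Lambda_X$, since $6\cdot\frac{13}{18}+2\cdot\frac13=5$, so the correct multiplier is $18$, not $54$. The missing step is the index computation of Lemma~\ref{lem:period_lattice_index}: $\Lambda_X$ is the kernel of $\chi(m,n)=mq+nh+\mathbb Z$ on $\mathbb Z^2$, and the image of $\chi$ is cyclic of order $\operatorname{lcm}(b,\nu)$. Also, for $0<c<1/2$ there is nothing to divide by. The local rigidity argument (using $\dot\lambda_c^h>0$) shows every automorphism is a translation (Corollary~\ref{cor:positive_c_symmetry_group}), so the ``rotations'' you worry about do not exist.

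In the Lawson case you leave the decisive step open (``possibly after a parity check''). There is no divisibility obstruction once you use that the density $\sqrt{h^2\cos^2s+\sin^2s}$ has period $\pi$ and take $R=[0,\pi]\times[0,2\pi]$, as the paper does.

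For $j,\nu$ both odd, $\operatorname{Aut}(X_0^h)$ is the translation group of $\langle(\pi,\pi\nu),(0,2\pi\nu)\rangle_{\mathbb Z}$. This lattice has covolume $2\pi^2\nu$, giving $\nu\,\mathcal E(h,0)$.

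For mixed parity, the translation lattice $\langle(2\pi,0),(0,2\pi\nu)\rangle_{\mathbb Z}$ gives $2\nu\,\mathcal E(h,0)$. The density is invariant under the glide reflection $\rho_{j,\nu}$, which generates the index-two extension, so the Klein bottle has exactly half this energy, $\nu\,\mathcal E(h,0)$. With your default $S=2\pi$ the Klein bottle would get $\nu/2$ copies, which is precisely the parity problem you ran into.

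Your elliptic reduction via $u=z^2$ (a linear numerator over a cubic radical) is workable. The substitution $z^2=\frac{1-\delta}{2}+\delta\sin^2\theta$ gives the cleaner closed form $\mathcal E(h,c)=4\pi\sqrt{A}\,E(k)$ directly.
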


The explicit evaluation of $\mathcal E(h,c)$ is given in
Theorem~\ref{thm:parameter_rectangle_integral}. The energies of the quotient surfaces by the automorphism groups are
computed in Theorems~\ref{thm:total_energy_lattice_index}
and~\ref{thm:lawson_quotient_energy}.

Our final main result, proved in
Theorem~\ref{thm:finiteness}, is the following.

\begin{mainthm}[Finiteness under an energy bound]
\label{mainthm:finiteness}
Fix a spherical catenoid and $W_0>0$. Along its Lawson associated
family, only finitely many parameter values yield compact helicoidal
surfaces whose quotients by the automorphism groups have Willmore
energy at most $W_0$.
\end{mainthm}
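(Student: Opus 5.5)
The plan is to reduce the statement to three ingredients: the identification of the Lawson associated family with helicoidal surfaces, the energy formula of Theorem~\ref{mainthm:willmore}, and a lower bound on that energy which grows along the family.

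\textbf{Parametrizing the family.} Fix a spherical catenoid, i.e.\ a value $c_0\in(0,1/2)$ with $h=0$. By \cite[Theorem~5.6]{Castro2024}, each member of its Lawson associated family, indexed by an angle $\theta$, is congruent to some $\operatorname{Hel}_{c}^{h}$. I expect the correspondence to take the form $\theta\mapsto(h(\theta),c(\theta))$, with $h$ continuous and monotone in $\theta$ and $c(\theta)$ staying in a compact subinterval of $(0,1/2)$ away from the endpoints where $h\to\infty$. I would make this explicit first.

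\textbf{Restricting to compact members.} By Theorem~\ref{mainthm:compactness}, a compact member has $h=j/\nu$ in lowest terms and $q(h,c)=m/n$ in lowest terms.

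\textbf{Energy lower bound.} By Theorem~\ref{thm:total_energy_lattice_index}, the Willmore energy of $Q_c^h$ is an integer multiple $N\,\mathcal E(h,c)$. The multiple $N$ should be the index of the lattice in the automorphism group, built from the periods in $t$ and $s$. I expect $N$ to grow at least linearly in the denominators $\nu$ and $n$, since one needs $\nu$ turns in $t$ and $n$ height periods before closing. Meanwhile $\mathcal E(h,c)$, as evaluated by elliptic integrals in Theorem~\ref{thm:parameter_rectangle_integral}, depends continuously on $(h,c)$ and is positive. Hence on the parameter range $\mathcal E$ is bounded below by some $\varepsilon>0$. If needed I would argue this by compactness of the $c$-range together with monotone behaviour in $h$; the integral over the fixed rectangle only increases with $h$, because the area density contains $h^2$-terms. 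It follows that energy at most $W_0$ forces $\nu$ and $n$ to be at most $W_0/\varepsilon$.

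\textbf{Finiteness.} The curve $\theta\mapsto(h,q)$ is a continuous graph over a compact $\theta$-interval, or over $h$ itself. This holds provided that the energy also bounds $h$: I would show $\mathcal E(h,c)\gtrsim h$, or that $N\ge j$. Then only finitely many pairs of rationals with bounded denominators and bounded size exist. Each such pair corresponds to finitely many $\theta$, because $q(h(\theta),c(\theta))$ is real-analytic and non-constant along the family. It is non-constant because otherwise the range result behind Corollary~\ref{cor:infinitely_many_compact} would fail.

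\textbf{Main obstacle.} The hardest step is the uniform lower bound: controlling both $\mathcal E(h,c)$ and the index $N$ in terms of $\nu$, $n$ and $h$, near the boundary of the associated family where $h\to\infty$ or $\theta\to0$. I would try first to get $N\ge \nu n$ directly from the lattice description, and $\mathcal E\ge$ area of a fixed piece. For the latter I would use the elementary bound $W\ge \mathrm{Area}$ for minimal surfaces, where $W$ is the integral of $1+H^2$ and $H=0$.
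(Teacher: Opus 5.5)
\textbf{The gap is in your final step}, where you go from finitely many admissible arithmetic data to finitely many $\theta$. You claim that each pair $(h,q)$ is attained at only finitely many $\theta$ because $\theta\mapsto q(h(\theta),c(\theta))$ is real-analytic and non-constant. Neither part of this argument works.
\begin{itemize}
\item Non-constancy does not follow from Lemma~\ref{lem:q_range} or Corollary~\ref{cor:infinitely_many_compact}. Those results concern $c\mapsto q(h_0,c)$ with $h_0$ fixed, whereas along the associated family $h$ and $c$ vary together.
\item More seriously, even granting analyticity and non-constancy, a real-analytic function on an open interval has discrete level sets, not finite ones. They can accumulate at $\theta\to0,\pi/2,\pi$, and you have no control of $q$ at those ends.
\end{itemize}
What you are missing is that $q$ plays no role in this step. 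With $\vartheta(\theta)=\arcsin(\sin\beta\sin\theta)$, one has $h(\theta)=\tan(\vartheta/2)$ on $(0,\pi/2)$ and $h(\theta)=\cot(\vartheta/2)$ on $(\pi/2,\pi)$, so $h$ is strictly increasing on each interval. Once only finitely many pitches $h=j/\nu$ can occur, each one determines at most one $\theta$ per interval. The three values $\theta=0,\pi/2,\pi$ are finitely many anyway. This is how the paper concludes; you anticipated monotonicity of $h$ in your first step but never used it.

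\textbf{The two bounds you need, on $N$ and on $h$, are left as expectations, and several of your supporting claims are false.}
\begin{itemize}
\item $c(\theta)$ does not stay in a compact subset of $(0,1/2)$: $c(\theta)\to0$ as $\theta\to\pi/2$, and $h(\theta)\to\infty$ as $\theta\to\pi^-$. So continuity and positivity of $\mathcal E$ alone give no uniform lower bound.
\item $N\ge\nu n$ and $N\ge j$ fail in general, since $N=\operatorname{lcm}(n,\nu)$ in your notation. Only $N\ge\max(n,\nu)$ holds, but that suffices for bounding the denominators.
\end{itemize}
The paper gets both bounds at once from the exact scaling law $\mathcal E(h(\theta),c(\theta))=C(\beta)\sqrt{1+h(\theta)^2}$ of Corollary~\ref{cor:energy_along_family}, with $C(\beta)=\mathcal E(0,\tfrac12\sin\beta)>4\pi$. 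This gives $N<W_0/(4\pi)$ and $\sqrt{1+h^2}\le W_0/(N\,C(\beta))$.

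Your direct-estimate idea also works, and is even independent of the family. In \eqref{eq:parameter_integral_theta}, write $A-B\sin^2\theta=\bigl(\tfrac{1+\delta}{2}-\delta\sin^2\theta\bigr)+h^2\bigl(\tfrac{1-\delta}{2}+\delta\sin^2\theta\bigr)$. The first bracket is at least $\tfrac12$ when the integration variable lies in $[0,\pi/4]$, and the second is at least $\tfrac12$ on $[\pi/4,\pi/2]$. This gives $\mathcal E(h,c)\ge\frac{\pi^2}{\sqrt2}(1+h)$ for all $h\ge0$ and $0<c<1/2$. Note that monotonicity in $h$ comes from this integrated form: the density $\alpha$ is not pointwise increasing in $h$, and the period $L$ itself depends on $h$.
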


The paper is organized as follows. Section~\ref{sec:global_immersions}
recalls the local reconstruction equations, constructs the global helicoidal
minimal immersions $X_c^h$, and computes the longitude increment of their
profile curves. Section~\ref{sec:compactness} introduces the normalized
increment $q(h,c)$, determines the local symmetries of the parametrizations,
and uses a developing-map argument to prove the compactness criterion and study
the range of $q(h,c)$.
Section~\ref{sec:compact_quotients} determines the automorphism groups
and describes the compact quotient surfaces arising from the parametrizations
in terms of finite-index subgroups. In Section~\ref{sec:willmore_energy}, the
integral of the induced area measure over a parameter rectangle is evaluated,
formulas for the Willmore energies of the quotient surfaces by the
automorphism groups are derived, and the parameter dependence needed for the subsequent
finiteness argument is analyzed. Finally,
Section~\ref{sec:associated_families} applies these results to Lawson
associated families and proves the finiteness theorem under a prescribed
Willmore energy bound, Theorem~\ref{thm:finiteness}.

\section{Global Helicoidal Minimal Immersions}
\label{sec:global_immersions}

This section recalls the local helicoidal parametrization and the
reconstruction equations of Castro et al.~\cite{Castro2024}. For
$0<c<1/2$, we reconstruct the profile curve globally, compute its longitude
increment, and define the resulting helicoidal immersion. The case $c=0$ is
described separately by the explicit Lawson parametrization.

Throughout the paper, we adopt the normalization of Castro et al., in which
the angular speed in the second complex coordinate is normalized to one and
the pitch $h$ is the coefficient of $t$ in the first complex coordinate.

\subsection{Local helicoidal parametrizations}

We first recall how a local helicoidal parametrization is obtained from
a given unit-speed profile curve. The profile curves relevant to the
present family will be reconstructed globally in the next subsection.

We identify
\[
\mathbb S^3
=
\{(x_1,x_2,x_3,x_4)\in\mathbb R^4:
x_1^2+x_2^2+x_3^2+x_4^2=1\}
=
\{(\omega_1,\omega_2)\in\mathbb C^2:
|\omega_1|^2+|\omega_2|^2=1\}.
\]
The axis is the great circle
\[
c_0=\{(x_1,x_2,0,0)\in\mathbb S^3\}.
\]
In this normalization, a helicoidal surface with pitch $h\geq0$ is locally
written as
\[
X^h(\xi)(s,t)
=
\bigl(
x(s)\cos(ht)-y(s)\sin(ht),
x(s)\sin(ht)+y(s)\cos(ht),
z(s)\cos t,
z(s)\sin t
\bigr),
\]
where
\[
\xi(s)=(x(s),y(s),z(s),0)
\]
is a unit-speed curve in
\[
\mathbb S^2_+
=
\{(x_1,x_2,x_3,0)\in\mathbb S^3:x_3>0\}.
\]
On any subinterval on which $z(s)<1$, the same profile curve may be
written in latitude--longitude coordinates as
\[
\xi(s)
=
\bigl(
\cos\varphi(s)\cos\lambda(s),
\cos\varphi(s)\sin\lambda(s),
\sin\varphi(s),
0
\bigr).
\]
The corresponding helicoidal parametrization is
\cite[Eq.~(2.4)]{Castro2024}
\begin{equation}\label{eq:castro_local_helicoidal_parametrization}
X^h(\xi)(s,t)
=
\bigl(
\cos\varphi(s)e^{i(ht+\lambda(s))},
\sin\varphi(s)e^{it}
\bigr).
\end{equation}
Here
\[
z(s)=\sin\varphi(s),
\qquad
0<\varphi(s)<\frac{\pi}{2}.
\]
Thus $\varphi(s)$ is the geodesic distance in $\mathbb S^2$ from
$\xi(s)$ to the great circle $c_0$, while $z(s)$ is the sine of this
distance. 

For a unit-speed spherical curve
$\xi(s)=(x(s),y(s),z(s))$, with the fourth coordinate suppressed and
dots denoting derivatives with respect to $s$, its spherical angular
momentum with respect to $c_0$ is \cite[Eq.~(3.2)]{Castro2024}
\begin{equation}\label{eq:spherical_angular_momentum}
\mathcal K(s)
=
-\det(\xi(s),\dot\xi(s),\mathbf e_3)
=
\dot x(s)y(s)-x(s)\dot y(s),
\qquad
\mathbf e_3=(0,0,1).
\end{equation}
Substituting the latitude--longitude expression for $\xi$ into
\eqref{eq:spherical_angular_momentum} gives
\begin{equation}\label{eq:angular_momentum_longitude}
\mathcal K(s)
=
-\bigl(1-z(s)^2\bigr)\dot\lambda(s).
\end{equation}

By \cite[Theorem~3.1]{Castro2024}, if $z(s)$ is nonconstant
and the angular momentum along the profile curve can be expressed as a function
of $z(s)$, then this function determines the profile curve uniquely up to
rotations about the $z$-axis. More precisely, the proof gives the relations
\begin{equation}\label{eq:Castro_reconstruction}
\dot z^{\,2}
=
1-z^2-\mathcal K(z)^2,
\qquad
\dot\lambda
=
\frac{\mathcal K(z)}{z^2-1}.
\end{equation}
For the minimal helicoidal surfaces with $0<c<1/2$,
Castro et al.'s Corollary~5.1 gives the spherical angular momentum
\begin{equation}\label{eq:Kc}
\mathcal K_c^h(z)
=
-\frac{c\sqrt{h^2+(1-h^2)z^2}}{\sqrt{z^4+h^2c^2}},
\qquad
h\geq0,\quad 0<c<\frac12.
\end{equation}
With the sign chosen in \eqref{eq:Kc},
$\mathcal K_c^h(z)<0$ for every value of $z$ attained by the profile.
We also record the induced metric of the surface, which will be used below to verify
that the global parametrization remains an immersion at the critical points
of the height function and later to compute its area density. By
\cite[Eqs.~(4.1) and~(4.3)]{Castro2024}, it is
\begin{equation}\label{eq:local_induced_metric}
    g_{11}=1,\qquad
    g_{12}=-h\mathcal K,\qquad
    g_{22}=h^2+(1-h^2)z^2,
\end{equation}
with $\mathcal K=\mathcal K_c^h(z)$ as in \eqref{eq:Kc}, $(s,t)$ the ordered coordinates.

For $c=0$, the height function attains its minimum value $z=0$, where
the latitude--longitude coordinates degenerate as the profile curve
intersects the rotation axis. We therefore restrict the reconstruction below
to $0<c<1/2$ and treat $c=0$ separately using the Lawson parametrization.

\subsection{Global reconstruction of the profile curve}

Assume throughout this subsection that $0<c<1/2$. The two reconstruction
equations in \eqref{eq:Castro_reconstruction} will be used successively.
The first is used to construct the height function $z(s)$ as a smooth periodic
function. Once the height function has been obtained globally, the second
determines the longitude $\lambda(s)$ up to an additive constant. We then construct the
global unit-speed profile curve and compute the longitude increment over one
period of the height function.

For the spherical angular momentum $\mathcal K_c^h(z)$ defined in \eqref{eq:Kc}, set
\begin{equation}\label{eq:Fhc_definition}
F_{c}^{h}(z)
:=
1-z^2-\bigl(\mathcal K_c^h(z)\bigr)^2.
\end{equation}
Substituting $\mathcal K=\mathcal K_c^h$ into the first reconstruction
equation in \eqref{eq:Castro_reconstruction}, we obtain
\[
\dot z^{\,2}=F_c^h(z).
\]
The factorization below identifies the minimum and maximum values of the
height function. We then construct the increasing and decreasing inverse
branches, reflect them at their endpoints, and extend the resulting
function periodically to obtain a smooth solution $z(s)$ on $s\in\mathbb R$.

\begin{lem}\label{lem:height_range_factorization}
Let $h\geq0$ and $0<c<1/2$, and set
\begin{equation}\label{eq:height_extrema}
z_{\min}
=
\sqrt{\frac{1-\sqrt{1-4c^2}}{2}},
\qquad
z_{\max}
=
\sqrt{\frac{1+\sqrt{1-4c^2}}{2}}.
\end{equation}
For $z>0$,
\begin{equation}\label{eq:Fhc_factorization}
F_{c}^{h}(z)
=
\frac{z^2(z^2-z_{\min}^2)(z_{\max}^2-z^2)}
     {z^4+h^2c^2}.
\end{equation}
Consequently, $z_{\min}$ and $z_{\max}$ are the only positive zeros
of $F_{c}^{h}$, are simple and independent of $h$, and satisfy
$0<z_{\min}<z_{\max}<1$. Moreover,
\[
F_{c}^{h}>0
\quad\text{on }(z_{\min},z_{\max}),
\qquad
\frac{1}{\sqrt{F_{c}^{h}}}
\in L^1([z_{\min},z_{\max}]).
\]
\end{lem}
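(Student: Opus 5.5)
The plan is to verify the factorization \eqref{eq:Fhc_factorization} by direct algebra, read off the zeros and signs from it, and then check integrability through the local behaviour of $F_c^h$ at its two endpoint zeros.

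\textbf{Factorization.} First note that $z^4+h^2c^2>0$ for every $z>0$, since $c>0$. Substituting \eqref{eq:Kc} into \eqref{eq:Fhc_definition} and putting everything over this common denominator gives the numerator
\[
(1-z^2)(z^4+h^2c^2)-c^2\bigl(h^2+(1-h^2)z^2\bigr).
\]
On expanding, the $h^2c^2$ terms and the $h^2c^2z^2$ terms cancel. This is why the result will be independent of $h$ apart from the denominator. What remains is
\[
z^4-z^6-c^2z^2=-z^2\bigl(z^4-z^2+c^2\bigr).
\]
Put $u=z^2$. The quadratic $u^2-u+c^2$ has roots $u_\pm=\tfrac12\bigl(1\pm\sqrt{1-4c^2}\bigr)$, and these are real and distinct because $0<c<1/2$. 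By \eqref{eq:height_extrema} these roots are exactly $z_{\min}^2$ and $z_{\max}^2$. Therefore
\[
-(u^2-u+c^2)=(u-z_{\min}^2)(z_{\max}^2-u),
\]
which is \eqref{eq:Fhc_factorization}.

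\textbf{Zeros and sign.} Since $0<c<1/2$, we have $0<\sqrt{1-4c^2}<1$, so $0<u_-<u_+<1$, and hence $0<z_{\min}<z_{\max}<1$. For $z>0$ the factors $z^2$ and $z^4+h^2c^2$ are positive. The positive zeros of $F_c^h$ are therefore exactly $z_{\min}$ and $z_{\max}$, and neither depends on $h$. Each zero is simple, because the factor $z^2-z_{\min}^2=(z-z_{\min})(z+z_{\min})$ vanishes to first order at $z_{\min}$, and similarly for $z_{\max}$. For the sign, on $(z_{\min},z_{\max})$ both $z^2-z_{\min}^2$ and $z_{\max}^2-z^2$ are positive, so $F_c^h>0$ there.

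\textbf{Integrability.} On the closed interval $[z_{\min},z_{\max}]$, write
\[
F_c^h(z)=(z-z_{\min})(z_{\max}-z)\,G(z),
\qquad
G(z)=\frac{z^2(z+z_{\min})(z+z_{\max})}{z^4+h^2c^2}.
\]
The function $G$ is continuous and strictly positive on this compact interval, so it is bounded below by some $m>0$. Consequently
\[
\frac{1}{\sqrt{F_c^h(z)}}\le m^{-1/2}\,(z-z_{\min})^{-1/2}(z_{\max}-z)^{-1/2}.
\]
The right-hand side is integrable: its integral is the Beta integral, which equals $\pi\,m^{-1/2}$.

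\textbf{Main obstacle.} The only real risk is bookkeeping in the numerator expansion, specifically confirming that the $h$-dependent terms cancel. The remaining steps are routine.
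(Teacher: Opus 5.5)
Your proof is correct and follows essentially the paper's route: clear the common denominator $z^4+h^2c^2$, observe that the $h$-dependent terms cancel to leave $-z^2(z^4-z^2+c^2)$, factor this quadratic in $z^2$, and read off the zeros, the sign, and integrability; your explicit Beta-integral bound spells out the paper's one-line ``simple zeros imply $L^1$'' step. One minor point: positivity of $z^4+h^2c^2$ follows from $z>0$, not from $c>0$, since the second term vanishes when $h=0$.
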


\begin{proof}
Substituting \eqref{eq:Kc} into $F_{c}^{h}(z)=0$ gives
\[
1-z^2-\frac{c^2\bigl(h^2+(1-h^2)z^2\bigr)}
{z^4+h^2c^2}
=0.
\]
After clearing the denominator and multiplying by $-1$, the equation is
equivalent to
\[
z^2(z^4-z^2+c^2)=0.
\]
Since the profile lies in $\mathbb S^2_+$, we have $z>0$, and hence
the factor $z^2$ may be discarded. Thus
\[
F_{c}^{h}(z)=0
\quad\Longleftrightarrow\quad
z^4-z^2+c^2=0.
\]
Solving this quadratic equation in $z^2$ gives
\[
z^2=\frac{1\pm\sqrt{1-4c^2}}{2}.
\]
Therefore the two positive zeros of $F_{c}^{h}$ are
\[
z_{\min}
 =\sqrt{\frac{1-\sqrt{1-4c^2}}{2}},
\qquad
z_{\max}
 =\sqrt{\frac{1+\sqrt{1-4c^2}}{2}},
\]
as recorded in \eqref{eq:height_extrema}.

Using
\[
z^4-z^2+c^2=(z^2-z_{\min}^2)(z^2-z_{\max}^2),
\]
we obtain the factorization \eqref{eq:Fhc_factorization}.
Since the denominator is strictly positive on
$[z_{\min},z_{\max}]$, the factorization shows that
$F_{c}^{h}>0$ on $(z_{\min},z_{\max})$ and that
$z_{\min}$ and $z_{\max}$ are simple zeros. Consequently,
\[
\frac{1}{\sqrt{F_{c}^{h}}}\in
L^1\bigl([z_{\min},z_{\max}]\bigr).
\]
\end{proof}

To construct a smooth periodic solution by reflecting the inverse branches
at the endpoint values, we use the following elementary extension lemma.

\begin{lem}\label{lem:smooth_even_extension}
Let $F$ be smooth near $a$, and assume that on a right-hand neighborhood
of $a$ one has $F(u)\geq0$ and
$F(u)=(u-a)G(u)$ for a smooth function $G$ with $G(a)>0$. If $u(s)\geq a$ is defined for
$s\geq0$ by
\[
    s=\int_a^{u(s)}\frac{\dif v}{\sqrt{F(v)}},
\]
then, after possibly shrinking the interval, there is a smooth function
$\psi\geq a$ with $\psi(0)=a$ such that $u(s)=\psi(s^2)$. In particular, the
branch extends smoothly and evenly across $s=0$.

Similarly, let $F$ be smooth near $b$, and assume that on a left-hand
neighborhood of $b$ one has $F(u)\geq0$ and
$F(u)=(b-u)G(u)$ for a smooth function $G$ with $G(b)>0$. If $u(s)\leq b$ is defined for
$s\leq0$ by
\[
    -s=\int_{u(s)}^b\frac{\dif v}{\sqrt{F(v)}},
\]
then there is a smooth function $\psi\leq b$ with $\psi(0)=b$ such that
$u(s)=\psi(s^2)$. Equivalently, one may write
$u(s)=b-\eta(s^2)$, where $\eta\geq 0$ is smooth and $\eta(0)=0$.
\end{lem}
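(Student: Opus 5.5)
We treat only the first (left-endpoint) case. The second case follows from it by the substitution $u\mapsto 2a-u$ with $a=b$, or simply by running the same argument with $b-u$ in place of $u-a$.

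\emph{Step 1: rewrite the integral.} Set $w=u-a\ge0$ and $H(w)=G(a+w)$. Then $H$ is smooth and $H(0)>0$, so on a small interval $H>0$ and $H^{-1/2}$ is smooth. Substitute $v=a+r^2$ with $r\ge0$, so that $\dif v=2r\,\dif r$ and $\sqrt{F(v)}=r\sqrt{H(r^2)}$. The defining relation becomes
\[
s=\int_0^{\sqrt{u(s)-a}}\frac{2\,\dif r}{\sqrt{H(r^2)}}=:\Phi\bigl(\sqrt{u(s)-a}\bigr).
\]
Because the integrand is a smooth function of $r$ (indeed even in $r$), the function $\Phi$ extends to a smooth function on a neighborhood of $r=0$. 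Its derivative satisfies $\Phi'(0)=2/\sqrt{H(0)}>0$. Moreover $\Phi$ is odd, since its integrand is even and $\Phi(0)=0$.

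\emph{Step 2: invert and square.} By the inverse function theorem, $\Phi$ has a smooth inverse $\rho$ near $0$. This $\rho$ is odd, with $\rho(0)=0$ and $\rho'(0)=\sqrt{H(0)}/2>0$. For $s\ge0$ small we have $\sqrt{u(s)-a}=\rho(s)$, because both sides are $\ge0$ and $\Phi$ is injective. Hence $u(s)=a+\rho(s)^2$. The function $\rho^2$ is smooth and even.

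Now apply the classical fact (Whitney) that a smooth even function $f$ can be written as $f(s)=\psi_0(s^2)$ with $\psi_0$ smooth. To avoid invoking Whitney, use that $\rho$ is odd: write $\rho(s)=s\,\sigma(s)$, where $\sigma(s)=\int_0^1\rho'(\tau s)\,\dif\tau$ is smooth and even. This only pushes the problem to the even function $\sigma^2$, so citing Whitney's theorem is cleaner. Then set $\psi(x)=a+\psi_0(x)$, where $\rho(s)^2=\psi_0(s^2)$.

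We have $\psi(0)=a$. For $x\ge0$, $\psi(x)=a+\rho(\sqrt x)^2\ge a$. On the small interval we may restrict $\psi$ to $x\ge0$, or shrink so that $\psi\ge a$ holds wherever it is used. The function $s\mapsto\psi(s^2)$ is then a smooth even extension of $u$ across $s=0$.

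\emph{Step 3: the right endpoint.} For the case at $b$, put $w=b-u$ and $H(w)=G(b-w)$. The same computation gives $b-u(s)=\rho(-s)^2=\rho(s)^2$ for $s\le0$, which yields $\eta=\psi_0$ and $\psi=b-\eta$.

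\emph{Main obstacle.} The only delicate point is the smoothness of the even function $\rho^2$ as a function of $s^2$, that is, Whitney's even-function theorem. Everything else is the inverse function theorem applied after the substitution $v=a+r^2$, which removes the singularity of the integrand.
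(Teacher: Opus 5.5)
Your proof is correct and follows essentially the same route as the paper: the substitution $v=a+r^2$ (the paper's $y=\sqrt{u-a}$), the inverse function theorem applied to the resulting odd map, and squaring the odd inverse. The only difference is that you cite Whitney's even-function theorem explicitly for $\rho^2$, whereas the paper uses an equivalent fact without comment when it writes the odd inverse as $y=sJ(s^2)$. Both can in fact be bypassed: $\Phi(y)=y\,\Theta(y^2)$ with $\Theta(x)=2\int_0^1 H(x\tau^2)^{-1/2}\,\dif\tau$ smooth near $0$ and $\Theta(0)>0$, so $s^2=(u-a)\,\Theta(u-a)^2$, and inverting the local diffeomorphism $x\mapsto x\,\Theta(x)^2$ gives $u-a$ directly as a smooth function of $s^2$.
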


\begin{proof}
For the lower endpoint $a$, put $y=\sqrt{u-a}$.  Then
\[
    s=2\int_0^y \frac{\dif r}{\sqrt{G(a+r^2)}}=yH(y^2),
\]
where $H$ is smooth and $H(0)=2/\sqrt{G(a)}>0$.  The map
$y\mapsto yH(y^2)$ has nonzero derivative at $0$, so it has a smooth
local inverse. Since $yH(y^2)$ is odd, its local inverse is also odd and has
the form $y=sJ(s^2)$, with $J$ smooth. Hence
$u-a=y^2=s^2J(s^2)^2$, which gives the asserted $\psi$.

For the upper endpoint $b$, put $y=\sqrt{b-u}$.  Then
\[
    -s=2\int_0^y\frac{\dif r}{\sqrt{G(b-r^2)}}=y\widetilde H(y^2),
\]
with $\widetilde H(0)=2/\sqrt{G(b)}>0$. The same argument, using the
oddness of the local inverse, gives $y=(-s)\widetilde J(s^2)$, and hence
\[
    b-u=s^2\widetilde J(s^2)^2.
\]
This proves the asserted form.
\end{proof}

By Lemma~\ref{lem:height_range_factorization}, together with
\eqref{eq:height_extrema} and \eqref{eq:Fhc_factorization}, $F_{c}^{h}>0$ on
$(z_{\min},z_{\max})$, and its only zeros on
$[z_{\min},z_{\max}]$ are the two endpoints.
Set
\begin{equation}\label{eq:half_and_full_height_periods}
\ell
=
\int_{z_{\min}}^{z_{\max}}
\frac{\dif u}{\sqrt{F_{c}^{h}(u)}},
\qquad
L=2\ell.
\end{equation}
The finiteness of $\ell$ follows from
Lemma~\ref{lem:height_range_factorization}.

On the increasing branch, define $z:[0,\ell]\to[z_{\min},z_{\max}]$ implicitly by
\begin{equation}\label{eq:z_first_branch}
s=
\int_{z_{\min}}^{z(s)}
\frac{\dif u}{\sqrt{F_{c}^{h}(u)}},
\qquad
0\le s\le \ell.
\end{equation}
Then $z(0)=z_{\min}$, $z(\ell)=z_{\max}$, and
\[
\dot z=\sqrt{F_{c}^{h}(z)}>0
\qquad\text{on }(0,\ell).
\]
On the decreasing branch, define $z:[\ell,L]\to[z_{\min},z_{\max}]$ by
\begin{equation}\label{eq:z_second_branch}
s-\ell
=
\int_{z(s)}^{z_{\max}}
\frac{\dif u}{\sqrt{F_{c}^{h}(u)}},
\qquad
\ell\le s\le L.
\end{equation}
Then $z(\ell)=z_{\max}$, $z(L)=z_{\min}$, and
\[
\dot z=-\sqrt{F_{c}^{h}(z)}<0
\qquad\text{on }(\ell,L).
\]

Since the zeros of $F_c^h$ at $z_{\min}$ and $z_{\max}$ are simple,
Lemma~\ref{lem:smooth_even_extension} shows that the two branches join
smoothly at $s=\ell$ and extend evenly across the endpoint values. By the construction in
\eqref{eq:z_first_branch}--\eqref{eq:z_second_branch}, we have the symmetry
\begin{equation}\label{eq:z_symmetry}
z(\ell+s)=z(\ell-s)\quad (|s|\leq\ell),
\qquad
z(L-s)=z(s)\quad (0\leq s\leq L).
\end{equation}
This, together with the smooth even extensions
at $s=0$ and $s=L$, shows that these extensions agree after translation
by $L$. Hence periodic repetition defines a smooth global height function
\begin{equation}\label{eq:z_periodic_global}
\begin{array}{cc}
 z_c^h:\mathbb R\longrightarrow[z_{\min},z_{\max}] & \text{satisfying} \\
  z_c^h(s+L)=z_c^h(s), & \bigl(\dot z_c^h(s)\bigr)^2=F_c^h\bigl(z_c^h(s)\bigr) 
\end{array}
\end{equation}
for all $s\in\mathbb R$.
Its critical points are precisely $s=k\ell$, $k\in\mathbb Z$. More
explicitly,
\[
z_c^h(2k\ell)=z_{\min},
\qquad
z_c^h((2k+1)\ell)=z_{\max},
\]
so $s=2k\ell$ are minimum points, and
$s=(2k+1)\ell$ are maximum points. Moreover, $z_c^h(s)$ is strictly increasing on $[2k\ell, (2k+1)\ell]$ and strictly decreasing on $[(2k+1)\ell, (2k+2)\ell]$ for $k\in\mathbb Z$.
Define the global latitude function by
\begin{equation}\label{eq:varphi_definition}
\varphi_c^h(s)=\arcsin z_c^h(s), \quad s\in\mathbb{R}.
\end{equation}
Since $[z_{\min},z_{\max}]\subset(0,1)$, the function
$\varphi_c^h$ is smooth and $L$-periodic.

With the global height function fixed, the second reconstruction equation in
\eqref{eq:Castro_reconstruction} determines the longitude up to an additive
constant. We fix this constant by defining the longitude to be 
\begin{equation}\label{eq:lambda_integral}
\lambda_c^h(s)
=
\int_0^s
\frac{\mathcal K_c^h(z_c^h(r))}{z_c^h(r)^2-1}\,\dif r.
\end{equation}
Since
\[
0<z_{\min}\le z_c^h(r)\le z_{\max}<1,
\]
the integrand is smooth on $\mathbb R$. Hence
$\lambda_c^h\in C^\infty(\mathbb R)$,
$\lambda_c^h(0)=0$, and
\begin{equation}\label{eq:lambda_derivative}
\dot\lambda_c^h(s)
=
\frac{\mathcal K_c^h(z_c^h(s))}{z_c^h(s)^2-1}
>0.
\end{equation}
Thus $\lambda_c^h$ is strictly increasing on $\mathbb R$.
Every other solution of the longitude equation differs from
$\lambda_c^h$ by an additive constant. Replacing $\lambda_c^h$ by
$\lambda_c^h+a$ amounts to composing the resulting profile curve with the
ambient isometry
\[
(\omega_1,\omega_2)\longmapsto(e^{ia}\omega_1,\omega_2).
\]
Thus the resulting surface is determined up to congruence.

Define the global profile curve
\begin{equation}\label{eq:global_profile_curve}
\begin{aligned}
\xi_c^h:\mathbb R&\longrightarrow\mathbb S^2_+,\\
\xi_c^h(s)
&=
\bigl(
\cos\varphi_c^h(s)\cos\lambda_c^h(s),
\cos\varphi_c^h(s)\sin\lambda_c^h(s),
\sin\varphi_c^h(s),
0
\bigr).
\end{aligned}
\end{equation}
The curve $\xi_c^h$ is smooth on $\mathbb R$ and has unit speed. Indeed,
using \eqref{eq:z_periodic_global} and \eqref{eq:lambda_derivative}, we find
\[
\begin{aligned}
\lvert\dot\xi_c^h(s)\rvert^2
&=
\frac{\dot z_c^h(s)^2}{1-z_c^h(s)^2}
+
\bigl(1-z_c^h(s)^2\bigr)\dot\lambda_c^h(s)^2\\
&=
\frac{
1-z_c^h(s)^2-\bigl(\mathcal K_c^h(z_c^h(s))\bigr)^2
+\bigl(\mathcal K_c^h(z_c^h(s))\bigr)^2
}{1-z_c^h(s)^2}\\
&=1.
\end{aligned}
\]
Thus \eqref{eq:global_profile_curve} is a globally defined unit-speed
profile curve.

Although $z_c^h$ and $\varphi_c^h$ are $L$-periodic, the longitude
$\lambda_c^h$ is strictly increasing on $\mathbb R$. Its increment over one period of the height function is the quantity
entering the closing condition in Section~\ref{sec:compactness}.

\begin{lem}\label{lem:longitude_increment}
Let $h\geq0$ and $0<c<1/2$. Let $\lambda=\lambda_c^h$ be defined by
\eqref{eq:lambda_integral}, and let $L=2\ell$ be defined by
\eqref{eq:half_and_full_height_periods}. Then
$\lambda(s+L)-\lambda(s)$ is independent of $s$, and
\begin{equation}\label{eq:longitude_drift}
\lambda(s+L)-\lambda(s)
=
2\int_{z_{\min}}^{z_{\max}}
\frac{-\mathcal K_c^h(z)}
     {(1-z^2)\sqrt{F_c^h(z)}}\,\dif z
\qquad
\text{for all }s\in\mathbb R,
\end{equation}
where $\mathcal K_c^h$, $F_c^h$, and $z_{\min},z_{\max}$ are defined
in \eqref{eq:Kc}, \eqref{eq:Fhc_definition}, and
\eqref{eq:height_extrema}, respectively.
\end{lem}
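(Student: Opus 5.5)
The argument splits into two parts: independence of $s$, which comes from periodicity, and evaluation of the increment by a change of variables on each monotone branch.

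\emph{Independence of $s$.} The integrand of \eqref{eq:lambda_integral} is
\[
f(r)=\frac{\mathcal K_c^h(z_c^h(r))}{z_c^h(r)^2-1}.
\]
It is a smooth function of $z_c^h(r)$ alone. By \eqref{eq:z_periodic_global}, $z_c^h$ is $L$-periodic, so $f$ is $L$-periodic as well. Differentiating $\lambda(s+L)-\lambda(s)$ gives $f(s+L)-f(s)=0$. Hence the difference is constant, and it equals
\[
\lambda(L)-\lambda(0)=\int_0^L f(r)\,\dif r .
\]

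\emph{Evaluation on the two branches.} I split $\int_0^L$ into $\int_0^\ell+\int_\ell^L$ and change variables $u=z_c^h(r)$ on each piece.

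On $(0,\ell)$, $z$ is strictly increasing with $\dot z=\sqrt{F_c^h(z)}$, by \eqref{eq:z_first_branch}. So $\dif r=\dif u/\sqrt{F_c^h(u)}$ and
\[
\int_0^\ell f\,\dif r=\int_{z_{\min}}^{z_{\max}}\frac{\mathcal K_c^h(u)}{(u^2-1)\sqrt{F_c^h(u)}}\,\dif u .
\]
On $(\ell,L)$, $\dot z=-\sqrt{F_c^h(z)}$ by \eqref{eq:z_second_branch}, and $u$ runs from $z_{\max}$ down to $z_{\min}$. The two sign reversals cancel, so this piece gives the same integral.

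Writing $\mathcal K/(u^2-1)=-\mathcal K/(1-u^2)$ and adding the two equal pieces yields \eqref{eq:longitude_drift}. Alternatively, the symmetry \eqref{eq:z_symmetry} shows directly that the two halves agree.

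\emph{Justifying the substitution.} The only delicate point is that $\dot z$ vanishes at the endpoints $s=0,\ell,L$, so the substituted integrand $1/\sqrt{F_c^h}$ is singular there. I would handle this by performing the substitution on $[\varepsilon,\ell-\varepsilon]$, where $z$ is a diffeomorphism, and then letting $\varepsilon\to0$. On the $r$-side, $f$ is continuous, so the integrals converge. On the $u$-side:
\begin{itemize}
\item $-\mathcal K_c^h/(1-u^2)$ is bounded on $[z_{\min},z_{\max}]$, since $z_{\max}<1$.
\item $1/\sqrt{F_c^h}\in L^1$ by Lemma~\ref{lem:height_range_factorization}.
\end{itemize}
Dominated (or monotone) convergence then applies, since the integrand is positive by \eqref{eq:lambda_derivative}. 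The same argument handles the second branch.
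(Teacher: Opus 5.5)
Your proof is correct and follows the same route as the paper: $L$-periodicity of $\dot\lambda$ gives independence of $s$, and the value at $s=0$ comes from splitting $\int_0^L$ at $\ell$ and reducing both halves to the same $z$-integral, either via the two branch substitutions or via the symmetry \eqref{eq:z_symmetry}. Your $\varepsilon$-truncation argument for the endpoint singularity of $1/\sqrt{F_c^h}$ only makes explicit a step the paper leaves implicit.
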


\begin{proof}
By \eqref{eq:lambda_derivative} and the $L$-periodicity of $z_c^h$,
the derivative $\dot\lambda$ is $L$-periodic; hence
$\lambda(s+L)-\lambda(s)$ is independent of $s$. Taking $s=0$,
splitting the integral \eqref{eq:lambda_integral} for $\lambda(L)$ at $\ell$, and using the symmetry of $z_c^h$ \eqref{eq:z_symmetry}, we get
\eqref{eq:longitude_drift}.
\end{proof}

\subsection{Global immersions and minimality}

\begin{defn}
\label{def:global_parametrizations}
Let $h\geq0$ and $0\le c<1/2$. We define the global helicoidal parametrization
$X_c^h$ and the corresponding helicoidal surface
$\operatorname{Hel}_c^h$ as follows.

Assume first that $0<c<1/2$. Substituting the profile curve
\eqref{eq:global_profile_curve} into the helicoidal construction
\eqref{eq:castro_local_helicoidal_parametrization}, define
\begin{equation}\label{eq:global_positive_c_parametrization}
\begin{aligned}
X_c^h:=X^h(\xi_c^h):\mathbb R^2&\longrightarrow\mathbb S^3,\\
X_c^h(s,t)
&=
\bigl(
\cos\varphi_c^h(s)e^{i(ht+\lambda_c^h(s))},
\sin\varphi_c^h(s)e^{it}
\bigr).
\end{aligned}
\end{equation}
We write $\operatorname{Hel}_c^h$ for the helicoidal surface represented by
$X_c^h$.

For $c=0$ and $h>0$, define
\begin{equation}\label{eq:global_lawson_parametrization}
\begin{aligned}
X_0^h:\mathbb R^2&\longrightarrow\mathbb S^3,\\
X_0^h(s,t)
&=
\bigl(
\cos s\,e^{iht},
\sin s\,e^{it}
\bigr),
\end{aligned}
\end{equation}
and write $\operatorname{Hel}_0^h$ for the Lawson spherical helicoid
represented by $X_0^h$.

At $(h,c)=(0,0)$, define
\[
\begin{aligned}
X_0^0:\mathbb R^2&\longrightarrow\mathbb S^3,\\
X_0^0(s,t)&=\bigl(\cos s,\sin s\,e^{it}\bigr),
\end{aligned}
\]
and set
\[
\operatorname{Hel}_0^0=X_0^0(\mathbb R^2)=\mathbb S^2,
\]
the totally geodesic sphere. The map $X_0^0$ is the usual polar
parametrization and fails to be an immersion along $s\in\pi\mathbb Z$.
\end{defn}

\begin{prop}
\label{prop:global_minimal_immersion}
Let $h\geq0$ and $0\le c<1/2$, with $(h,c)\neq(0,0)$. Then the map
\[
X_c^h:\mathbb R^2\longrightarrow\mathbb S^3
\]
defined by \eqref{eq:global_positive_c_parametrization} when $c>0$
and by \eqref{eq:global_lawson_parametrization} when $c=0$ is a smooth
helicoidal minimal immersion. Consequently, $\operatorname{Hel}_c^h$ is a helicoidal
minimal surface.
\end{prop}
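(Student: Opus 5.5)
The proof splits into the Lawson case $c=0$, $h>0$, where everything is explicit, and the case $0<c<1/2$, which is reduced to Castro et al.'s local minimality plus a continuity argument at the critical points of the height function.

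\emph{Case $c=0$, $h>0$.} Smoothness of \eqref{eq:global_lawson_parametrization} is clear. Differentiating gives
\[
\partial_s X=(-\sin s\,e^{iht},\cos s\,e^{it}),\qquad
\partial_t X=(ih\cos s\,e^{iht},i\sin s\,e^{it}).
\]
Hence the coefficients of the induced metric are $g_{11}=1$, $g_{12}=0$ and $g_{22}=h^2\cos^2 s+\sin^2 s>0$, since $h>0$. So $X_0^h$ is an immersion everywhere. For minimality I would compute the unit normal
\[
N=(\,i\sin s\,e^{iht},\,-ih\cos s\,e^{it}\,)/\sqrt{g_{22}}
\]
up to sign, and check that $\langle \partial_s^2X,N\rangle=0$, $\langle\partial_t^2X,N\rangle=0$ and $\langle\partial_s\partial_t X,N\rangle\neq0$ in general. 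Since the metric is diagonal and both diagonal second fundamental form entries vanish, $H=0$. Alternatively one may simply cite that this is Lawson's classical minimal helicoid.

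\emph{Case $0<c<1/2$.} Smoothness of $X_c^h$ follows immediately from the smoothness of $\varphi_c^h$ and $\lambda_c^h$ established above. For the immersion property, the induced metric of \eqref{eq:global_positive_c_parametrization} can be computed directly in latitude--longitude form, valid globally since $0<z<1$. This reproduces \eqref{eq:local_induced_metric}, using $\dot\varphi^2+\cos^2\varphi\,\dot\lambda^2=1$ and $\mathcal K=-(1-z^2)\dot\lambda$ from \eqref{eq:angular_momentum_longitude}. Its determinant is
\[
g_{22}-h^2\mathcal K^2=h^2+(1-h^2)z^2-h^2\mathcal K^2 .
\]
I will show this is positive on $[z_{\min},z_{\max}]$. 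Substituting \eqref{eq:Kc} and simplifying, I expect
\[
\det g=\frac{(h^2+(1-h^2)z^2)\,z^4}{z^4+h^2c^2}.
\]
This is positive because $h^2+(1-h^2)z^2=h^2(1-z^2)+z^2>0$. The formula holds at all $s$, including the critical points $s=k\ell$, since \eqref{eq:local_induced_metric} is an algebraic identity in $z,\dot\lambda$ that does not use $\dot z\neq0$.

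\emph{Minimality for $0<c<1/2$.} On each open interval $(k\ell,(k+1)\ell)$ the height $z$ is strictly monotone, and $z,\lambda$ satisfy \eqref{eq:Castro_reconstruction} with $\mathcal K=\mathcal K_c^h$. Hence, up to the translations and reflections given by \eqref{eq:z_symmetry}, the restriction of $X_c^h$ coincides with one of Castro et al.'s local minimal helicoidal parametrizations, by their Corollary~5.1 and the uniqueness in \cite[Theorem~3.1]{Castro2024}. Here I should check that reversing $s$ together with the sign convention keeps the same $\mathcal K$, and that rotations $e^{ia}$ in the first factor are isometries. So the mean curvature vanishes on the dense open set $\{s\notin\ell\mathbb Z\}$. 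Because $X_c^h$ is a smooth immersion, its mean curvature is continuous on $\mathbb R^2$, hence vanishes identically.

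The main obstacle is this last identification. The cited corollary is stated locally, presumably for a branch with a particular monotonicity. I need to be sure the decreasing branch yields the same $\mathcal K$ (rather than $-\mathcal K$) and hence is still minimal. If this becomes delicate, I would instead verify $H=0$ directly. The mean curvature of a helicoidal surface is an explicit expression in $z,\dot z,\ddot z,\mathcal K,\dot{\mathcal K}$ in which $\dot z$ appears only through $\dot z^2=F_c^h(z)$ and $\ddot z=\tfrac12 F_c^{h\prime}(z)$. Checking $H=0$ then reduces to an algebraic identity in $z$ valid for all $s$, using the global relations \eqref{eq:z_periodic_global} and \eqref{eq:lambda_derivative}.
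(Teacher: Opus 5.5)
Your proposal is correct and follows essentially the paper's route. Both arguments use the same determinant formula $\det g = z^4\bigl(h^2+(1-h^2)z^2\bigr)/(z^4+h^2c^2)>0$ for the immersion property, local minimality from Castro et al.\ on the dense set $\{\dot z\neq0\}$, continuity of the mean curvature across $s\in\ell\mathbb Z$, and the explicit metric in the Lawson case. The differences are minor: the paper invokes Castro's formula $2H=zA'(z)+2A(z)$, which gives $A=-c/z^2$ and involves $\mathcal K$ only as a function of $z$, so your decreasing-branch worry does not arise there because $\mathcal K_{\xi}(s)=\mathcal K_c^h(z(s))$ holds globally by construction; and for $c=0$ the paper cites Lawson instead of computing the second fundamental form as you do.
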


\begin{proof}
Assume first that $0<c<1/2$. The curve $\xi_c^h$ defined in
\eqref{eq:global_profile_curve} is smooth and unit-speed, and
$z=z_c^h$ is $L$-periodic with values in
$[z_{\min},z_{\max}]\subset(0,1)$. By
\eqref{eq:angular_momentum_longitude} and
\eqref{eq:lambda_derivative}, its spherical angular momentum satisfies
\[
\mathcal K_{\xi_c^h}(s)
=-(1-z_c^h(s)^2)\dot\lambda_c^h(s)
=\mathcal K_c^h(z_c^h(s))
\]
on all of $\mathbb R$. Hence $X_c^h$ is globally defined and smooth.
For the remainder of the proof, write $z=z_c^h$ and
$\lambda=\lambda_c^h$.

By direct computation, equivalently by
\eqref{eq:local_induced_metric}, the determinant of the induced metric is
\[
\det g
=
\frac{z^4\bigl(h^2+(1-h^2)z^2\bigr)}{z^4+h^2c^2}>0.
\]
Thus $X_c^h$ is an immersion on all of $\mathbb R^2$, including the
parameter lines $s=k\ell$ on which $\dot z=0$.

On every connected component of $\{\dot z\neq0\}$, the functions $z$,
$\lambda$, and $\mathcal K_\xi=\mathcal K_c^h(z)$ satisfy the local
reconstruction equations of Castro et al. For a helicoidal surface with
spherical angular momentum $\mathcal K=\mathcal K(z)$, their
mean-curvature formula is
\[
2H(z)=zA'(z)+2A(z),
\qquad
A(z)
=
\frac{\mathcal K(z)}
{\sqrt{h^2(1-\mathcal K(z)^2)+(1-h^2)z^2}}
\]
\cite[Eq.~(4.12)]{Castro2024}. Substitution of
$\mathcal K=\mathcal K_c^h$ gives $A(z)=-c/z^2$, and hence $H=0$ on
$\{\dot z\neq0\}\times\mathbb R$. This set is dense because the
critical points of $z$ are precisely $s=k\ell$. Since the
mean-curvature vector of the global immersion is continuous, it vanishes
on all of $\mathbb R^2$. Therefore $X_c^h$ is a smooth minimal
immersion.

Now let $c=0$ and $h>0$. For the explicit map
\eqref{eq:global_lawson_parametrization}, one has
\[
(X_0^h)^*g_{\mathbb S^3}
=
(\dif s)^2+\bigl(h^2\cos^2s+\sin^2s\bigr)(\dif t)^2,
\]
which is positive definite because $h>0$. Thus $X_0^h$ is an immersion.
It parametrizes a Lawson spherical helicoid. Its minimality is
classical \cite[Section~7]{Lawson1970} and also follows from the helicoidal
classification in \cite[Example~2.3]{Castro2024}.
\end{proof}

\section{Compactness Criterion}
\label{sec:compactness}

This section characterizes compactness for the global parametrization
constructed in Section~\ref{sec:global_immersions}. We first specify the
notion of compactness, then extract the relevant period
data and determine the local symmetries of the parametrization. A
developing-map argument then yields the compactness criterion. Finally, we
study the range of the normalized longitude increment.

\begin{defn}\label{def:compactness}
Let $(h,c)\neq(0,0)$. We say that
$\operatorname{Hel}_c^h$ is \emph{compact} if there exist a compact
connected smooth surface $M$ without boundary and a smooth immersion
\[
f:M\longrightarrow\mathbb S^3
\]
such that, for every $p\in M$, there exist open neighborhoods
$p\in U_p\subset M$ and $V_p\subset\mathbb R^2$, together with a
diffeomorphism
\[
\Phi_p:U_p\longrightarrow V_p
\]
satisfying
\[
f|_{U_p}=X_c^h\circ\Phi_p.
\]
The pair $(M,f)$ is called a \emph{compact realization} of
$\operatorname{Hel}_c^h$.

For $(h,c)=(0,0)$, we set
$\operatorname{Hel}_0^0=\mathbb S^2$, which is compact.
\end{defn}

\subsection{Period data and local symmetries}

For any smooth map $X:\mathbb R^2\to N$, where $N$ is a smooth
manifold, define its period group by
\begin{equation}\label{eq:translation_period_definition_general}
\operatorname{Per}(X)
:=
\left\{
 v\in\mathbb R^2:
 X(u+v)=X(u)\ \text{for every }u\in\mathbb R^2
\right\}.
\end{equation}
A nonzero vector of $\operatorname{Per}(X)$ is called a \emph{period vector}
of $X$.

We first consider the case $0<c<1/2$. For the local
symmetry analysis, we need the precise level sets of the height function, not
only its periodicity. The following lemma also identifies its minimal positive
period.

\begin{lem}\label{lem:minimal_height_period}
Let $h\geq0$ and $0<c<1/2$. Let $z=z_c^h$ be the global height function defined
by \eqref{eq:z_periodic_global}, let
$\varphi=\varphi_c^h$ be defined by \eqref{eq:varphi_definition}, and
let $L=2\ell$ be defined by
\eqref{eq:half_and_full_height_periods}. Then, for any
$r,s\in\mathbb R$,
\begin{equation}\label{eq:z_fiber_structure}
z(r)=z(s)
\quad\Longleftrightarrow\quad
r-s\in L\mathbb Z
\ \text{or}\
r+s\in L\mathbb Z.
\end{equation}
The same equivalence holds with $z$ replaced by $\varphi$. In
particular, $L$ is the minimal positive period of both $z$ and
$\varphi$; more precisely,
\[
z(s+P)=z(s)\quad\text{for every }s\in\mathbb R
\]
holds if and only if $P\in L\mathbb Z$.
\end{lem}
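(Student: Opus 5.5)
We prove \eqref{eq:z_fiber_structure} from two properties of the construction: $z$ is $L$-periodic and even, and $z$ is strictly monotone on each half-period.

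First, evenness. By \eqref{eq:z_symmetry}, $z(L-s)=z(s)$ for $0\le s\le L$. Combining this with $L$-periodicity gives $z(-s)=z(L-s)=z(s)$ for $s\in[0,L]$. Periodicity then extends the identity $z(-s)=z(s)$ to all $s\in\mathbb R$. Hence $z(r)=z(s)$ holds whenever $r-s\in L\mathbb Z$ or $r+s\in L\mathbb Z$, since then $r\equiv \pm s \pmod L$. This is the direction ``$\Leftarrow$''.

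Next, reduction to a fundamental domain. Every real number is congruent modulo $L$, up to a sign, to a unique point of $[0,\ell]$. Given $r$, take its representative $\bar r$ in $(-\ell,\ell]$ modulo $L$, and set $r'=|\bar r|\in[0,\ell]$. Then $r\equiv\pm r' \pmod L$ and $z(r)=z(r')$. Define $s'$ from $s$ in the same way.

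Now the direction ``$\Rightarrow$''. Suppose $z(r)=z(s)$; then $z(r')=z(s')$. The construction shows that $z$ is strictly increasing on $[0,\ell]$, so $z$ is injective there and $r'=s'$. Therefore $r\equiv\pm s' \equiv \pm s\pmod L$, with independent signs. This means $r-s\in L\mathbb Z$ or $r+s\in L\mathbb Z$, which proves \eqref{eq:z_fiber_structure}. The same equivalence holds for $\varphi$, because $\arcsin$ is injective on $[z_{\min},z_{\max}]\subset(0,1)$, so $\varphi(r)=\varphi(s)$ if and only if $z(r)=z(s)$.

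Finally, the minimal period. Periodicity already gives $z(s+P)=z(s)$ for all $s$ whenever $P\in L\mathbb Z$. Conversely, suppose $z(s+P)=z(s)$ for all $s$. Taking $s=0$ in \eqref{eq:z_fiber_structure} gives $P\in L\mathbb Z$ or $-P\in L\mathbb Z$ with no further information, so choose a generic point instead. Take $s=\ell/2$ and apply \eqref{eq:z_fiber_structure}: either $P\in L\mathbb Z$, or $P+\ell\in L\mathbb Z$. In the second case $P\equiv \ell\pmod L$. Then $z(0+P)=z(\ell)=z_{\max}\neq z_{\min}=z(0)$, which contradicts $z(s+P)=z(s)$ at $s=0$. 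Hence $P\in L\mathbb Z$, and since $z$ is nonconstant, $L$ is the minimal positive period. The statement for $\varphi$ follows by the same injectivity of $\arcsin$.

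The main obstacle is the bookkeeping in the ``$\Rightarrow$'' direction, namely checking that the reduction to $[0,\ell]$ is legitimate. It rests on three facts: strict monotonicity of $z$ on $[0,\ell]$, which holds because $\dot z=\sqrt{F_c^h(z)}>0$ on $(0,\ell)$; evenness of $z$; and $L$-periodicity.
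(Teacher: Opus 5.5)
Your proof is correct and is essentially the paper's argument. The paper uses $\sigma(u)=\int_{z_{\min}}^{u}\dif v/\sqrt{F_c^h(v)}$ and the two branch equations to show that the solutions of $z=u$ are exactly $\pm\sigma(u)$ modulo $L$, which is what your evenness-plus-strict-monotonicity-on-$[0,\ell]$ reduction establishes. The detour through $s=\ell/2$ is unnecessary, though, because taking $r=P$, $s=0$ in \eqref{eq:z_fiber_structure} does give information: the alternatives ``$P\in L\mathbb Z$ or $-P\in L\mathbb Z$'' coincide, so this already yields $P\in L\mathbb Z$, which is exactly how the paper concludes.
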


\begin{proof}
Recall from \eqref{eq:Fhc_definition} and
\eqref{eq:height_extrema} the function $F_c^h$ and the extremal values
$z_{\min},z_{\max}$ of the height function. Define
\[
\sigma(u)
=
\int_{z_{\min}}^{u}
\frac{\dif v}{\sqrt{F_c^h(v)}},
\qquad
z_{\min}\le u\le z_{\max}.
\]
Then $\sigma$ is strictly increasing from
$[z_{\min},z_{\max}]$ onto $[0,\ell]$. By
\eqref{eq:z_first_branch}, the unique point of the increasing branch
$[0,\ell]$ at which $z=u$ is $\sigma(u)$. Moreover,
\[
\int_u^{z_{\max}}
\frac{\dif v}{\sqrt{F_c^h(v)}}
=
\ell-\sigma(u),
\]
so \eqref{eq:z_second_branch} shows that the corresponding point of the
decreasing branch $[\ell,L]$ is
\[
L-\sigma(u).
\]
Consequently, modulo $L$, the points at which $z=u$ are represented by
$\sigma(u)$ and $-\sigma(u)$. Hence, by the $L$-periodicity of $z$,
\[
z(r)=z(s)
\quad\Longleftrightarrow\quad
r\equiv s\pmod L
\ \text{or}\
r\equiv -s\pmod L,
\]
which is exactly \eqref{eq:z_fiber_structure}.

Since $\arcsin$ is injective on $(0,1)$, we have
\[
z(r)=z(s)
\quad\Longleftrightarrow\quad
\varphi(r)=\varphi(s)
\]
for all $r,s\in\mathbb R$. Hence $z$ and $\varphi$ have the same period
group. Every element of $L\mathbb Z$ is a period by
\eqref{eq:z_periodic_global}. Conversely, if $P$ is a period of $z$,
then
\[
z(P)=z(0),
\]
and \eqref{eq:z_fiber_structure} with $r=P$ and $s=0$ yields
$P\in L\mathbb Z$. Therefore, the period group of both $z$ and
$\varphi$ is $L\mathbb Z$, and $L$ is their minimal positive period.
\end{proof}

We now normalize the longitude change over this minimal period.

\begin{defn}\label{def:q}
Let $h\geq0$ and $0<c<1/2$. Let $\lambda_c^h$ be the normalized longitude defined by
\eqref{eq:lambda_integral}, and let $L$ be the minimal positive period of
$z_c^h$ given by Lemma~\ref{lem:minimal_height_period}. The
\emph{normalized longitude increment} is
\begin{equation}\label{eq:q_definition}
q(h,c)
:=
\frac{\lambda_c^h(s+L)-\lambda_c^h(s)}{2\pi}.
\end{equation}
By Lemma~\ref{lem:longitude_increment}, this definition is independent
of $s$.
\end{defn}

Iterating \eqref{eq:q_definition}, also backwards when $m<0$, gives
\begin{equation}\label{eq:longitude_quasiperiodicity}
\lambda_c^h(s+mL)=\lambda_c^h(s)+2\pi m q(h,c)
\qquad\text{for every }m\in\mathbb Z.
\end{equation}
The translation relations associated with the height period $L$ and the
angular period $2\pi$ follow from
\eqref{eq:global_positive_c_parametrization} and
\eqref{eq:longitude_quasiperiodicity}. By \eqref{eq:varphi_definition} and
Lemma~\ref{lem:minimal_height_period}, $\varphi_c^h$ is $L$-periodic;
hence, for every $m,n\in\mathbb Z$,
\begin{equation*}
\begin{aligned}
X_c^h(s+mL,t+2\pi n)
=
\Bigl(
e^{2\pi i(mq(h,c)+nh)}
 \cos\varphi_c^h(s)e^{i(ht+\lambda_c^h(s))},\quad
\sin\varphi_c^h(s)e^{it}
\Bigr).
\end{aligned}
\end{equation*}
Since $\cos\varphi_c^h(s)>0$, it follows that
\begin{equation}\label{eq:global_translation_period_condition}
X_c^h(s+mL,t+2\pi n)=X_c^h(s,t)
\quad\Longleftrightarrow\quad
mq(h,c)+nh\in\mathbb Z.
\end{equation}

The next lemma uses the longitude equation and the fiber
characterization \eqref{eq:z_fiber_structure} of the height function to
show that these translations exhaust the local symmetries of $X_c^h$.

\begin{lem}
\label{lem:local_transition_rigidity}
Let $h\geq0$ and $0<c<1/2$, and let
\[
X=X_c^h
\]
be the global parametrization defined in
\eqref{eq:global_positive_c_parametrization}. Let $L$ be the minimal
positive period of the height function given by
Lemma~\ref{lem:minimal_height_period}, and let $q(h,c)$ be defined by
\eqref{eq:q_definition}. If
\[
F=(S,T):\Omega\longrightarrow\Omega'
\]
is a $C^1$-diffeomorphism between nonempty connected open subsets of
$\mathbb R^2$ such that
\[
X\circ F=X,
\]
then there exist integers $m,n\in\mathbb Z$ such that
\[
F(s,t)=(s+mL,t+2\pi n)
\qquad\text{for}\quad (s,t)\in\Omega.
\]
Moreover, the integers $m$ and $n$ satisfy
\begin{equation}\label{eq:positive_c_period_condition}
mq(h,c)+nh\in\mathbb Z.
\end{equation}
Conversely, if $m,n\in\mathbb Z$ satisfy
\eqref{eq:positive_c_period_condition}, then
\[
X(s+mL,t+2\pi n)=X(s,t)
\qquad\text{for all }(s,t)\in\mathbb R^2.
\]
\end{lem}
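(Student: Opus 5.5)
Compare the two complex coordinates of $X\circ F=X$ separately. The second coordinate gives
\[
\sin\varphi(S)e^{iT}=\sin\varphi(s)e^{it}.
\]
Since $\sin\varphi=z>0$, taking moduli gives $z(S(s,t))=z(s)$, and comparing arguments gives $T(s,t)-t\in2\pi\mathbb Z$. The map $(s,t)\mapsto T(s,t)-t$ is continuous on the connected set $\Omega$ and takes values in the discrete set $2\pi\mathbb Z$, so $T(s,t)=t+2\pi n$ for a single integer $n$.

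For $S$, Lemma~\ref{lem:minimal_height_period} shows that at each point either $S(s,t)-s\in L\mathbb Z$ or $S(s,t)+s\in L\mathbb Z$. The plan is to show that $S$ depends only on $s$ and is locally a translation. I would first work on the dense open set $\Omega^*=\{(s,t)\in\Omega: s\notin\ell\mathbb Z\}$. There $z(s)$ is not an extremal value, so $z(S)$ is not extremal either, and hence $S\notin\ell\mathbb Z$. The functions $s\mapsto S-s$ and $s\mapsto S+s$ take values in the discrete set $L\mathbb Z$. On each connected component of $\Omega^*$, the disjoint sets where $S-s\in L\mathbb Z$ and where $S+s\in L\mathbb Z$ are both closed. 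They cannot meet, since meeting would force $2s\in L\mathbb Z$, that is $s\in\ell\mathbb Z$. So exactly one alternative holds on each component, and there $S=\pm s+mL$ with $m$ locally constant.

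Now use the first coordinate, whose modulus already matches:
\[
\lambda(S)+hT\equiv\lambda(s)+ht \pmod{2\pi}.
\]
Suppose the reflected case $S=-s+mL$ holds on some component. By the symmetry \eqref{eq:z_symmetry}, $z$ is even about $0$. The longitude derivative $\dot\lambda$ is a function of $z$, so it is even as well, and therefore $\lambda$ is odd (recall $\lambda(0)=0$). Together with \eqref{eq:longitude_quasiperiodicity} this gives
\[
\lambda(-s+mL)=-\lambda(s)+2\pi mq.
\]
The congruence then becomes $2\lambda(s)\equiv\text{const}\pmod{2\pi}$ on an open set. This contradicts $\dot\lambda>0$ from \eqref{eq:lambda_derivative}. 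Hence $S=s+mL$ on each component of $\Omega^*$.

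The main obstacle is passing from the components of $\Omega^*$ to all of $\Omega$. The integer $m$ might a priori differ between components separated by the lines $s\in\ell\mathbb Z$. To handle this, note that on $\Omega^*$ we have $\partial_s S=1$ and $\partial_t S=0$. Since $S$ is $C^1$ and $\Omega^*$ is dense, these identities extend to all of $\Omega$. Because $\Omega$ is connected, $S(s,t)-s$ is then constant on $\Omega$, and that constant lies in $L\mathbb Z$, so it equals $mL$ for one integer $m$. Thus $F(s,t)=(s+mL,t+2\pi n)$ on $\Omega$.

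Evaluating $X\circ F=X$ at any single point and applying \eqref{eq:global_translation_period_condition} yields \eqref{eq:positive_c_period_condition}. The converse statement is exactly the reverse implication of \eqref{eq:global_translation_period_condition}.
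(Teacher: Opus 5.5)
Your proof is correct, and its skeleton matches the paper's: the second coordinate gives $T=t+2\pi n$ and $z(S)=z(s)$, the fiber structure of $z$ gives $S=\pm s+mL$, the first coordinate excludes the reflection, and the period condition follows from \eqref{eq:global_translation_period_condition}. The difference is in how you exclude the reflection.

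\textbf{The paper's route.} It observes that $\lambda(S)-\lambda(s)$ is continuous with values in a discrete set, hence equal to a constant $C$ on $\Omega$, and then differentiates. Since $\dot\lambda$ depends only on $z$ and $z(S)=z(s)$, this gives $\dot\lambda(s)(S_s-1)=0$, so $S_s=1$ at every point of $\Omega$. (Differentiating in $t$ likewise gives $S_t=0$.) This needs neither the oddness of $\lambda$ nor \eqref{eq:longitude_quasiperiodicity}. It also holds pointwise everywhere at once, so no density argument is required.

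\textbf{Your route.} You work on components of $\Omega^*$. There you use oddness of $\lambda$ and quasi-periodicity to reduce the reflected case to $2\lambda(s)\equiv\text{const}\pmod{2\pi}$, contradicting $\dot\lambda>0$. You then extend $\partial_sS=1$ and $\partial_tS=0$ across the lines $s\in\ell\mathbb Z$ by continuity of the derivatives.

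\textbf{What each buys.} Your argument is longer, but it makes explicit a point the paper passes over. On the lines $s\in\ell\mathbb Z$ the two alternatives $S-s\in L\mathbb Z$ and $S+s\in L\mathbb Z$ coincide. So continuity of $S$ alone, which is all the paper cites for ``$S=\pm s+mL$ on $\Omega$'', does not rule out a fold with $S=s+mL$ on one side of $s=k\ell$ and $S=-s+(m+k)L$ on the other. It is the $C^1$ hypothesis that excludes this: explicitly in your density step, and implicitly in the paper's differentiation, which repairs the gap.
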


\begin{proof}
Write $F(s,t)=(S(s,t),T(s,t))$. Comparison of the second complex components in $X\circ F=X$ gives
\[
\sin\varphi(S(s,t))e^{iT(s,t)}
=
\sin\varphi(s)e^{it}.
\]
Since
\[
0<z_{\min}\leq \sin\varphi(s)\leq z_{\max}<1,
\]
comparison of norms and arguments yields
\begin{equation}\label{eq:phi_in_rigidity_proof}
\varphi(S(s,t))=\varphi(s),
\qquad
T(s,t)-t\in2\pi\mathbb Z.
\end{equation}
The continuous function $(T-t)/(2\pi)$ is integer-valued and hence
constant on the connected set $\Omega$. Therefore, for some
$n\in\mathbb Z$,
\begin{equation}\label{eq:T_translation_in_rigidity_proof}
T(s,t)=t+2\pi n
\qquad\text{on }\Omega.
\end{equation}

It follows from Lemma~\ref{lem:minimal_height_period}, the continuity of $S$, and \eqref{eq:phi_in_rigidity_proof} that
\[
S(s,t)=\pm s+mL
\qquad\text{on }\Omega.
\]
Comparing the first complex components and using
\eqref{eq:phi_in_rigidity_proof} and
\eqref{eq:T_translation_in_rigidity_proof} gives
\[
\lambda(S(s,t))-\lambda(s)+2\pi hn\in2\pi\mathbb Z.
\]
Since this expression is continuous on the connected set $\Omega$, it
is constant. Hence
\begin{equation}\label{eq:lambda_constant_in_rigidity_proof}
\lambda(S(s,t))-\lambda(s)=C
\end{equation}
for some $C\in\mathbb R$.
 \eqref{eq:phi_in_rigidity_proof} implies $z(S)=z(s)$. Since
the right-hand side of \eqref{eq:lambda_derivative} depends only on
$z$, we have
\[
\dot\lambda(S)=\dot\lambda(s).
\]
Differentiating \eqref{eq:lambda_constant_in_rigidity_proof} with
respect to $s$ therefore gives
\[
0
=
\dot\lambda(S)S_s-\dot\lambda(s)
=
\dot\lambda(s)(S_s-1).
\]
Thus $S_s=1$ since $\dot\lambda(s)>0$, proving 
\begin{equation}\label{eq:S_translation_in_rigidity_proof}
S(s,t)=s+mL
\qquad\text{on }\Omega.
\end{equation}

Finally, substituting \eqref{eq:T_translation_in_rigidity_proof} and
\eqref{eq:S_translation_in_rigidity_proof} into the first complex
component of $X\circ F=X$, and using $\cos\varphi_c^h(s)>0$, gives
\[
e^{2\pi i(mq(h,c)+nh)}=1.
\]
Therefore
\[
mq(h,c)+nh\in\mathbb Z.
\]
Conversely, \eqref{eq:global_translation_period_condition} shows that
any pair $m,n\in\mathbb Z$ satisfying this condition defines a global
symmetry of $X$.
\end{proof}

We next turn to the Lawson case $c=0$. For $h>0$, recall from \eqref{eq:global_lawson_parametrization} that
\[
X_0^h(s,t)=\bigl(\cos s\,e^{iht},\sin s\,e^{it}\bigr).
\]
The following lemma determines all of its local symmetries.

\begin{lem}
\label{lem:lawson_complete_local_symmetry}
Let $h>0$, and let
$
\Phi=(S,T):\Omega\to\Omega'
$
be a $C^1$-diffeomorphism between nonempty connected open subsets of
$\mathbb R^2$. Then
\[
X_0^h\circ\Phi=X_0^h
\]
if and only if there exist
$
\varepsilon\in\{1,-1\}
$
and $k,m\in\mathbb Z$ such that
\begin{equation}\label{eq:lawson_complete_local_symmetry_form}
\Phi(s,t)=\bigl(\varepsilon s+k\pi,t+m\pi\bigr)
\end{equation}
on $\Omega$, with
\begin{equation}\label{eq:lawson_complete_local_symmetry_conditions}
m+k\in
\begin{cases}
2\mathbb Z, & \varepsilon=1,\\
2\mathbb Z+1, & \varepsilon=-1,
\end{cases}
\qquad
hm+k\in2\mathbb Z.
\end{equation}
Equivalently, when $\varepsilon=1$, the integers $m$ and $k$ are
both even or both odd; when $\varepsilon=-1$, exactly one of them is
even. The map $\Phi$ is a translation in the first case and is
orientation reversing in the second.
\end{lem}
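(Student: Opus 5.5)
The plan is to follow the same strategy as in Lemma~\ref{lem:local_transition_rigidity}: compare the two complex components of $X_0^h\circ\Phi=X_0^h$, extract pointwise constraints, and use connectedness of $\Omega$ to make all integer-valued data constant. The new feature is that $\cos s$ and $\sin s$ vanish on lines. So I will first work on the open dense set where $\sin s\cos s\neq0$, and then extend by continuity.

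\textbf{Step 1: the moduli.} Write $\Phi=(S,T)$. Taking moduli in both components gives $|\cos S|=|\cos s|$ and $|\sin S|=|\sin s|$. Hence, pointwise, $S\in\{s,-s\}+\pi\mathbb Z$. On a point where $\sin 2s\ne0$, the four candidates $\pm s+k\pi$ are locally separated. Since $S$ is continuous, on each connected component of $\Omega\cap\{\sin 2s\neq0\}$ we get $S=\varepsilon s+k\pi$ with $\varepsilon,k$ constant.

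To pass across the lines $s\in\frac{\pi}{2}\mathbb Z$, I will use that $S$ is $C^1$ and $\Phi$ is a diffeomorphism. Near such a line, $S(s,t)$ has to be one of the branches $\pm s+k\pi$ on each side. Also $S_s\in\{\pm1\}$ on the dense set, so by continuity $S_s=\pm1$ is constant near the line. This forces the same $\varepsilon$ on both sides, and continuity of $S$ at the line then forces the same $k$. Hence $S=\varepsilon s+k\pi$ on all of $\Omega$. I expect this gluing step to be the main technical obstacle; the cleanest route is probably to argue that the set where $S=\varepsilon s+k\pi$ holds for a fixed pair $(\varepsilon,k)$ is open and closed in $\Omega$.

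\textbf{Step 2: the phases.} Substituting $S=\varepsilon s+k\pi$ gives
\[
\cos S=(-1)^k\cos s,\qquad \sin S=\varepsilon(-1)^k\sin s.
\]
Comparing the second components where $\sin s\neq0$ yields $e^{iT}=\varepsilon(-1)^k e^{it}$. Thus $T-t\in\pi\mathbb Z$ with the parity of $(T-t)/\pi$ fixed, and by continuity $T=t+m\pi$ on $\Omega$. The parity condition reads $(-1)^m=\varepsilon(-1)^k$, which is exactly $m+k$ even for $\varepsilon=1$ and odd for $\varepsilon=-1$. Comparing the first components where $\cos s\neq0$ gives $(-1)^k e^{ih(t+m\pi)}=e^{iht}$, that is, $e^{i\pi(hm+k)}=1$, so $hm+k\in2\mathbb Z$. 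The open sets where $\sin s\neq0$ or $\cos s\neq0$ are nonempty in any open $\Omega$, so these constraints are genuinely obtained.

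\textbf{Step 3: the converse and the orientation claim.} For the converse, under \eqref{eq:lawson_complete_local_symmetry_conditions} the same two identities show $X_0^h\circ\Phi=X_0^h$ everywhere. The Jacobian of $\Phi$ is $\operatorname{diag}(\varepsilon,1)$, so $\Phi$ is a translation when $\varepsilon=1$ and orientation reversing when $\varepsilon=-1$. The parity restatement of the conditions is immediate.
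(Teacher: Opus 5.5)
Your proof is correct, but it orders the steps differently from the paper, and that changes where the work lies.

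\textbf{Your route.} You determine $S$ first, from the moduli alone. Those relations admit continuous non-affine solutions such as $S=|s|$, so you genuinely need the $C^1$ hypothesis to glue branches across the lines $s\in\frac{\pi}{2}\mathbb Z$. Your gluing is sound, and it can be done in one stroke rather than line by line. Since $S_s$ is continuous on $\Omega$ and lies in $\{\pm1\}$ on a dense open set, it is a constant $\varepsilon$. Likewise $S_t\equiv0$. Hence $S-\varepsilon s$ is constant on $\Omega$, and that constant lies in $\pi\mathbb Z$.

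\textbf{The paper's route.} The paper extracts $T$ first. On the dense set $\{\sin s\neq0\}$, the relation $|\sin S|=|\sin s|$ turns the second component into $e^{i(T-t)}=\sin s/\sin S\in\{\pm1\}$. This gives $T=t+m\pi$ on $\Omega$, and hence the global sign relation $\sin S=(-1)^m\sin s$. The first component then forces $e^{i\pi hm}=(-1)^k$, so $\cos S=(-1)^k\cos s$ with $hm+k\in2\mathbb Z$. Setting $\varepsilon=(-1)^{m+k}$, one gets $(\cos S,\sin S)=(\cos(\varepsilon s+k\pi),\sin(\varepsilon s+k\pi))$ everywhere. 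So $(S-\varepsilon s-k\pi)/2\pi$ is continuous and integer-valued, hence constant.

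\textbf{Comparison.} The paper's ordering removes the gluing step entirely, because the sign data are global constants before $S$ is examined. It also uses only continuity of $\Phi$. Your ordering costs the derivative argument. In exchange, it cleanly separates the geometric content (from the moduli: $S$ is a reflection or translation in $s$) from the arithmetic content (from the phases: the parity and $hm+k\in2\mathbb Z$ conditions).
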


\begin{proof}
Assume first that $X_0^h\circ\Phi=X_0^h$. Comparing the norms of the
complex components gives
\[
|\cos S|=|\cos s|,
\qquad
|\sin S|=|\sin s|.
\]
On the dense open set where $\sin s\neq0$, the second component implies
$T-t\in\pi\mathbb Z$. Since $T-t$ is continuous, the same inclusion holds
on all of $\Omega$; connectedness then gives a single $m\in\mathbb Z$ such
that
\begin{equation}\label{eq:lawson_T_global}
T=t+m\pi
\end{equation}
on $\Omega$, and hence
\begin{equation}\label{eq:lawson_sine_sign}
\sin S=(-1)^m\sin s.
\end{equation}
The first component now gives
\[
e^{i\pi hm}\cos S=\cos s.
\]
Since $\Omega$ is not contained in the zero set of $\cos s$,
$e^{i\pi hm}\in\{1,-1\}$. Thus there is $k\in\mathbb Z$ such that
\begin{equation}\label{eq:lawson_cosine_sign}
hm+k\in2\mathbb Z,
\qquad
\cos S=(-1)^k\cos s.
\end{equation}

Set
\[
\varepsilon:=(-1)^{m+k}\in\{1,-1\}.
\]
Equations \eqref{eq:lawson_sine_sign} and
\eqref{eq:lawson_cosine_sign} yield
\[
\begin{aligned}
(\cos S,\sin S)
&=
\bigl((-1)^k\cos s,(-1)^m\sin s\bigr)\\
&=
\bigl(\cos(\varepsilon s+k\pi),
      \sin(\varepsilon s+k\pi)\bigr).
\end{aligned}
\]
Therefore the function
\[
r(s,t)
:=
\frac{S(s,t)-\varepsilon s-k\pi}{2\pi}
\]
is continuous and integer-valued on the connected set $\Omega$.
Consequently it is constant; write its value as $r\in\mathbb Z$. Then
\[
S=\varepsilon s+(k+2r)\pi
\qquad\text{for}\quad (s,t)\in\Omega.
\]
Replacing $k$ by $k+2r$ preserves both the parity of $m+k$ and the
condition $hm+k\in2\mathbb Z$. Hence we may write
\[
S=\varepsilon s+k\pi.
\]
Moreover,
\[
\varepsilon=1
\iff m+k\in2\mathbb Z,
\qquad
\varepsilon=-1
\iff m+k\in2\mathbb Z+1.
\]
Together with \eqref{eq:lawson_T_global} and
\eqref{eq:lawson_cosine_sign}, this proves
\eqref{eq:lawson_complete_local_symmetry_form}--
\eqref{eq:lawson_complete_local_symmetry_conditions}.

Conversely, direct substitution of
\eqref{eq:lawson_complete_local_symmetry_form} and
\eqref{eq:lawson_complete_local_symmetry_conditions} into
\eqref{eq:global_lawson_parametrization} gives
$X_0^h\circ\Phi=X_0^h$.
\end{proof}

\begin{coro}\label{cor:lawson_irrational_rigidity}
Let $h>0$ be irrational. If
\[
\Phi:\Omega\longrightarrow\Omega'
\]
is a $C^1$-diffeomorphism between nonempty connected open subsets of
$\mathbb R^2$ such that $X_0^h\circ\Phi=X_0^h$, then
\[
\Phi(s,t)=(s+2\pi r,t)
\]
for some $r\in\mathbb Z$.
\end{coro}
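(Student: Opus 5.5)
The corollary follows directly from Lemma~\ref{lem:lawson_complete_local_symmetry}. Suppose $X_0^h\circ\Phi=X_0^h$ on a connected open set $\Omega$. That lemma gives $\varepsilon\in\{1,-1\}$ and integers $k,m$ with
\[
\Phi(s,t)=(\varepsilon s+k\pi,\,t+m\pi),
\qquad hm+k\in2\mathbb Z,
\]
where $m+k$ is even when $\varepsilon=1$ and odd when $\varepsilon=-1$. The plan is to use the irrationality of $h$ on the condition $hm+k\in2\mathbb Z$ to force $m=0$, and then read off the parity constraints.

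First, write $hm+k=2p$ with $p\in\mathbb Z$. If $m\neq0$, then $h=(2p-k)/m\in\mathbb Q$, which contradicts the irrationality of $h$. Hence $m=0$, and so $k=2p$ is even.

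Second, with $m=0$ the parity of $m+k$ equals the parity of $k$, which is even. The case $\varepsilon=-1$ requires $m+k$ to be odd, so it is excluded, and we get $\varepsilon=1$. Writing $k=2r$, this gives $\Phi(s,t)=(s+2\pi r,t)$ on $\Omega$, as claimed.

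There is no real obstacle here: all the work was done in Lemma~\ref{lem:lawson_complete_local_symmetry}. The only point that needs care is the parity bookkeeping, which is what rules out the orientation-reversing branch $\varepsilon=-1$.
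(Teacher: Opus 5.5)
Your proposal is correct and follows essentially the same argument as the paper: apply Lemma~\ref{lem:lawson_complete_local_symmetry}, use irrationality of $h$ in $hm+k\in2\mathbb Z$ to force $m=0$ and $k$ even, and then the parity condition excludes $\varepsilon=-1$. Writing $k=2r$ gives the stated form.
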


\begin{proof}
Apply Lemma~\ref{lem:lawson_complete_local_symmetry}. Since $h$ is
irrational, the condition $hm+k\in2\mathbb Z$ forces $m=0$
and thus $k\in2\mathbb Z$. Now $m+k\in2\mathbb Z$, the first condition in
\eqref{eq:lawson_complete_local_symmetry_conditions} forces $\varepsilon=1$.
\end{proof}

\subsection{Developing maps and compactness}

The compactness proof proceeds by identifying the universal cover of a
compact realization with the parameter plane. Under this identification,
deck transformations become symmetries of the global parametrization, and
compactness forces the resulting translation group to have rank two. We
first record the developing-map, covering, and planar-group ingredients
needed for this argument.

\begin{lem}
\label{lem:translational_developing_map}
Let $\widetilde M$ be a connected and simply connected smooth surface,
let $N$ be a smooth manifold, and let
\[
\widetilde f:\widetilde M\longrightarrow N,
\qquad
X:\mathbb R^2\longrightarrow N
\]
be smooth maps. Suppose that $\widetilde M$ admits a covering by
nonempty connected open sets $U_\alpha$ and diffeomorphisms
\[
D_\alpha:U_\alpha\longrightarrow W_\alpha,
\qquad W_\alpha\subset\mathbb R^2\ \text{open},
\]
such that
\[
\widetilde f=X\circ D_\alpha
\qquad\text{on }U_\alpha.
\]
Assume that, on every connected component of
$U_\alpha\cap U_\beta$, the difference $D_\beta-D_\alpha$ is a
constant vector in the period group
$\operatorname{Per}(X)$ defined in
\eqref{eq:translation_period_definition_general}.

Then there exists a local diffeomorphism
\[
D:\widetilde M\longrightarrow\mathbb R^2
\]
such that $\widetilde f=X\circ D$.
\end{lem}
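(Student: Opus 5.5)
The strategy is a standard monodromy (analytic continuation) argument, carried out in the simplest form available here: correct each chart by a locally constant period vector so that the charts agree on overlaps. Since the transition data are translations by elements of the group $\operatorname{Per}(X)$, all corrections stay inside that group, and simple connectivity of $\widetilde M$ will kill the only possible obstruction.

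First I would reduce to a cocycle. For each pair $\alpha,\beta$ and each connected component $C$ of $U_\alpha\cap U_\beta$, write $g_{\alpha\beta}^C:=D_\beta-D_\alpha\in\operatorname{Per}(X)$. On triple overlaps this is a locally constant \v{C}ech $1$-cocycle, since $g_{\alpha\beta}+g_{\beta\gamma}=g_{\alpha\gamma}$ pointwise. It takes values in the abelian group $\operatorname{Per}(X)$, regarded as a discrete subgroup of $\mathbb R^2$ with the discrete topology. We want to show that it is a coboundary; that is, we want constants $v_\alpha\in\operatorname{Per}(X)$ with $D_\beta-D_\alpha=v_\alpha-v_\beta$ on all overlaps. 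We would then set $D:=D_\alpha+v_\alpha$ on $U_\alpha$.

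To produce the $v_\alpha$ concretely, I would avoid \v{C}ech formalism and use path continuation. Fix a base point $p_0\in U_{\alpha_0}$. For any point $p$, choose a path $\gamma$ from $p_0$ to $p$. By a Lebesgue-number argument, pick a chain of charts $U_{\alpha_0},\dots,U_{\alpha_N}$ covering successive subarcs, so that consecutive subarcs meet in a common chart overlap. Define
\[
D_\gamma(p)=D_{\alpha_N}(p)+\sum_{i}\bigl(D_{\alpha_{i}}-D_{\alpha_{i+1}}\bigr)(\gamma(\tau_i)),
\]
where the differences are evaluated at the junction points $\gamma(\tau_i)$. Each summand is a period vector.

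The next steps are to check that $D_\gamma(p)$ is well defined and smooth.

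\emph{Independence of the chain.} Refining the chain does not change the value, because the differences are locally constant on components of overlaps.

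\emph{Independence of the path.} The value is invariant under small homotopies. Here I would use a Lebesgue-number subdivision of the homotopy square, whose small squares lie in single charts. Since $\widetilde M$ is simply connected, any two paths with the same endpoints are homotopic rel endpoints, so $D(p):=D_\gamma(p)$ is well defined.

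\emph{Smoothness.} Near $p$, extend the path inside the last chart $U_{\alpha_N}$. Then $D=D_{\alpha_N}+\text{const}$ there, so $D$ is smooth and a local diffeomorphism.

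\emph{The identity $\widetilde f=X\circ D$.} Since $X(u+v)=X(u)$ for $v\in\operatorname{Per}(X)$, we get $X\circ D=X\circ D_{\alpha_N}=\widetilde f$ on $U_{\alpha_N}$.

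I expect the homotopy-invariance step to be the main obstacle, although it is routine. The key point is that the local constancy of the differences must be used on connected components of the overlaps, not on the whole overlaps. This requires care, because the hypothesis only asserts constancy componentwise. I would handle it by working throughout with the component containing the relevant junction point. Equivalently, one can package the whole argument as follows. Consider the sheaf of germs of local maps $E$ with $\widetilde f=X\circ E$ that differ from some $D_\alpha$ by a period vector. This sheaf has an étalé space that is a covering of $\widetilde M$ with fiber $\operatorname{Per}(X)$. Because $\widetilde M$ is simply connected, this covering admits a global section, and that section is $D$.
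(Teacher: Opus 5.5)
Your argument is correct, but it takes a different route from the paper.

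The paper never continues along paths by hand. Because the transitions are translations, the differentials $dD_\alpha$ agree on overlaps and glue to a global closed $\mathbb R^2$-valued $1$-form $\omega$, which is exact on the simply connected $\widetilde M$. A primitive $D$ normalized by $D=D_{\alpha_0}$ on $U_{\alpha_0}$ then satisfies $D=D_\alpha+c_\alpha$ on each connected $U_\alpha$. The period condition enters only afterwards: a chain of charts gives $c_\alpha\in\operatorname{Per}(X)$, since $c_\beta-c_\alpha=D_\alpha-D_\beta$ on a component of each overlap. So the paper delegates your homotopy-invariance step to the standard exactness of closed forms on simply connected manifolds, which is itself proved by that same path-integration argument. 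It also separates the construction of $D$ from the period bookkeeping, and the proof is much shorter.

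Your path continuation builds the period correction into the construction and is self-contained. Your \'etal\'e-space formulation is cleaner still: the germs $D_\alpha+v$ with $v\in\operatorname{Per}(X)$ form a covering trivialized over each $U_\alpha$, which has a global section because $\widetilde M$ is simply connected. In that form your argument adapts directly to transition maps in any group of global symmetries of $X$; this is the classical developing-map construction. The $1$-form gluing, by contrast, needs the transitions to be translations. That is why, in the Lawson case, the paper first composes orientation-reversing charts with $\rho_{j,\nu}$ before applying this lemma.

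One small inaccuracy: $\operatorname{Per}(X)$ need not be a discrete subgroup of $\mathbb R^2$ for an arbitrary smooth $X$ (take $X$ constant). You only use it with the discrete topology, together with the given componentwise constancy of $D_\beta-D_\alpha$, so nothing breaks.
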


\begin{proof}
Since the transition maps are translations, the local
$\mathbb R^2$-valued $1$-forms $dD_\alpha$ agree on overlaps and
therefore define a global closed form
\[
\omega\in\Omega^1(\widetilde M;\mathbb R^2).
\]
Applying the usual exactness result to the two real components of
$\omega$, and using that $\widetilde M$ is simply connected, we obtain
that $\omega$ is exact. Hence
there is a smooth map
\[
D:\widetilde M\longrightarrow\mathbb R^2
\]
such that
\[
dD=\omega.
\]
Adding a constant vector, we may normalize $D$ so that
$D=D_{\alpha_0}$ on one fixed chart $U_{\alpha_0}$.

For every $\alpha$,
\[
d(D-D_\alpha)=0
\qquad\text{on }U_\alpha.
\]
Since $U_\alpha$ is connected, there is a constant vector
$c_\alpha\in\mathbb R^2$ such that
\[
D=D_\alpha+c_\alpha
\qquad\text{on }U_\alpha.
\]
On a connected component of $U_\alpha\cap U_\beta$, the identities
$D=D_\alpha+c_\alpha=D_\beta+c_\beta$ give
\[
c_\beta-c_\alpha
=-(D_\beta-D_\alpha)\in\operatorname{Per}(X).
\]
Because $\widetilde M$ is connected, $U_{\alpha_0}$ and $U_\alpha$ can be
joined by a finite chain of charts with nonempty overlaps. Applying the
preceding relation on one connected component of each overlap shows that
$c_\alpha-c_{\alpha_0}$ is a finite sum of vectors in
$\operatorname{Per}(X)$. Since $c_{\alpha_0}=0$ by the normalization and
$\operatorname{Per}(X)$ is an additive subgroup of $\mathbb R^2$, it follows
that
\[
c_\alpha\in\operatorname{Per}(X).
\]
Consequently,
\[
X\circ D
=
X\circ(D_\alpha+c_\alpha)
=
X\circ D_\alpha
=
\widetilde f
\]
on every $U_\alpha$, and hence on all of $\widetilde M$.

Since $dD=dD_\alpha$ locally and every $D_\alpha$ is a
diffeomorphism onto an open subset of $\mathbb R^2$, the map $D$ is
a local diffeomorphism.
\end{proof}

We shall use the following standard covering criterion
\cite[Chapter~VII, Lemma~3.3]{doCarmo1992}.

\begin{lem}\label{lem:surjective_covering_criterion}
Let $(M,g_M)$ be a complete Riemannian manifold, let $(N,g_N)$ be a
Riemannian manifold, and let
\[
F:M\longrightarrow N
\]
be a surjective local diffeomorphism. Suppose that
\[
\lVert dF_p(v)\rVert_N
\geq
\lVert v\rVert_M
\]
for every $p\in M$ and every $v\in T_pM$. Then $F$ is a covering map.
\end{lem}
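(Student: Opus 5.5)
We prove the covering criterion of Lemma~\ref{lem:surjective_covering_criterion} by the standard path-lifting argument. It suffices to show that $F$ has the unique path-lifting property for smooth (or piecewise smooth) curves and that $F$ restricted to a small ball is well behaved; for a Riemannian local isometry-type situation it is cleanest to show directly that every point $y\in N$ has a neighborhood evenly covered.

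First we show that $F$ has the path-lifting property. Let $\gamma:[0,1]\to N$ be a smooth curve with $\gamma(0)=F(p)$. Since $F$ is a local diffeomorphism, a lift $\tilde\gamma$ with $\tilde\gamma(0)=p$ exists on some maximal half-open interval $[0,a)$, and it is unique because two lifts agreeing at a point agree on an open and closed set. The norm inequality gives
\[
\operatorname{length}_M(\tilde\gamma|_{[t_1,t_2]})
\le
\operatorname{length}_N(\gamma|_{[t_1,t_2]}),
\]
so $\tilde\gamma(t)$ is Cauchy as $t\to a^-$. Completeness of $M$ yields a limit point $\tilde q$ with $F(\tilde q)=\gamma(a)$. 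A local inverse of $F$ near $\tilde q$ then extends the lift beyond $a$ (or to $a=1$), contradicting maximality unless $a=1$ and the lift is defined on all of $[0,1]$.

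Next we show that small geodesic balls are evenly covered. Fix $y\in N$ and choose $\varepsilon>0$ such that $B=B_\varepsilon(y)$ is a normal ball, so each point of $B$ is joined to $y$ by a unique radial geodesic in $B$. For each $x\in F^{-1}(y)$, lifting the radial geodesics from $x$ defines a continuous map $\sigma_x:B\to M$ with $F\circ\sigma_x=\mathrm{id}_B$; continuity follows from continuous dependence of lifts on the curve, since $\sigma_x$ agrees locally with local inverses of $F$. Set $U_x=\sigma_x(B)$. Surjectivity of $F$ ensures that $F^{-1}(y)\neq\emptyset$.

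We then check three properties of the sets $U_x$.
\begin{itemize}
\item \emph{Covering.} Every $z\in F^{-1}(B)$ lies in some $U_x$: lift the reversed radial geodesic from $F(z)$ to $y$ starting at $z$, and let $x$ be its endpoint; uniqueness of lifts gives $z=\sigma_x(F(z))$.
\item \emph{Disjointness.} The sets $U_x$ are pairwise disjoint, again by uniqueness of lifts of the radial geodesic.
\item \emph{Diffeomorphism.} Each $F|_{U_x}:U_x\to B$ is a bijection with continuous inverse $\sigma_x$, and $U_x$ is open as the image under local inverses of $F$. Hence $F|_{U_x}$ is a diffeomorphism.
\end{itemize}

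The main obstacle is the first step: the lift must not escape to infinity in finite time. This is exactly where the inequality $\lVert dF(v)\rVert\ge\lVert v\rVert$ and the completeness of $M$ are used. The remaining steps are routine covering-space bookkeeping.
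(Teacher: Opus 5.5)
Your proposal is correct and follows the standard argument behind the paper's citation of do Carmo. You establish path lifting from the completeness of $M$ and the length inequality, which is exactly the Cauchy-sequence argument the paper itself writes out in Lemma~\ref{lem:covering_criterion}. You then prove directly, via normal balls, the topological fact that a local diffeomorphism with unique path lifting is a covering map, where the cited source simply invokes it.

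The one step you state too briefly is the continuity of $\sigma_x$. It needs the usual sup-argument for lifting the homotopy $(s,b)\mapsto\exp_y\bigl(s\exp_y^{-1}(b)\bigr)$. Alternatively, you can observe that the lifts are geodesics of the complete metric $F^*g_N$. Either way the step is standard.
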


The preceding criterion assumes surjectivity. When the target is
connected, this assumption follows from the same metric inequality.

\begin{lem}\label{lem:covering_criterion}
Let $M$ and $N$ be connected Riemannian manifolds, with $M$ complete,
and let
\[
F:M\longrightarrow N
\]
be a local diffeomorphism. Assume that
\[
\lVert dF_p(v)\rVert_N
\geq
\lVert v\rVert_M
\]
for every $p\in M$ and every $v\in T_pM$. Then $F$ is surjective and
is a covering map onto $N$.
\end{lem}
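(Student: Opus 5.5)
The strategy is to show that $F(M)$ is both open and closed in $N$; connectedness of $N$ then gives surjectivity, and Lemma~\ref{lem:surjective_covering_criterion} gives the covering property. Openness is immediate, since a local diffeomorphism is an open map. The main work is closedness, and the key tool is a path-lifting argument that uses the metric inequality together with completeness of $M$.

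For closedness, I take $y\in\overline{F(M)}$ and fix a convex (or simply normal) geodesic ball $B=B_N(y,\delta)$ around $y$. I choose $p\in M$ with $F(p)\in B$ and let $\gamma:[0,1]\to B$ be the radial geodesic from $F(p)$ to $y$. Its length is less than $\delta$ (or $2\delta$ in the non-convex version).

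I then try to lift $\gamma$ through $F$ starting at $p$. Let $I\subset[0,1]$ be the set of $\tau$ for which there is a continuous lift $\tilde\gamma$ on $[0,\tau]$ with $F\circ\tilde\gamma=\gamma$ and $\tilde\gamma(0)=p$.

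\begin{itemize}
\item $I$ is open, because $F$ is a local diffeomorphism.
\item Lifts are unique, since they are lifts through local homeomorphisms from a connected interval into a Hausdorff space. Hence they patch together on a half-open interval $[0,\tau_0)$.
\item The lift is piecewise $C^1$, since $F$ is locally a diffeomorphism.
\item By the inequality $\lVert dF(v)\rVert\ge\lVert v\rVert$, the lift has length at most the length of $\gamma$ on $[0,\tau_0)$, which is finite.
\end{itemize}

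It follows that $\tilde\gamma(\tau)$ is Cauchy as $\tau\to\tau_0^-$. By completeness of $M$ (Hopf--Rinow: bounded closed sets are compact, or directly, Cauchy sequences converge), $\tilde\gamma(\tau)$ converges to a point $\tilde q$. Continuity gives $F(\tilde q)=\gamma(\tau_0)$, so the lift extends to $\tau_0$. A local inverse of $F$ near $\tilde q$ then extends it slightly beyond $\tau_0$. Hence $I=[0,1]$, and $y=F(\tilde\gamma(1))\in F(M)$.

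Therefore $F(M)$ is nonempty, open, and closed in the connected space $N$, so $F(M)=N$. Lemma~\ref{lem:surjective_covering_criterion} then applies and shows that $F$ is a covering map.

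I expect the only delicate point to be the extension of the lift at the supremum $\tau_0$: one must verify that the limit point exists and that the extended lift remains compatible. This is handled by the length comparison combined with completeness, as above. Uniqueness of lifts guarantees that "the" lift on $[0,\tau_0)$ is well defined. In fact, no property of the ball $B$ beyond finite length of $\gamma$ is needed. Any piecewise smooth path from a point of $F(M)$ to $y$ would work, and such a path exists because $N$ is a connected manifold. So the argument could even bypass the closure step, lifting a path from $F(p)$ to an arbitrary $y\in N$ directly.
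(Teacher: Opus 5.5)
Your proposal is correct and is essentially the paper's argument: lift a piecewise smooth path through local inverses of $F$, use $\lVert dF(v)\rVert\geq\lVert v\rVert$ to bound the lift's length so it is Cauchy at the supremum of its existence interval, extend it there by completeness of $M$ and a local inverse, and then apply Lemma~\ref{lem:surjective_covering_criterion}. The only difference is your open--closed framing, which, as you note yourself, is unnecessary; the paper lifts a path from $F(p_0)$ to an arbitrary $q\in N$ directly.
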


\begin{proof}
Fix $p_0\in M$. Since $N$ is connected, for every $q\in N$ there
exists a piecewise smooth path
\[
\sigma:[0,1]\longrightarrow N,
\qquad
\sigma(0)=F(p_0),
\qquad
\sigma(1)=q.
\]
By successive continuation through local inverses of $F$, the path
$\sigma$ admits a unique maximal lift
\[
\widetilde\sigma:[0,b)\longrightarrow M,
\qquad
0<b\leq1,
\]
with $\widetilde\sigma(0)=p_0$. On each smooth segment of $\sigma$,
\[
dF_{\widetilde\sigma(t)}
\bigl(\dot{\widetilde\sigma}(t)\bigr)
=
\dot\sigma(t),
\]
and hence
\[
\lVert\dot{\widetilde\sigma}(t)\rVert_M
\leq
\lVert\dot\sigma(t)\rVert_N.
\]
It follows that, for $0\leq s<t<b$,
\[
d_M\bigl(\widetilde\sigma(s),\widetilde\sigma(t)\bigr)
\leq
\operatorname{Length}\bigl(\sigma|_{[s,t]}\bigr).
\]
Since
\[
\operatorname{Length}\bigl(\sigma|_{[s,t]}\bigr)\longrightarrow0
\qquad\text{as }s,t\longrightarrow b^-,
\]
the curve $\widetilde\sigma(t)$ is Cauchy as $t\to b^-$. By
completeness, it converges to some $p_b\in M$, and continuity gives
\[
F(p_b)=\sigma(b).
\]

If $b<1$, choose a local inverse of $F$ defined on a neighborhood of
$\sigma(b)$ and taking $\sigma(b)$ to $p_b$. Since
$\widetilde\sigma(t)\to p_b$ and $\sigma(t)\to\sigma(b)$ as
$t\to b^-$, this inverse extends $\widetilde\sigma$ beyond $b$,
contradicting maximality. Hence $b=1$. Setting
\[
\widetilde\sigma(1)=p_b,
\]
we obtain
\[
F\bigl(\widetilde\sigma(1)\bigr)=\sigma(1)=q.
\]
Thus $F$ is surjective. Lemma~\ref{lem:surjective_covering_criterion}
now shows that $F$ is a covering map.
\end{proof}

The preceding covering criterion will identify the universal cover of a
compact realization with the global parameter plane. Once this
identification is established, compactness is reduced to the structure
of the corresponding deck transformation group. We therefore record
the elementary facts about planar translation groups and finite-index
subgroups that will be used below.

\begin{lem}\label{lem:planar_quotient_groups}
\begin{enumerate}
\item Let $\Lambda\leq(\mathbb R^2,+)$ be a discrete subgroup. Then
$\mathbb R^2/\Lambda$ is compact if and only if
\[
\Lambda=\mathbb Zv_1\oplus\mathbb Zv_2
\]
for two linearly independent vectors $v_1,v_2\in\mathbb R^2$.
In particular, a discrete translation group of rank at most one does
not act cocompactly on $\mathbb R^2$.

\item Let a group $G$ act freely and properly discontinuously by
smooth diffeomorphisms on a connected manifold $Y$, and let $H\leq G$.
Then the natural map
\[
Y/H\longrightarrow Y/G
\]
is a covering map whose fibers have cardinality $[G:H]$. Hence it has
finite degree if and only if $[G:H]<\infty$, in which case its degree
is $[G:H]$. Consequently, if $Y/G$ is compact, then
\[
Y/H\text{ is compact}
\quad\Longleftrightarrow\quad
[G:H]<\infty.
\]
\end{enumerate}
\end{lem}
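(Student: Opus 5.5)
For part (1), I would first use the standard structure theorem for discrete subgroups of $\mathbb R^n$: a discrete subgroup $\Lambda\leq\mathbb R^2$ is generated over $\mathbb Z$ by $r\le 2$ linearly independent vectors $v_1,\dots,v_r$. The proof of this fact proceeds by induction on the dimension of the span. Choose a shortest nonzero vector $v_1$, which exists by discreteness. Then $\Lambda\cap\mathbb Rv_1=\mathbb Zv_1$, and the image of $\Lambda$ in $\mathbb R^2/\mathbb Rv_1$ is discrete; this is seen by reducing modulo $v_1$ into a bounded strip and using that a bounded region contains only finitely many lattice points.

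If $r=2$, the map $\mathbb R^2/\Lambda\cong\mathbb R^2/\mathbb Z^2$ is induced by the linear isomorphism sending $v_i\mapsto e_i$. The quotient is therefore a torus, hence compact.

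If $r\le1$, choose a nonzero linear functional $\ell$ vanishing on $\operatorname{span}\Lambda$ (any $\ell\neq0$ works if $\Lambda=0$). Then $\ell$ descends to a continuous surjection $\mathbb R^2/\Lambda\to\mathbb R$. Its image is unbounded, so the quotient cannot be compact. This gives both directions of (1), together with the statement about rank at most one.

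For part (2), I would argue directly with the projection maps. Since $G$ acts freely and properly discontinuously, so does $H$. Both $\pi_G:Y\to Y/G$ and $\pi_H:Y\to Y/H$ are then covering maps, and the induced map $p:Y/H\to Y/G$, $Hy\mapsto Gy$, is well defined and continuous.

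To see that $p$ is a covering, take an evenly covered open set $U\subset Y/G$ that is connected, so that $\pi_G^{-1}(U)=\bigsqcup_{g\in G}g\widetilde U$ with each $g\widetilde U$ mapped homeomorphically onto $U$. The $H$-orbits of these sheets correspond to right cosets $Hg$. Hence $p^{-1}(U)=\bigsqcup_{Hg\in H\backslash G}\pi_H(g\widetilde U)$, and each piece maps homeomorphically onto $U$. Consequently every fiber of $p$ is in bijection with $H\backslash G$, so it has cardinality $[G:H]$. The degree statement follows immediately.

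For the last equivalence, assume $Y/G$ is compact. If $[G:H]<\infty$, then $Y/H$ is compact, because a finite-sheeted covering of a compact space is compact. Cover $Y/G$ by finitely many evenly covered connected sets whose closures are compact and still lie inside evenly covered sets; the preimage is then a finite union of compact sets.

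Conversely, suppose $[G:H]=\infty$. Take a point $x$ of $Y/G$ and an evenly covered neighbourhood $U$ of $x$. The fiber $p^{-1}(x)$ is infinite, and it is closed and discrete in $Y/H$, since the sheets over $U$ are disjoint open sets each meeting the fiber exactly once. An infinite closed discrete subset cannot lie in a compact space, so $Y/H$ is not compact.

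The only step requiring genuine care is the discreteness of the projected group in the structure theorem of part (1). Since that theorem is standard, I would cite it rather than reprove it, and spend the care instead on checking the identification $p^{-1}(U)\leftrightarrow H\backslash G$ in part (2).
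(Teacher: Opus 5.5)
Your proposal is correct and follows essentially the same route as the paper. In (1) you use the shortest-vector argument with a discrete projected group, and an unbounded linear functional (the paper uses orthogonal projection onto $V^\perp$) when the rank is at most one. In (2) you identify the fibers of $Y/H\to Y/G$ with $H\backslash G$ and use the closed-discrete-fiber argument; your evenly-covered-set verification only supplies details the paper asserts without proof.

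One small correction: closedness of $p^{-1}(x)$ follows from continuity of $p$ and the fact that $\{x\}$ is closed in the Hausdorff quotient $Y/G$. The disjointness of the sheets gives only discreteness.
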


\begin{proof}
Let
\[
V=\operatorname{span}_{\mathbb R}\Lambda.
\]
Since $\Lambda$ is discrete, every bounded subset of $\mathbb R^2$
meets $\Lambda$ in finitely many points. If $\dim V=0$, then
$\Lambda=\{0\}$. If $\dim V=1$, choosing a shortest nonzero vector
$v_1\in\Lambda$ gives
\[
\Lambda=\mathbb Zv_1.
\]

Suppose that $\dim V=2$. Choose a shortest nonzero vector
$v_1\in\Lambda$, and let
\[
\pi:\mathbb R^2\longrightarrow v_1^\perp
\]
be the orthogonal projection. The subgroup $\pi(\Lambda)$ is discrete;
otherwise, reducing elements of $\Lambda$ modulo $\mathbb Zv_1$ would
produce infinitely many points of $\Lambda$ in a bounded set. Hence
\[
\pi(\Lambda)=\mathbb Z\pi(v_2)
\]
for some $v_2\in\Lambda\setminus\mathbb Rv_1$, and therefore
\[
\Lambda=\mathbb Zv_1\oplus\mathbb Zv_2.
\]
The parallelogram spanned by $v_1$ and $v_2$ is then a compact
fundamental domain. If $\dim V<2$, the orthogonal projection onto
$V^\perp$ descends to an unbounded continuous map on
$\mathbb R^2/\Lambda$, so the quotient is noncompact. This proves
the first assertion.

For the second assertion, freeness and proper discontinuity imply that
the natural map $Y/H\to Y/G$ is a covering. Each fiber is naturally
identified with the coset space $H\backslash G$, and hence has
cardinality $[G:H]$.

If $[G:H]<\infty$, then $Y/H$ is a finite cover of the compact manifold
$Y/G$ and is therefore compact. Conversely, if $Y/H$ is compact, each
fiber is a closed discrete subset of a compact space and hence is
finite. Thus $[G:H]<\infty$.
\end{proof}

We now combine the local-symmetry rigidity, the developing-map
construction, and Lemmas~\ref{lem:covering_criterion}
and~\ref{lem:planar_quotient_groups} to prove the compactness
criterion.

\begin{thm}
\label{thm:positive_c_compactness}
Let $0<c<1/2$ and $h\geq0$, and let $q(h,c)$ be the normalized
longitude increment defined in Definition~\ref{def:q}. Then
$\operatorname{Hel}_c^h$ is compact if and only if
\[
h\in\mathbb Q,
\qquad
q(h,c)\in\mathbb Q.
\]
\end{thm}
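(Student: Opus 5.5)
We prove the two implications separately, using the translation group
\[
\Lambda:=\{(mL,2\pi n): m,n\in\mathbb Z,\ mq(h,c)+nh\in\mathbb Z\}.
\]
By \eqref{eq:global_translation_period_condition}, $\Lambda$ is contained in $\operatorname{Per}(X_c^h)$. By Lemma~\ref{lem:local_transition_rigidity}, every global translation symmetry has this form, so in fact $\Lambda=\operatorname{Per}(X_c^h)$. Moreover, $\Lambda$ is a subgroup of the discrete lattice $L\mathbb Z\oplus2\pi\mathbb Z$, and is therefore discrete.

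\emph{Sufficiency.} Suppose $h=a/b$ and $q=q(h,c)=a'/b'$ are rational. Then $(b'L,0)$ and $(0,2\pi b)$ both lie in $\Lambda$, so $\Lambda$ has rank two. By Lemma~\ref{lem:planar_quotient_groups}(1), $M=\mathbb R^2/\Lambda$ is a compact torus. Since $X_c^h$ is $\Lambda$-invariant, it descends to a smooth map $f:M\to\mathbb S^3$. Local inverses of the projection $\mathbb R^2\to M$ supply the charts $\Phi_p$ with $f|_{U_p}=X_c^h\circ\Phi_p$. Finally, $f$ is an immersion by Proposition~\ref{prop:global_minimal_immersion}.

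\emph{Necessity.} Let $(M,f)$ be a compact realization, and let $\pi:\widetilde M\to M$ be its universal cover, with $\widetilde f=f\circ\pi$. Pulling back the charts $\Phi_p$ along $\pi$ and restricting to connected pieces gives charts $D_\alpha$ on $\widetilde M$ with $\widetilde f=X\circ D_\alpha$. On each connected component of an overlap, the transition $D_\beta\circ D_\alpha^{-1}$ is a $C^1$ diffeomorphism between connected open sets satisfying $X\circ F=X$. By Lemma~\ref{lem:local_transition_rigidity}, it is a translation by a vector of $\Lambda\subset\operatorname{Per}(X)$. Lemma~\ref{lem:translational_developing_map} then produces a local diffeomorphism $D:\widetilde M\to\mathbb R^2$ with $\widetilde f=X\circ D$.

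Next, equip $M$ with the induced metric $f^*g_{\mathbb S^3}$ and $\mathbb R^2$ with $X^*g_{\mathbb S^3}$. With the pulled-back metric on $\widetilde M$, the map $D$ is a local isometry. Since $M$ is compact, $\widetilde M$ is complete. We also need $(\mathbb R^2,X^*g)$ to be connected, which it is. Lemma~\ref{lem:covering_criterion} then shows that $D$ is a covering map, and since $\mathbb R^2$ is simply connected, $D$ is a diffeomorphism.

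For each deck transformation $\gamma$ of $\widetilde M$, the map $D\gamma D^{-1}$ is a global diffeomorphism of $\mathbb R^2$ preserving $X$. By Lemma~\ref{lem:local_transition_rigidity}, it is a translation by an element of $\Lambda$. So the deck group is identified with a subgroup $\Gamma\le\Lambda$ acting freely and properly discontinuously, and $M\cong\mathbb R^2/\Gamma$. Compactness of $M$ together with Lemma~\ref{lem:planar_quotient_groups}(1) forces $\Gamma$, and hence $\Lambda$, to have rank two.

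It remains to extract rationality from rank two. Take two independent elements $(m_1L,2\pi n_1)$ and $(m_2L,2\pi n_2)$ of $\Lambda$, with $\Delta=m_1n_2-m_2n_1\ne0$. Then
\[
m_1q+n_1h=k_1,\qquad m_2q+n_2h=k_2
\]
for integers $k_1,k_2$. Solving this integer linear system by Cramer's rule gives $q,h\in\frac1\Delta\mathbb Z\subset\mathbb Q$.

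The main obstacle is the metric step that makes $D$ a diffeomorphism. Completeness of $\widetilde M$ needs to be stated carefully as the pullback of a complete metric on the compact $M$. One also has to check that $D$ is a local isometry for these metrics, so that the norm inequality in Lemma~\ref{lem:covering_criterion} holds with equality. Beyond that, the transitions must be handled on each connected component of an overlap, which is exactly the form in which Lemma~\ref{lem:translational_developing_map} requires them.
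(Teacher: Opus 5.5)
Your proposal is correct and follows essentially the same route as the paper. In necessity you build the developing map from Lemma~\ref{lem:local_transition_rigidity} and Lemma~\ref{lem:translational_developing_map}, use the covering criterion to identify $\widetilde M$ with $\mathbb R^2$, place the deck group inside the period lattice, get rank two from compactness, and read off rationality from a nonsingular integer system; sufficiency uses two independent periods. The differences are minor: you quotient by the full period lattice $\Lambda$ instead of the rectangular sublattice, and you should say explicitly that the chart domains are shrunk to be evenly covered, so that each $D_\alpha=\Phi_\alpha\circ\pi|_{U_\alpha}$ is a diffeomorphism.
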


\begin{proof}
Write $X=X_c^h$. We first prove necessity. Let $(M,f)$ be a compact
realization of $\operatorname{Hel}_c^h$ in Definition~\ref{def:compactness}, and let
\[
p:\widetilde M\longrightarrow M
\]
be the universal covering. After shrinking the parameter charts of $M$
if necessary, assume that each chart domain $V_\alpha$ is evenly covered by $p$.
For each compatible parameter chart
\[
\Phi_\alpha:V_\alpha\longrightarrow W_\alpha\subset\mathbb R^2
\]
and each connected component $U_\alpha$ of $p^{-1}(V_\alpha)$, set
\[
D_\alpha
:=
\Phi_\alpha\circ p|_{U_\alpha}
:
U_\alpha\longrightarrow W_\alpha.
\]
Then $D_\alpha$ is a diffeomorphism and by Definition~\ref{def:compactness},
\[
f\circ p=X\circ D_\alpha
\qquad\text{on }U_\alpha.
\]

Let $C$ be a connected component of $U_\alpha\cap U_\beta$. The
transition map
\[
F_{\beta\alpha}
:=
D_\beta\circ(D_\alpha|_C)^{-1}
:
D_\alpha(C)\longrightarrow D_\beta(C)
\]
satisfies
\[
X\circ F_{\beta\alpha}=X.
\]
By Lemma~\ref{lem:local_transition_rigidity} and
\eqref{eq:global_translation_period_condition},
$F_{\beta\alpha}$ is a translation by an element of
$\operatorname{Per}(X)$. Hence the hypotheses of
Lemma~\ref{lem:translational_developing_map} are satisfied, and there
exists a local diffeomorphism
\[
D:\widetilde M\longrightarrow\mathbb R^2
\]
such that
\[
f\circ p=X\circ D.
\]

With
\[
g_M=f^*g_{\mathbb S^3},
\qquad
g=X^*g_{\mathbb S^3},
\]
we have
\[
D^*g=p^*g_M.
\]
Thus $D$ is a local isometry. Since $M$ is compact, $(M,g_M)$ is
complete. Since $p$ is a Riemannian covering,
$(\widetilde M,p^*g_M)$ is complete as well. By
Lemma~\ref{lem:covering_criterion}, $D$ is a covering of
$\mathbb R^2$. Since both spaces are simply connected, $D$ is a
diffeomorphism.

Set
\[
\pi:=p\circ D^{-1}:\mathbb R^2\longrightarrow M
\]
and let $\Gamma$ be its deck group. For every $\gamma\in\Gamma$,
\[
X\circ\gamma
=
f\circ\pi\circ\gamma
=
f\circ\pi
=
X.
\]
Lemma~\ref{lem:local_transition_rigidity} therefore gives
\[
\Gamma
\leq
\left\{
(s,t)\mapsto(s+mL,t+2\pi n):
(m,n)\in\mathbb Z^2,\ 
mq(h,c)+nh\in\mathbb Z
\right\}.
\]

The deck action is properly discontinuous, so $\Gamma$ is a discrete
subgroup of the translation group. Since
$M\cong\mathbb R^2/\Gamma$ is compact,
Lemma~\ref{lem:planar_quotient_groups} shows that $\Gamma$ is a
rank-two lattice. Hence it contains two
linearly independent period vectors
\[
(m_1L,2\pi n_1),
\qquad
(m_2L,2\pi n_2).
\]
Thus, for some $k_1,k_2\in\mathbb Z$,
\[
\begin{pmatrix}
m_1&n_1\\
m_2&n_2
\end{pmatrix}
\begin{pmatrix}
q(h,c)\\
h
\end{pmatrix}
=
\begin{pmatrix}
k_1\\
k_2
\end{pmatrix}.
\]
The coefficient matrix has nonzero determinant, so both
\[
q(h,c),h\in\mathbb Q.
\]

Conversely, suppose that both $q(h,c),h\in\mathbb Q$. Choose positive
integers $M_1,M_2$ such that
\[
M_1q(h,c)\in\mathbb Z,
\qquad
M_2h\in\mathbb Z.
\]
By \eqref{eq:global_translation_period_condition}, the vectors
\[
(M_1L,0),
\qquad
(0,2\pi M_2)
\]
are periods of $X$. Hence $X$ descends to a minimal immersion of the
compact torus
\[
\mathbb R^2/
\bigl(
\mathbb Z(M_1L,0)
\oplus
\mathbb Z(0,2\pi M_2)
\bigr)
\]
into $\mathbb S^3$, giving a compact realization of
$\operatorname{Hel}_c^h$.
\end{proof}

We next treat the Lawson case.

\begin{thm}
\label{thm:lawson_compactness}
Let $h>0$. Then $\operatorname{Hel}_0^h$ is compact if and only if
$h\in\mathbb Q$.
\end{thm}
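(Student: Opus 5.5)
The plan mirrors the proof of Theorem~\ref{thm:positive_c_compactness}, with Lemma~\ref{lem:lawson_complete_local_symmetry} replacing Lemma~\ref{lem:local_transition_rigidity}. The new feature is that, for rational $h$, transition maps may be orientation-reversing. The plan is to handle that feature only in the sufficiency direction. For necessity it is avoided altogether, since irrationality leaves only translations.

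For sufficiency, write $h=j/\nu$ with $j,\nu$ positive integers. We exhibit two explicit period vectors. Take $(2\pi,0)$, which is a period because $\cos$ and $\sin$ are $2\pi$-periodic. Take also $(0,2\pi\nu)$: then $e^{ih(t+2\pi\nu)}=e^{iht}e^{2\pi i j}=e^{iht}$ and $e^{i(t+2\pi\nu)}=e^{it}$. These vectors are linearly independent, so $X_0^h$ descends to the torus
\[
\mathbb R^2/\bigl(\mathbb Z(2\pi,0)\oplus\mathbb Z(0,2\pi\nu)\bigr).
\]
By Proposition~\ref{prop:global_minimal_immersion}, $X_0^h$ is a smooth immersion, so this quotient gives a compact realization in the sense of Definition~\ref{def:compactness}.

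For necessity, assume $h$ is irrational and suppose $(M,f)$ is a compact realization. We repeat the construction from the proof of Theorem~\ref{thm:positive_c_compactness}. Take the universal cover $p:\widetilde M\to M$, with charts $D_\alpha=\Phi_\alpha\circ p|_{U_\alpha}$ on evenly covered components. By Corollary~\ref{cor:lawson_irrational_rigidity}, every transition map $D_\beta\circ(D_\alpha|_C)^{-1}$ is a translation $(s,t)\mapsto(s+2\pi r,t)$. This translation lies in $\operatorname{Per}(X_0^h)$, so Lemma~\ref{lem:translational_developing_map} yields a local diffeomorphism $D:\widetilde M\to\mathbb R^2$ with $f\circ p=X_0^h\circ D$.

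Since $h>0$, the metric $g=(X_0^h)^*g_{\mathbb S^3}$ is a genuine Riemannian metric on $\mathbb R^2$, and $D$ is a local isometry from the complete surface $(\widetilde M,p^*f^*g_{\mathbb S^3})$ onto $(\mathbb R^2,g)$. Lemma~\ref{lem:covering_criterion} therefore makes $D$ a covering, and since both spaces are simply connected it is a diffeomorphism. The deck group $\Gamma$ of $\pi=p\circ D^{-1}$ satisfies $X_0^h\circ\gamma=X_0^h$. Applying Corollary~\ref{cor:lawson_irrational_rigidity} to each $\gamma$ on all of $\mathbb R^2$ puts $\Gamma$ inside $\{(s,t)\mapsto(s+2\pi r,t): r\in\mathbb Z\}$. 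This is a translation group of rank at most one. Lemma~\ref{lem:planar_quotient_groups}(1) then shows that $\mathbb R^2/\Gamma\cong M$ is noncompact, a contradiction.

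The only step needing care is the hypothesis of Lemma~\ref{lem:translational_developing_map}: each transition map must be a \emph{constant} translation on every connected component of an overlap. Corollary~\ref{cor:lawson_irrational_rigidity} supplies exactly this on connected domains, so the obstacle is mild. The remaining care is just in checking that irrationality rules out both $\varepsilon=-1$ and $t$-shifts, so no glide reflections arise.
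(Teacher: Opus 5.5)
Your proposal is correct and follows the paper's proof essentially step for step. For sufficiency you use the same explicit lattice $\langle(2\pi,0),(0,2\pi\nu)\rangle_{\mathbb Z}$. For necessity you use the same developing-map construction, with Corollary~\ref{cor:lawson_irrational_rigidity} forcing every transition map and every deck transformation to be an $s$-translation by $2\pi\mathbb Z$, which contradicts compactness via Lemma~\ref{lem:planar_quotient_groups}. One small point: your opening claim that orientation-reversing transitions must be ``handled'' in the sufficiency direction is unnecessary, since exhibiting the translation lattice already settles that direction.
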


\begin{proof}
Suppose first that $h=j/\nu\in\mathbb Q$ is written in lowest
terms. By
\eqref{eq:global_lawson_parametrization}, the vectors
\[
(2\pi,0),
\qquad
(0,2\pi\nu)
\]
are periods of $X_0^h$. Hence $X_0^h$ descends to a minimal immersion
of the compact torus
\[
\mathbb R^2/
\bigl(
\mathbb Z(2\pi,0)
\oplus
\mathbb Z(0,2\pi\nu)
\bigr)
\]
into $\mathbb S^3$. Therefore $\operatorname{Hel}_0^h$ is compact.

Conversely, suppose that $h$ is irrational and that
$\operatorname{Hel}_0^h$ is compact. Let $(M,f)$ be a compact realization, and let
\[
p:\widetilde M\longrightarrow M
\]
be the universal covering. As in the necessity part of
Theorem~\ref{thm:positive_c_compactness}, lift a compatible parameter
atlas to obtain connected open sets $U_\alpha\subset\widetilde M$ and
diffeomorphisms
\[
D_\alpha:U_\alpha\longrightarrow W_\alpha\subset\mathbb R^2
\]
such that
\[
f\circ p=X_0^h\circ D_\alpha.
\]

Let $C$ be a connected component of $U_\alpha\cap U_\beta$. The
transition map
\[
D_\beta\circ(D_\alpha|_C)^{-1}
\]
is a local symmetry of $X_0^h$. By
Corollary~\ref{cor:lawson_irrational_rigidity}, it is a translation by
a vector of the form $(2\pi r,0)$, which is a period of $X_0^h$.
Lemma~\ref{lem:translational_developing_map} therefore gives a local
diffeomorphism
\[
D:\widetilde M\longrightarrow\mathbb R^2
\]
such that
\[
f\circ p=X_0^h\circ D.
\]
The completeness and covering argument in the proof of
Theorem~\ref{thm:positive_c_compactness} shows that $D$ is a
diffeomorphism.

Identifying $\widetilde M$ with $\mathbb R^2$ through $D$, let
$\Gamma$ be the resulting deck group. Every $\gamma\in\Gamma$ satisfies
\[
X_0^h\circ\gamma=X_0^h.
\]
By Corollary~\ref{cor:lawson_irrational_rigidity}, every such
transformation has the form
\[
\gamma(s,t)=(s+2\pi r,t),
\qquad
r\in\mathbb Z.
\]
Thus $\Gamma$ is a discrete translation group of rank at most one,
which contradicts the compactness of $M\cong\mathbb R^2/\Gamma$ by
Lemma~\ref{lem:planar_quotient_groups}.
Therefore $h\in\mathbb Q$.
\end{proof}

Combining Theorems~\ref{thm:positive_c_compactness}
and~\ref{thm:lawson_compactness} yields the following complete
compactness criterion.
\begin{thm}\label{thm:compactness}
Let $c\in[0,1/2)$ and $h\geq0$. For $0<c<1/2$, let $q(h,c)$ be the
normalized longitude increment defined by \eqref{eq:q_definition}.
Then:
\begin{enumerate}
\item The parameter value $(h,c)=(0,0)$ corresponds to the compact totally
geodesic sphere.
\item If $c=0$ and $h>0$, then $\operatorname{Hel}_0^h$ is compact if and only if $h\in\mathbb Q$.
\item If $0<c<1/2$, then $\operatorname{Hel}_c^h$ is compact if and only if
$h\in\mathbb Q$ and $q(h,c)\in\mathbb Q$.
This includes the spherical catenoid subfamily $h=0$, for which the
condition reduces to $q(0,c)\in\mathbb Q$.
\end{enumerate}
\end{thm}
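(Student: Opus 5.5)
The theorem is a combination of results already established, so the proof will assemble them case by case. We split according to the value of $(h,c)$.

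For item (1), $(h,c)=(0,0)$, we appeal directly to Definition~\ref{def:global_parametrizations} and Definition~\ref{def:compactness}. There $\operatorname{Hel}_0^0$ is set equal to $X_0^0(\mathbb R^2)=\mathbb S^2$, the totally geodesic great sphere. This is compact by convention, and no argument beyond noting that $X_0^0$ is the polar parametrization of $\mathbb S^2$ is needed.

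For item (2), $c=0$ and $h>0$, the statement is exactly Theorem~\ref{thm:lawson_compactness}. For completeness we recall its two directions. Sufficiency uses the periods $(2\pi,0)$ and $(0,2\pi\nu)$ when $h=j/\nu$. Necessity uses Corollary~\ref{cor:lawson_irrational_rigidity}: for irrational $h$ it restricts every local symmetry to $(s,t)\mapsto(s+2\pi r,t)$, so the developing map of Lemma~\ref{lem:translational_developing_map} exists. Lemma~\ref{lem:covering_criterion} then turns it into a diffeomorphism, and the deck group has rank at most one. This contradicts compactness by Lemma~\ref{lem:planar_quotient_groups}.

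For item (3), $0<c<1/2$, the equivalence is Theorem~\ref{thm:positive_c_compactness}, valid for every $h\ge0$. The only remark to add concerns the catenoid subfamily $h=0$. There the condition $h\in\mathbb Q$ is automatic, so the criterion reduces to $q(0,c)\in\mathbb Q$. Equivalently, by \eqref{eq:global_translation_period_condition} with $h=0$, the vector $(0,2\pi)$ is always a period and the only extra closing condition comes from the $s$-direction.

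There is no genuine obstacle, since all the work is contained in the cited theorems. The only point needing care is bookkeeping: the three cases must exhaust $c\in[0,1/2)$, $h\ge0$, and the hypothesis $(h,c)\ne(0,0)$ in Definition~\ref{def:compactness} must match the separate convention for $\operatorname{Hel}_0^0$.
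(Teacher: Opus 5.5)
Your proposal is correct and matches the paper: the theorem is obtained by combining Theorem~\ref{thm:lawson_compactness} for $c=0$, $h>0$ and Theorem~\ref{thm:positive_c_compactness} for $0<c<1/2$, with $(h,c)=(0,0)$ handled by the convention in Definitions~\ref{def:global_parametrizations} and~\ref{def:compactness}. Your reduction for $h=0$, that $(0,2\pi)$ is always a period by \eqref{eq:global_translation_period_condition}, is also correct.
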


This proves Theorem~\ref{mainthm:compactness}.

\begin{rem}
In the rotational case $h=0$, write
\[
2c=\sin\beta,
\qquad
0<\beta<\frac{\pi}{2}.
\]
Then
\[
\mathcal K_c^0(z)=-\frac{c}{z}
=-\frac{\sin\beta}{2z},
\]
so, up to a translation of the arc-length parameter, the reconstructed
profile is the spherical catenary $\mathcal C_\beta$ of
\cite[Section~3.2]{Castro2024}. Its height function has minimal positive
period $L=\pi$, and the longitude equation agrees with
\cite[Eqs.~(3.12)--(3.14)]{Castro2024}. With the normalization
$\lambda_\beta(0)=0$ used there, it follows that
\[
q(0,c)
=
\frac{\lambda_c^0(s+\pi)-\lambda_c^0(s)}{2\pi}
=
\frac{\lambda_\beta(\pi)}{2\pi}
= T(\beta),
\]
where $T(\beta)$ is the quantity defined in
\cite[Eq.~(3.15)]{Castro2024}. Thus
\[
q(0,c)\in\mathbb Q
\quad\Longleftrightarrow\quad
T(\beta)\in\mathbb Q.
\]
Hence the rotational specialization of
Theorem~\ref{thm:compactness} is exactly the classical condition $T(\beta)\in\mathbb Q$ for a spherical catenary to be closed. By
\cite[Corollary~3.5 and Theorem~5.2]{Castro2024}, the corresponding compact
rotational minimal surfaces are the Otsuki tori. 
\end{rem}

\subsection{Range of the longitude increment}

We now determine the range of $q(h,c)$ as $c$ varies. Together with
Theorem~\ref{thm:compactness}, this yields existence results for compact
members with $0<c<1/2$. Fix $h_0\geq0$. Combining \eqref{eq:q_definition} and
\eqref{eq:longitude_drift}, and then substituting \eqref{eq:Kc},
\eqref{eq:Fhc_factorization}, and \eqref{eq:height_extrema}, gives
\begin{equation}\label{eq:q_explicit_integral}
q(h_0,c)
=
\frac{c}{\pi}
\int_{\sqrt{(1-\sqrt{1-4c^2})/2}}^{\sqrt{(1+\sqrt{1-4c^2})/2}}
\frac{\sqrt{h_0^2+(1-h_0^2)z^2}}
{z(1-z^2)\sqrt{z^2-z^4-c^2}}\,\dif z.
\end{equation}

\begin{lem}\label{lem:q_range}
Let $q(h,c)$ be the normalized longitude increment defined by
\eqref{eq:q_definition}. For every fixed $h_0\geq0$, the function
$c\mapsto q(h_0,c)$ is continuous on $(0,1/2)$. Moreover,
\begin{equation}\label{eq:q_endpoint_limits}
\lim_{c\to0^+}q(h_0,c)=\frac{1+h_0}{2},
\qquad
\lim_{c\to1/2^-}q(h_0,c)=\frac{\sqrt{2(1+h_0^2)}}{2}.
\end{equation}
If $h_0\neq1$, then the range of $q(h_0,\cdot)$ on $(0,1/2)$ is the open
interval
\begin{equation}\label{eq:q_range_interval}
\bigl\{q(h_0,c):0<c<1/2\bigr\}
=
\left(
\frac{1+h_0}{2},
\frac{\sqrt{2(1+h_0^2)}}{2}
\right).
\end{equation}
If $h_0=1$, then
\begin{equation}\label{eq:q_pitch_one}
q(1,c)\equiv1
\end{equation}
on $(0,1/2)$, and the range is the singleton $\{1\}$.
\end{lem}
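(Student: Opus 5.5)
The plan is to substitute $u=z^2$ in \eqref{eq:q_explicit_integral} and then read $q(h_0,c)$ as the average of a concave function against an explicit probability measure. With $a=z_{\min}^2$ and $b=z_{\max}^2$ we have $a+b=1$, $ab=c^2$ and $z^2-z^4-c^2=(u-a)(b-u)$, so
\[
q(h_0,c)=\frac{c}{2\pi}\int_a^b \frac{f(u)}{u(1-u)\sqrt{(u-a)(b-u)}}\,\dif u,
\qquad f(u):=\sqrt{h_0^2+(1-h_0^2)u}.
\]
Set $r=\sqrt{1-4c^2}$ and $u=\tfrac12-\tfrac r2\cos\theta$ with $\theta\in[0,\pi]$. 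This moves the integral to a fixed domain with integrand $f(u)/(u(1-u))$. That integrand is continuous in $(\theta,c)$ and locally bounded for $c\in(0,1/2)$, since $u\in[a,b]\subset(0,1)$. Continuity of $c\mapsto q(h_0,c)$ then follows by dominated convergence.

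Next I use the partial fractions $\frac1{u(1-u)}=\frac1u+\frac1{1-u}$ and the classical integral $\int_a^b \frac{\dif u}{u\sqrt{(u-a)(b-u)}}=\pi/\sqrt{ab}=\pi/c$. Its mirror image, with $(1-a)(1-b)=ab=c^2$, gives $\int_a^b\frac{\dif u}{(1-u)\sqrt{(u-a)(b-u)}}=\pi/c$. Hence
\[
\dif\mu_c:=\frac{c}{2\pi}\,\frac{\dif u}{u(1-u)\sqrt{(u-a)(b-u)}}
\]
is a probability measure on $[a,b]$ and $q(h_0,c)=\int f\,\dif\mu_c$. The case $h_0=1$ is then immediate: $f\equiv1$, so $q(1,c)\equiv1$, which is \eqref{eq:q_pitch_one}. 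The same identities give the mean of $\mu_c$: since $u\cdot\frac1{u(1-u)}=\frac1{1-u}$, we get $\int u\,\dif\mu_c=\frac{c}{2\pi}\cdot\frac{\pi}{c}=\tfrac12$.

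For the bounds, assume $h_0\neq1$. Then $f$ is strictly concave on $[0,1]$, with $f(0)=h_0$ and $f(1)=1$. First, $f$ lies strictly above its chord $(1-u)h_0+u$ on $(0,1)$, and $\mu_c$ has positive density on $(a,b)\subset(0,1)$. Integrating the chord and using mean $\tfrac12$ gives $q>(1+h_0)/2$. Second, strict Jensen, with $\mu_c$ not a point mass, gives $q<f(1/2)=\sqrt{(1+h_0^2)/2}=\sqrt{2(1+h_0^2)}/2$. So the range lies in the open interval \eqref{eq:q_range_interval}.

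It then remains to compute the endpoint limits \eqref{eq:q_endpoint_limits}. Continuity and the intermediate value theorem will then show the range is exactly the open interval, and no monotonicity in $c$ is needed.

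As $c\to1/2^-$, we have $a,b\to\tfrac12$ and $\mu_c$ concentrates at $\tfrac12$, so $q\to f(\tfrac12)$. As $c\to0^+$, split $\mu_c=\mu_c^{(0)}+\mu_c^{(1)}$ according to the two partial-fraction terms, each of mass $\tfrac12$. The piece $\mu_c^{(0)}$, with density proportional to $1/u$, should concentrate at $u=0$ and give $\tfrac12 f(0)=h_0/2$. By symmetry $\mu_c^{(1)}$ concentrates at $u=1$ and gives $\tfrac12 f(1)=\tfrac12$, so the total is $(1+h_0)/2$.

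The main obstacle is making this concentration rigorous. I would first try the substitution $u=a+(b-a)\sin^2(\theta/2)$ and show that for each fixed $\delta>0$ the $\mu_c^{(0)}$-mass of $[\delta,b]$ tends to $0$. This should hold because on $[\delta,b]$ the density is bounded by $\frac{c}{2\pi\delta}\cdot\frac{1}{\sqrt{(u-a)(b-u)}}$, whose integral is at most $c/(2\delta)\to0$. An alternative is to compute $\int u\,\dif\mu_c^{(0)}=\frac{c}{2\pi}\int\frac{\dif u}{\sqrt{(u-a)(b-u)}}=c/2\to0$ directly. This forces concentration of the mass-$\tfrac12$ measure at $0$, and the uniform continuity of $f$ on $[0,1]$ then finishes the argument.
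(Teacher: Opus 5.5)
Your proof is correct. It follows the same overall skeleton as the paper's proof, but the key steps use different tools.

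Both arguments move to a fixed integration domain to get continuity, prove strict bounds, compute the two endpoint limits, and finish with the intermediate value theorem.

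The paper works throughout in the angle variable. It folds $[0,\pi]$ onto $[0,\pi/2]$ via $\theta\mapsto\pi-\theta$ and normalizes with the explicit integral $\int_0^{\pi/2}(1-\delta^2\cos^2\theta)^{-1}\,\dif\theta=\pi/(4c)$. It then gets both strict bounds from the monotonicity of the two-point sum $\mathcal R(y)=\sqrt{A_0-y}+\sqrt{A_0+y}$ on $[0,|B_0|]$. For the limit $c\to0^+$, it rescales by $\tan\theta=2cv$, which produces a Cauchy-type integral handled by dominated convergence.

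You instead read the normalized weight as a probability measure $\mu_c$ on $[a,b]$ with mean $\tfrac12$, using partial fractions and the classical $\pi/\sqrt{ab}$ integral. Both strict bounds then come at once from strict concavity of $f$: the chord inequality gives the lower bound and Jensen gives the upper. This is more conceptual. It shows directly why the bounds are $\tfrac12\bigl(f(0)+f(1)\bigr)$ and $f(\tfrac12)$, and it avoids the case split on the sign of $B_0$.

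Your $c\to0^+$ limit is also more transparent than the paper's rescaling. Splitting $\mu_c$ into two half-mass pieces that concentrate at $0$ and $1$, with the Markov-type bound $\mu_c^{(0)}([\delta,b])\le c/(2\delta)$, shows where the limiting value comes from; the paper's route is more computational but equally short.

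The paper's pairing $\theta\leftrightarrow\pi-\theta$ is exactly the symmetry $u\mapsto1-u$ of $\mu_c$, which holds because $a+b=1$. So you could read off the mean $\tfrac12$ from that symmetry instead of from the partial-fraction computation.
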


\begin{proof}
We first rewrite $q(h_0,c)$ as an integral over a fixed interval. This
form gives the two endpoint limits by dominated convergence and also yields
strict pointwise bounds between the limiting values.

Set
\[
\delta=\sqrt{1-4c^2},\qquad
A_0=\frac{1+h_0^2}{2},\qquad
B_0=\frac{1-h_0^2}{2}.
\]
Put $t=z^2$, and then from $z^2-z^4-c^2\geq0$ in \eqref{eq:q_explicit_integral} we change the variables by
\[
t=\frac{1-\delta\cos\theta}{2},\qquad 0\le\theta\le\pi.
\]
The elementary identities needed for this change of variables are
\[
    t(1-t)=\frac{1-\delta^2\cos^2\theta}{4},
    \qquad
    t-t^2-c^2=\frac{\delta^2\sin^2\theta}{4},
    \qquad
    \dif t=\frac{\delta}{2}\sin\theta\,\dif\theta.
\]
Moreover
\[
    h_0^2+(1-h_0^2)t=A_0-B_0\delta\cos\theta.
\]
Since
\[
    \frac{\dif z}{z(1-z^2)\sqrt{z^2-z^4-c^2}}
    =
    \frac{\dif t}{2t(1-t)\sqrt{t-t^2-c^2}},
\]
the integral \eqref{eq:q_explicit_integral}  becomes
\[
q(h_0,c)=
\frac{2c}{\pi}
\int_0^\pi
\frac{\sqrt{A_0-B_0\delta\cos\theta}}
{1-\delta^2\cos^2\theta}
\,\dif\theta.
\]
Using the symmetry $\cos(\pi-\theta)=-\cos\theta$, this is equivalently
\begin{equation}\label{eq:q_std}
q(h_0,c)
=
\frac{2c}{\pi}
\int_0^{\pi/2}
\frac{
\sqrt{A_0-B_0\delta\cos\theta}
+
\sqrt{A_0+B_0\delta\cos\theta}
}
{1-\delta^2\cos^2\theta}
\,\dif\theta.
\end{equation}
On every compact subinterval of $(0,1/2)$, the denominator in
\eqref{eq:q_std} is bounded away from zero and the integrand depends
continuously on $c$. Hence $q(h_0,c)$ is continuous on $(0,1/2)$.

Direct calculations show that
\begin{equation}\label{eq:int_aux}
\int_0^{\pi/2}
\frac{\dif\theta}{1-\delta^2\cos^2\theta}
=
\frac{\pi}{4c}.
\end{equation}

For the limit $c\to0^+$, write $u=\tan\theta$ in
\eqref{eq:q_std} and then set $u=2cv$. This gives
\[
q(h_0,c)
=
\frac1\pi
\int_0^\infty
\frac{
\Sigma(\delta,\arctan(2cv))
}
{1+v^2}\,\dif v,
\]
where
\[
\Sigma(\delta,\theta)
=
\sqrt{A_0-B_0\delta\cos\theta}
+
\sqrt{A_0+B_0\delta\cos\theta}.
\]
As $c\to0^+$, $\delta\to1$ and
\[
\Sigma(\delta,\arctan(2cv))\to \Sigma(1,0)=1+h_0.
\]
Since $\Sigma$ is uniformly bounded, dominated convergence gives
\[
\lim_{c\to0^+}q(h_0,c)
=
\frac1\pi
\int_0^\infty
\frac{1+h_0}{1+v^2}\,\dif v
=
\frac{1+h_0}{2}.
\]

For the limit $c\to1/2^-$, we have $\delta\to0$ and $2c\to1$.
For $\delta\leq1/2$, the denominator in \eqref{eq:q_std} is bounded
below by $3/4$, while the numerator is uniformly bounded. Hence dominated
convergence gives
\[
\lim_{c\to1/2^-}q(h_0,c)
=
\frac1\pi
\int_0^{\pi/2}2\sqrt{A_0}\,\dif\theta
=
\sqrt{A_0}
=
\frac{\sqrt{2(1+h_0^2)}}{2}.
\]

If $h_0=1$, then $A_0=1$ and $B_0=0$, so $\Sigma(\delta,\theta)=2$. Using
\eqref{eq:q_std} and \eqref{eq:int_aux}, we get
\[
q(1,c)=1
\]
for every $0<c<1/2$.

Now assume $h_0\neq1$. Then $B_0\neq0$. We verify the strict pointwise
inequality separately because the sign of $B_0$ changes at $h_0=1$.  For
$0\le y\le |B_0|$, set
\[
\mathcal R(y)=\sqrt{A_0-y}+\sqrt{A_0+y}.
\]
Then
\[
\mathcal R'(y)=\frac{1}{2\sqrt{A_0+y}}-\frac{1}{2\sqrt{A_0-y}}<0
\qquad(0<y<|B_0|),
\]
and
\[
\mathcal R(|B_0|)=1+h_0,
\qquad
\mathcal R(0)=2\sqrt{A_0}.
\]
Thus, whenever $0<|B_0|x<|B_0|$,
\[
1+h_0<\mathcal R(|B_0|x)<2\sqrt{A_0}.
\]
Since
\[
\sqrt{A_0-B_0x}+\sqrt{A_0+B_0x}=\mathcal R(|B_0|x),
\]
this applies to $x=\delta\cos\theta$.  For $0<c<1/2$, we have $0<\delta<1$, and thus the strict
inequalities above hold for all $\theta\in[0,\pi/2]$ except for
$\theta=\pi/2$, which is irrelevant for the integral. Combining this with
\eqref{eq:q_std} and
\eqref{eq:int_aux}, we obtain the strict bounds
\[
\frac{1+h_0}{2}
<
q(h_0,c)
<
\sqrt{A_0}
=
\frac{\sqrt{2(1+h_0^2)}}{2},\quad \text{for}\quad 0<c<1/2.
\]
Since $q(h_0,\cdot)$ is continuous, its image is an interval.  The endpoint limits in \eqref{eq:q_endpoint_limits} give its infimum and
supremum, while the strict inequalities exclude the endpoints. Therefore, for $h_0\neq1$,
\[
q(h_0,(0,1/2))
=
\left(
\frac{1+h_0}{2},
\frac{\sqrt{2(1+h_0^2)}}{2}
\right).
\]
This proves the lemma.
\end{proof}

\begin{coro}\label{cor:infinitely_many_compact}
The two-parameter family
\[
\{\operatorname{Hel}_c^h:h\geq0,\ 0\leq c<1/2\}
\]
contains infinitely many compact members with $0<c<1/2$.
More precisely, for every
\[
h_0\in\mathbb Q\cap[0,\infty),
\qquad
h_0\neq1,
\]
there exist infinitely many $c\in(0,1/2)$ such that
$\operatorname{Hel}_c^{h_0}$ is compact. For $h_0=1$, every
$c\in(0,1/2)$ gives a compact member; the geometric
interpretation of the family with $h=1$ is discussed in
Remark~\ref{rem:exceptional_pitch_h_one}.
\end{coro}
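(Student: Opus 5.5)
The corollary follows by combining the range result, Lemma~\ref{lem:q_range}, with the compactness criterion, Theorem~\ref{thm:compactness}(3). Since $h_0$ is rational throughout, the only remaining condition for compactness of $\operatorname{Hel}_c^{h_0}$ with $0<c<1/2$ is $q(h_0,c)\in\mathbb Q$. The task is therefore to produce infinitely many $c\in(0,1/2)$ with rational $q(h_0,c)$.

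Fix $h_0\in\mathbb Q\cap[0,\infty)$ with $h_0\neq1$. By Lemma~\ref{lem:q_range}, the function $c\mapsto q(h_0,c)$ is continuous on $(0,1/2)$, and its image is the open interval
\[
I_{h_0}=\left(\frac{1+h_0}{2},\frac{\sqrt{2(1+h_0^2)}}{2}\right).
\]
This interval is nondegenerate: for $h_0\neq1$ one has $(1+h_0)^2<2(1+h_0^2)$, since the difference is $(1-h_0)^2>0$. A nonempty open interval contains infinitely many distinct rationals $r_1,r_2,\dots$. For each $r_k$, surjectivity onto $I_{h_0}$ gives some $c_k\in(0,1/2)$ with $q(h_0,c_k)=r_k$. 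Because the values $r_k$ are distinct and $q(h_0,\cdot)$ is a function, the points $c_k$ are pairwise distinct. Theorem~\ref{thm:compactness}(3) then shows that each $\operatorname{Hel}_{c_k}^{h_0}$ is compact. This proves the main assertion, and in particular shows that the two-parameter family contains infinitely many compact members with $0<c<1/2$ (taking, say, $h_0=0$ already suffices).

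For $h_0=1$, equation \eqref{eq:q_pitch_one} gives $q(1,c)=1\in\mathbb Q$ for every $c\in(0,1/2)$. Since $1\in\mathbb Q$, Theorem~\ref{thm:compactness}(3) makes every $\operatorname{Hel}_c^1$ compact.

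There is no real obstacle here. The only points needing care are checking that $I_{h_0}$ is nonempty exactly when $h_0\neq1$, and recording that distinct rational values of $q$ force distinct parameters $c$. The geometric remark about $h=1$ is only a forward reference and is not part of the proof.
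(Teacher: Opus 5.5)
Your proposal is correct and follows the paper's proof essentially verbatim: for $h_0\neq1$, the range result of Lemma~\ref{lem:q_range} supplies infinitely many rational values of $q(h_0,\cdot)$, and distinct values force distinct $c$; for $h_0=1$, $q(1,c)\equiv1$; in both cases the compactness criterion then applies. Your explicit check that the interval is nondegenerate via $(1-h_0)^2>0$ is a harmless addition, since it already follows from the strict bounds in that lemma.
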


\begin{proof}
Fix $h_0\in\mathbb Q\cap[0,\infty)$. We use the two cases in
Lemma~\ref{lem:q_range}. If $h_0=1$, then
\eqref{eq:q_pitch_one} gives $q(1,c)=1$ for every $c\in(0,1/2)$. Hence every $\operatorname{Hel}_c^1$ with $0<c<1/2$ is compact by
Theorem~\ref{thm:positive_c_compactness}.

Now assume $h_0\neq1$. By \eqref{eq:q_range_interval}, the image of
$c\mapsto q(h_0,c)$ on $(0,1/2)$ is the open interval
$I$ given by the right-hand side of \eqref{eq:q_range_interval}.
Since $\mathbb Q$ is dense in $\mathbb R$, the interval $I$ contains
infinitely many distinct rational numbers $r_1,r_2,\ldots$. For each $r_j$,
there exists $c_j\in(0,1/2)$ such that
\[
q(h_0,c_j)=r_j.
\]
Since $h_0\in\mathbb Q$ and $q(h_0,c_j)\in\mathbb Q$,
Theorem~\ref{thm:positive_c_compactness} implies that
$\operatorname{Hel}_{c_j}^{h_0}$ is compact. The values $r_j$ are distinct, so the corresponding values $c_j$ are
distinct.

Thus the family contains infinitely many compact members with $0<c<1/2$.
\end{proof}

\begin{rem}\label{rem:exceptional_pitch_h_one}
The one-parameter family
\[
\{\operatorname{Hel}_c^1:0<c<1/2\}
\]
appearing in Corollary~\ref{cor:infinitely_many_compact} does not consist
of geometrically distinct surfaces. By
\cite[Remark~5.8]{Castro2024}, the immersions with $h=1$ are Hopf minimal
surfaces and give helicoidal parametrizations of the Clifford torus.
Thus varying $c$ changes the helicoidal parametrization, but not the
congruence class of the underlying surface.
\end{rem}

\section{Automorphisms and Compact Quotients}
\label{sec:compact_quotients}

For a global parametrization
$X:\mathbb R^2\to\mathbb S^3$, define
\begin{equation}\label{eq:automorphism_group_definition}
\operatorname{Aut}(X)
:=
\left\{
\Phi\in\operatorname{Diff}(\mathbb R^2):X\circ\Phi=X
\right\}.
\end{equation}
We call the group defined in
\eqref{eq:automorphism_group_definition} the automorphism group of the
parametrization $X$.
For $v\in\mathbb R^2$, write $\tau_v(u)=u+v$. The period group
$\operatorname{Per}(X)$ is defined in
\eqref{eq:translation_period_definition_general}; the translations
$\tau_v$ with $v\in\operatorname{Per}(X)$ form a subgroup of
$\operatorname{Aut}(X)$. We now determine whether additional,
nontranslational automorphisms occur and classify the resulting compact
quotient surfaces in terms of finite-index subgroups.

\subsection{Automorphisms and quotients for \texorpdfstring{$0<c<1/2$}{0<c<1/2}}

\begin{coro}
\label{cor:positive_c_symmetry_group}
Let $h\geq0$ and $0<c<1/2$. Let $L$ be the minimal positive period of $z_c^h$ given
by Lemma~\ref{lem:minimal_height_period}, let $q(h,c)$ be defined by
\eqref{eq:q_definition}, and let $\operatorname{Per}(X_c^h)$ be defined
by \eqref{eq:translation_period_definition_general}. Then
\begin{equation}\label{eq:positive_c_period_set}
\operatorname{Per}(X_c^h)
=
\left\{
(mL,2\pi n):
 m,n\in\mathbb Z,\;
 mq(h,c)+nh\in\mathbb Z
\right\}.
\end{equation}
Every $C^1$-diffeomorphism
\[
F:\Omega\longrightarrow\Omega'
\]
between nonempty connected open subsets of $\mathbb R^2$ satisfying
$X_c^h\circ F=X_c^h$ is the restriction of a translation
\[
\tau_v(u)=u+v,
\qquad
v\in\operatorname{Per}(X_c^h).
\]
Consequently, every automorphism of the parametrization is a translation, and
\begin{equation}\label{eq:positive_c_automorphism_group}
\operatorname{Aut}(X_c^h)
=
\left\{\tau_v:v\in\operatorname{Per}(X_c^h)\right\}.
\end{equation}
\end{coro}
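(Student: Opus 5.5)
This corollary is essentially a repackaging of Lemma~\ref{lem:local_transition_rigidity} together with the translation relation \eqref{eq:global_translation_period_condition}. I would prove three things in turn: the description \eqref{eq:positive_c_period_set} of the period group, the local rigidity statement, and the description \eqref{eq:positive_c_automorphism_group} of $\operatorname{Aut}(X_c^h)$.

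\emph{Period group.} I would prove the two inclusions separately.

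For the inclusion ``$\supseteq$'': by \eqref{eq:global_translation_period_condition}, every $(mL,2\pi n)$ with $mq(h,c)+nh\in\mathbb Z$ satisfies $X_c^h(u+v)=X_c^h(u)$ for all $u$. Such a vector is therefore in $\operatorname{Per}(X_c^h)$.

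For the inclusion ``$\subseteq$'': take $v\in\operatorname{Per}(X_c^h)$. The translation $\tau_v$ is a $C^1$-diffeomorphism of the connected open set $\mathbb R^2$ onto itself, and it satisfies $X_c^h\circ\tau_v=X_c^h$. Lemma~\ref{lem:local_transition_rigidity}, applied with $\Omega=\Omega'=\mathbb R^2$, then gives $\tau_v(s,t)=(s+mL,t+2\pi n)$ with $mq(h,c)+nh\in\mathbb Z$. Evaluating at the origin yields $v=(mL,2\pi n)$.

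\emph{Local rigidity.} Let $F:\Omega\to\Omega'$ be as in the statement. Lemma~\ref{lem:local_transition_rigidity} says that $F$ is the restriction of $(s,t)\mapsto(s+mL,t+2\pi n)$ with $mq(h,c)+nh\in\mathbb Z$. By the period-group step just proved, the vector $(mL,2\pi n)$ lies in $\operatorname{Per}(X_c^h)$. So $F=\tau_v|_\Omega$ with $v\in\operatorname{Per}(X_c^h)$.

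\emph{Automorphism group.} Any $\Phi\in\operatorname{Aut}(X_c^h)$ is a diffeomorphism of $\mathbb R^2$, so it is in particular a $C^1$-diffeomorphism between the connected open sets $\Omega=\Omega'=\mathbb R^2$. The local rigidity step then shows $\Phi=\tau_v$ for some $v\in\operatorname{Per}(X_c^h)$. Conversely, each such $\tau_v$ is a diffeomorphism of $\mathbb R^2$ with $X_c^h\circ\tau_v=X_c^h$, so it belongs to $\operatorname{Aut}(X_c^h)$. This proves \eqref{eq:positive_c_automorphism_group}.

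There is no real obstacle here. All the substantive work (ruling out the reflections $S=-s+mL$ by means of $\dot\lambda>0$) was already done in Lemma~\ref{lem:local_transition_rigidity}. The only point needing care is to apply that lemma globally, with the connected domain $\Omega=\mathbb R^2$, so that the integers $m,n$ are constant on the whole plane.
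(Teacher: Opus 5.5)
Your proposal is correct and follows the paper's proof. Both arguments reduce everything to Lemma~\ref{lem:local_transition_rigidity}, applied on $\Omega$ for the local statement and with $\Omega=\Omega'=\mathbb R^2$ for the global one, together with \eqref{eq:global_translation_period_condition}. The only difference is that you prove the two inclusions for $\operatorname{Per}(X_c^h)$ explicitly, which the paper leaves implicit.
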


\begin{proof}
By Lemma~\ref{lem:local_transition_rigidity}, every such $F$ has the form
$F(s,t)=(s+mL,t+2\pi n)$ on $\Omega$, for some
$m,n\in\mathbb Z$ satisfying $mq(h,c)+nh\in\mathbb Z$. Thus $F$
is the restriction of the translation
$\tau_v$, where $v=(mL,2\pi n)\in\operatorname{Per}(X_c^h)$.
Taking $\Omega=\Omega'=\mathbb R^2$ in the same lemma gives the global
assertion.
\end{proof}

Throughout this subsection, assume that $0<c<1/2$. Let $L$ be the
minimal positive period of the height function from
Lemma~\ref{lem:minimal_height_period}, put $q=q(h,c)$, and define
\begin{equation}\label{eq:translation_period_set}
\Lambda_X:=\operatorname{Per}(X_c^h),
\qquad
\Lambda_{\mathrm{par}}:=\langle(L,0),(0,2\pi)\rangle_{\mathbb Z}.
\end{equation}
Thus $\Lambda_X$ is the period group of $X_c^h$, while
$\Lambda_{\mathrm{par}}$ is the rectangular lattice generated by the vectors
$(L,0)$ and $(0,2\pi)$; here
$\langle\cdot\rangle_{\mathbb Z}$ denotes the integer span.
By Corollary~\ref{cor:positive_c_symmetry_group}, equivalently
\eqref{eq:positive_c_period_set}, $\Lambda_X$ consists exactly of the
period vectors in $\Lambda_{\mathrm{par}}$ satisfying the arithmetic
condition stated there.

When $\operatorname{Hel}_c^h$ is compact,
Theorem~\ref{thm:compactness} gives $q,h\in\mathbb Q$. Moreover,
$q=q(h,c)>0$ by Lemma~\ref{lem:q_range} (also by \eqref{eq:lambda_derivative} and
\eqref{eq:q_definition}). Write
\begin{equation}\label{eq:reduced_closing_data}
q=\frac{p}{r},
\qquad
h=\frac{j}{\nu}
\end{equation}
in lowest terms, with $p,r>0$, $j\geq0$, $\nu>0$, and
$\nu=1$ if $h=0$.

The following index is the multiplicity relating the
parameter-rectangle integral to the Willmore energy of the quotient in
Section~\ref{sec:willmore_energy}.

\begin{lem}\label{lem:period_lattice_index}
Let $h\geq0$ and $0<c<1/2$, assume that $q=q(h,c),h\in\mathbb Q$, and
write $q=p/r$ and $h=j/\nu$ as in
\eqref{eq:reduced_closing_data}. Then $\Lambda_X$ is a rank-two lattice
and
\begin{equation}\label{eq:translation_lattice_index}
[\Lambda_{\mathrm{par}}:\Lambda_X]
=
\operatorname{lcm}(r,\nu),
\end{equation}
where $\Lambda_X$ and $\Lambda_{\mathrm{par}}$ are defined in
\eqref{eq:translation_period_set}.
\end{lem}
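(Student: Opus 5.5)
Identify $\Lambda_{\mathrm{par}}$ with $\mathbb Z^2$ via the isomorphism $(m,n)\mapsto(mL,2\pi n)$. By \eqref{eq:positive_c_period_set}, $\Lambda_X$ corresponds to the subgroup
\[
K=\{(m,n)\in\mathbb Z^2:\ mq+nh\in\mathbb Z\}.
\]
This subgroup is the kernel of the homomorphism
\[
\psi:\mathbb Z^2\longrightarrow\mathbb Q/\mathbb Z,\qquad \psi(m,n)=mq+nh \bmod \mathbb Z.
\]
Hence $[\Lambda_{\mathrm{par}}:\Lambda_X]=[\mathbb Z^2:K]=|\operatorname{im}\psi|$ by the first isomorphism theorem. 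The plan is to compute this image.

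The image of $\psi$ is the subgroup of $\mathbb Q/\mathbb Z$ generated by $p/r$ and $j/\nu$ modulo $1$. Since $\gcd(p,r)=1$, the class of $p/r$ generates the cyclic subgroup $\tfrac1r\mathbb Z/\mathbb Z$ of order $r$. Since $\gcd(j,\nu)=1$, the class of $j/\nu$ generates $\tfrac1\nu\mathbb Z/\mathbb Z$. This also covers $h=0$: then $\nu=1$ and the subgroup is trivial.

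The subgroup generated by $\tfrac1r\mathbb Z/\mathbb Z$ and $\tfrac1\nu\mathbb Z/\mathbb Z$ is $\tfrac{1}{\operatorname{lcm}(r,\nu)}\mathbb Z/\mathbb Z$. One inclusion is clear, because both groups lie in that cyclic group. For the reverse inclusion, Bézout gives integers $a,b$ with $a\nu+br=\gcd(r,\nu)$. Then
\[
\frac{a}{r}+\frac{b}{\nu}=\frac{\gcd(r,\nu)}{r\nu}=\frac{1}{\operatorname{lcm}(r,\nu)}.
\]
Therefore $|\operatorname{im}\psi|=\operatorname{lcm}(r,\nu)$, which is finite. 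This gives \eqref{eq:translation_lattice_index}.

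Rank two then follows at once. A finite-index subgroup of the rank-two lattice $\Lambda_{\mathrm{par}}$ is again a rank-two lattice, and $\Lambda_{\mathrm{par}}$ is generated by the linearly independent vectors $(L,0)$ and $(0,2\pi)$. Alternatively, $\Lambda_X$ contains $(rL,0)$ and $(0,2\pi\nu)$ directly, and it is discrete as a subgroup of $\Lambda_{\mathrm{par}}$.

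The only delicate point is the bookkeeping. One must pass correctly from the reduced fractions to the orders of the generators in $\mathbb Q/\mathbb Z$, including the degenerate case $h=0$. Everything else is standard abelian group theory.
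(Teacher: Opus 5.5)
Your proposal is correct and follows essentially the same route as the paper: realize $\Lambda_X$ as the kernel of $(m,n)\mapsto mq+nh \bmod \mathbb Z$ on $\Lambda_{\mathrm{par}}\cong\mathbb Z^2$, identify the image as $\tfrac{1}{\operatorname{lcm}(r,\nu)}\mathbb Z/\mathbb Z$ via the coprimality conditions and B\'ezout, and deduce rank two from finite index. The only cosmetic differences are that you take $\mathbb Q/\mathbb Z$ rather than $\mathbb R/\mathbb Z$ as the target, and you apply B\'ezout to $(r,\nu)$ instead of to $(N/r,N/\nu)$.
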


\begin{proof}
Identify $\Lambda_{\mathrm{par}}$ with $\mathbb Z^2$ via
\[
(m,n)\longleftrightarrow(mL,2\pi n).
\]
By \eqref{eq:positive_c_period_set}, the subgroup $\Lambda_X$
corresponds to the kernel of the homomorphism
\[
\chi:\mathbb Z^2\longrightarrow\mathbb R/\mathbb Z,
\qquad
\chi(m,n)=mq+nh+\mathbb Z.
\]

 Since
\[
\gcd(p,r)=1,
\qquad
\gcd(j,\nu)=1,
\]
we have
\[
\operatorname{im}\chi
=
\left\langle
\frac{1}{r}+\mathbb Z,
\frac{1}{\nu}+\mathbb Z
\right\rangle_{\mathbb Z}\subset \mathbb R/\mathbb Z.
\]
Set
\[
N=\operatorname{lcm}(r,\nu).
\]
Then
\[
\frac{1}{r}+\mathbb Z,
\frac{1}{\nu}+\mathbb Z
\in
\left\langle
\frac{1}{N}+\mathbb Z
\right\rangle_{\mathbb Z}.
\]
Conversely,
\[
\gcd\left(\frac{N}{r},\frac{N}{\nu}\right)=1,
\]
so B\'ezout's identity shows that
\[
\frac{1}{N}+\mathbb Z
\in
\left\langle
\frac{1}{r}+\mathbb Z,
\frac{1}{\nu}+\mathbb Z
\right\rangle_{\mathbb Z}.
\]
Hence
\[
\operatorname{im}\chi
=
\left\langle
\frac{1}{N}+\mathbb Z
\right\rangle_{\mathbb Z}\subset \mathbb R/\mathbb Z,
\]
which has order $N$. Therefore
\[
|\operatorname{im}\chi|
=
\operatorname{lcm}(r,\nu).
\]

By the first isomorphism theorem,
\[
[\Lambda_{\mathrm{par}}:\Lambda_X]
=
[\mathbb Z^2:\ker\chi]
=
|\operatorname{im}\chi|
=
\operatorname{lcm}(r,\nu),
\]
which proves \eqref{eq:translation_lattice_index}. In particular,
$\Lambda_X$ has finite index in the rank-two lattice
$\Lambda_{\mathrm{par}}$, and hence is itself a rank-two lattice.
\end{proof}

Assume now that $\operatorname{Hel}_c^h$ is compact. By
\eqref{eq:positive_c_automorphism_group} and
Lemma~\ref{lem:period_lattice_index}, $\operatorname{Aut}(X_c^h)$ is the
group of translations associated with the rank-two lattice $\Lambda_X$.
Hence it acts
freely, properly discontinuously, and cocompactly on $\mathbb R^2$. We denote the resulting quotient torus by
\begin{equation}\label{eq:positive_c_automorphism_quotient}
Q_c^h
:=
\mathbb R^2/\operatorname{Aut}(X_c^h)
=
\mathbb R^2/\Lambda_X.
\end{equation}
\begin{thm}
\label{thm:positive_c_compact_realization_classification}
Let $h\geq0$ and $0<c<1/2$, and assume that
$\operatorname{Hel}_c^h$ is compact. Let
$\Lambda_X$ be the period lattice defined in
\eqref{eq:translation_period_set}. For every compact realization $(M,f)$ of
$\operatorname{Hel}_c^h$ in the sense of
Definition~\ref{def:compactness}, there exists a finite-index
sublattice $\Gamma\leq\Lambda_X$ such that
$M\cong\mathbb R^2/\Gamma$ is a torus
and, under this identification, $f$ is induced by $X_c^h$.
Conversely, every finite-index sublattice
$\Gamma\leq\Lambda_X$ determines a compact realization of
$\operatorname{Hel}_c^h$.
\end{thm}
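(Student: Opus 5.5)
The plan is to reuse the developing-map argument from the necessity part of Theorem~\ref{thm:positive_c_compactness}, now keeping track of the deck group instead of only deducing rationality.

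\textbf{Forward direction.} Let $(M,f)$ be a compact realization, and let $p:\widetilde M\to M$ be the universal covering.

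First, lift a compatible parameter atlas of $M$ to charts $D_\alpha$ on $\widetilde M$ satisfying $f\circ p=X_c^h\circ D_\alpha$. On each connected component of an overlap, the transition map is a local symmetry of $X_c^h$. By Corollary~\ref{cor:positive_c_symmetry_group}, it is therefore the restriction of a translation by a vector of $\Lambda_X=\operatorname{Per}(X_c^h)$.

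Next, Lemma~\ref{lem:translational_developing_map} produces a local diffeomorphism $D:\widetilde M\to\mathbb R^2$ with $f\circ p=X_c^h\circ D$. This $D$ is a local isometry for $p^*f^*g_{\mathbb S^3}$ and $(X_c^h)^*g_{\mathbb S^3}$. The pulled-back metric is complete, since $M$ is compact. Lemma~\ref{lem:covering_criterion} then shows that $D$ is a covering, and since $\mathbb R^2$ is simply connected, $D$ is a diffeomorphism.

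Now set $\pi=p\circ D^{-1}:\mathbb R^2\to M$, and let $\Gamma$ be its deck group. Every $\gamma\in\Gamma$ satisfies $X_c^h\circ\gamma=f\circ\pi\circ\gamma=X_c^h$, so $\Gamma\le\operatorname{Aut}(X_c^h)$. By \eqref{eq:positive_c_automorphism_group}, $\Gamma$ is a group of translations by vectors in $\Lambda_X$. Because $\Gamma$ acts freely and properly discontinuously with compact quotient $M\cong\mathbb R^2/\Gamma$, Lemma~\ref{lem:planar_quotient_groups}(1) shows that $\Gamma$ is a rank-two lattice, and hence $M$ is a torus.

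Its index in $\Lambda_X$ is finite. This follows from Lemma~\ref{lem:planar_quotient_groups}(2) applied to $Y=\mathbb R^2$, $G=\Lambda_X$ and $H=\Gamma$, using that $\mathbb R^2/\Lambda_X$ is compact by Lemma~\ref{lem:period_lattice_index}. Alternatively, two rank-two lattices with one contained in the other automatically have finite index.

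Finally, under the identification $M\cong\mathbb R^2/\Gamma$ given by $\pi$, we have $f\circ\pi=X_c^h$. This is exactly the statement that $f$ is induced by $X_c^h$.

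\textbf{Converse.} Let $\Gamma\le\Lambda_X$ have finite index. Then $\Gamma$ is a rank-two lattice, since $\Lambda_X$ is one by Lemma~\ref{lem:period_lattice_index}. It acts by translations freely, properly discontinuously and cocompactly, so $M=\mathbb R^2/\Gamma$ is a compact torus. Since each element of $\Gamma$ is a period of $X_c^h$, the map $X_c^h$ descends to a smooth map $f:M\to\mathbb S^3$.

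This $f$ is an immersion by Proposition~\ref{prop:global_minimal_immersion}. The local inverses of the covering projection $\mathbb R^2\to M$ on evenly covered neighborhoods give the charts $\Phi_p$ required by Definition~\ref{def:compactness}, with $f|_{U_p}=X_c^h\circ\Phi_p$.

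\textbf{Main obstacle.} The only delicate step is making the identification $D:\widetilde M\cong\mathbb R^2$ rigorous, namely the completeness and covering argument. That argument is already carried out in the proof of Theorem~\ref{thm:positive_c_compactness}, so I would cite it rather than redo it. The genuinely new input is that all automorphisms are translations (Corollary~\ref{cor:positive_c_symmetry_group}). This is what lets us conclude that $\Gamma$ sits inside $\Lambda_X$ and that no reflections occur, in contrast with the Lawson case.
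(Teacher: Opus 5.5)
Your proposal is correct and follows essentially the same route as the paper: you identify the universal cover with $\mathbb R^2$ via the developing-map and covering argument from Theorem~\ref{thm:positive_c_compactness}. You then use that every automorphism of $X_c^h$ is a translation by a vector of $\Lambda_X$ to place the deck group inside $\Lambda_X$, obtain finite index from Lemma~\ref{lem:planar_quotient_groups}, and prove the converse by descending $X_c^h$ to $\mathbb R^2/\Gamma$. The only difference is that you spell out details the paper leaves implicit, namely the immersion property and the construction of the charts in the converse.
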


\begin{proof}
Let $(M,f)$ be a compact realization of
$\operatorname{Hel}_c^h$. The proof of
Theorem~\ref{thm:positive_c_compactness} identifies the universal
cover of $M$ with $\mathbb R^2$. Under this identification, its deck
group consists of translations
\[
\{\tau_v:v\in\Gamma\},
\]
where $\Gamma\leq\Lambda_X$ is a rank-two sublattice. Since both $\mathbb R^2/\Gamma\cong M$ and
$\mathbb R^2/\Lambda_X=Q_c^h$ are compact,
Lemma~\ref{lem:planar_quotient_groups} gives
$[\Lambda_X:\Gamma]<\infty$.
It follows that
$M\cong\mathbb R^2/\Gamma$ is a torus
and $f$ is induced by $X_c^h$.

Conversely, let $\Gamma\leq\Lambda_X$ be a finite-index sublattice.
Then $X_c^h$ is $\Gamma$-invariant and descends to an immersion
\[
\mathbb R^2/\Gamma\longrightarrow\mathbb S^3.
\]
By Lemma~\ref{lem:planar_quotient_groups},
$\mathbb R^2/\Gamma$ is compact, so the resulting pair is a compact
realization of $\operatorname{Hel}_c^h$.
\end{proof}

\subsection{Lawson automorphisms and quotients}

In the Lawson case $c=0$, the automorphism group may contain glide reflections
in addition to translations.

For $h>0$, write
\begin{equation}\label{eq:lawson_automorphism_group}
\mathcal G_0^h:=\operatorname{Aut}(X_0^h)
\end{equation}
for the automorphism group of the Lawson parametrization. Its local form is
given by Lemma~\ref{lem:lawson_complete_local_symmetry}.

\begin{prop}
\label{prop:lawson_complete_symmetry_group}
Let $h=j/\nu>0$, where $j$ and $\nu$ are coprime positive integers. The period lattice of $X_0^h$ is
\begin{equation}\label{eq:lawson_translation_lattice}
\Lambda_0^h
:=
\operatorname{Per}(X_0^h)
=
\begin{cases}
\langle(\pi,\pi\nu),(0,2\pi\nu)\rangle_{\mathbb Z},
& j,\nu \text{ are both odd},\\[1mm]
\langle(2\pi,0),(0,2\pi\nu)\rangle_{\mathbb Z},
& \text{exactly one of $j,\nu$ is even},
\end{cases}.
\end{equation}
Let
\begin{equation}\label{eq:lawson_translation_subgroup}
\mathcal T_0^h
:=
\{\tau_v:v\in\Lambda_0^h\}
\end{equation}
be the corresponding translation subgroup. Then $\mathcal T_0^h$ is the
orientation-preserving subgroup of
$\mathcal G_0^h$. More precisely, a translation
\[
(s,t)\longmapsto(s+a,t+b)
\]
belongs to $\mathcal G_0^h$ if and only if
\begin{equation}\label{eq:lawson_translation_period_conditions}
(a,b)=(k\pi,m\pi),
\qquad
m+k\in2\mathbb Z,
\qquad
hm+k\in2\mathbb Z
\end{equation}
for some $k,m\in\mathbb Z$.

If $j$ and $\nu$ are both odd, then
\begin{equation}\label{eq:lawson_group_odd_odd}
\mathcal G_0^h=\mathcal T_0^h,
\end{equation}
and $\mathbb R^2/\mathcal G_0^h$ is a torus.

If exactly one of $j$ and $\nu$ is even, define
\begin{equation}\label{eq:lawson_glide_reflection}
\rho_{j,\nu}(s,t)=(-s-j\pi,t+\nu\pi).
\end{equation}
Then
\begin{equation}\label{eq:lawson_group_mixed_parity}
\mathcal G_0^h
=
\mathcal T_0^h
\sqcup
\mathcal T_0^h\rho_{j,\nu},
\qquad
[\mathcal G_0^h:\mathcal T_0^h]=2.
\end{equation}
In this case $\mathbb R^2/\mathcal G_0^h$ is a Klein bottle, and
$\mathbb R^2/\Lambda_0^h$ is its orientable double
cover.
\end{prop}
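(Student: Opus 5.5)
The whole argument rests on Lemma~\ref{lem:lawson_complete_local_symmetry}. Applied with $\Omega=\Omega'=\mathbb R^2$, it shows that every $\Phi\in\mathcal G_0^h$ has the form $\Phi(s,t)=(\varepsilon s+k\pi,t+m\pi)$, with the parity conditions \eqref{eq:lawson_complete_local_symmetry_conditions}. The map $\Phi$ is a translation exactly when $\varepsilon=1$. This immediately gives the translation criterion \eqref{eq:lawson_translation_period_conditions}, and it shows that the orientation-preserving elements are precisely the translations, i.e.\ $\mathcal T_0^h$ once $\Lambda_0^h$ is computed.

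\textbf{Computing $\Lambda_0^h$.} With $h=j/\nu$, the condition $hm+k\in2\mathbb Z$ forces $\nu\mid jm$, hence $\nu\mid m$. Write $m=\nu m'$; the condition becomes $jm'+k\in2\mathbb Z$. Combined with $m+k=\nu m'+k\in2\mathbb Z$, this gives $(j-\nu)m'\in2\mathbb Z$.
\begin{itemize}
\item If $j,\nu$ are both odd, the two conditions coincide. The solutions are $(k,m')$ with $k\equiv m'\pmod 2$, a lattice generated by $(1,1)$ and $(0,2)$. Translated back to $(a,b)=(k\pi,\nu m'\pi)$, this is $\langle(\pi,\pi\nu),(0,2\pi\nu)\rangle_{\mathbb Z}$.
\item If exactly one of $j,\nu$ is even, then $j-\nu$ is odd, so $m'$ is even and $k$ is even. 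The lattice is $\langle(2\pi,0),(0,2\pi\nu)\rangle_{\mathbb Z}$.
\end{itemize}
In both cases I would check that the listed generators satisfy the conditions, so the lattice is exactly as stated in \eqref{eq:lawson_translation_lattice}.

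\textbf{The reflections ($\varepsilon=-1$).} Here the conditions are $m+k$ odd and $hm+k\in2\mathbb Z$. As before $\nu\mid m$, so writing $m=\nu m'$ we need $jm'+k$ even and $\nu m'+k$ odd.
\begin{itemize}
\item If $j,\nu$ are both odd, these are contradictory, so no reflections exist and $\mathcal G_0^h=\mathcal T_0^h$. The quotient is $\mathbb R^2$ modulo a rank-two lattice, a torus by Lemma~\ref{lem:planar_quotient_groups}.
\item If exactly one is even, $(m',k)=(1,-j)$ works. Indeed $j-j=0$ is even, and $\nu-j$ is odd. This is $\rho_{j,\nu}$.
\end{itemize}
Any other reflection $\Phi$ satisfies $\Phi\rho_{j,\nu}^{-1}\in\mathcal G_0^h$ with $\varepsilon=1$, i.e.\ a translation in $\mathcal T_0^h$. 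Hence the coset decomposition \eqref{eq:lawson_group_mixed_parity} holds, with index exactly $2$ because $\rho_{j,\nu}\notin\mathcal T_0^h$.

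\textbf{Identifying the quotient as a Klein bottle.} This is the main step needing care. I would show that $\mathcal G_0^h$ acts freely and properly discontinuously. Proper discontinuity holds because $\mathcal G_0^h$ contains the lattice $\mathcal T_0^h$ with finite index. For freeness, the elements of the nontrivial coset have the form $\tau_v\rho_{j,\nu}$, with $v\in\Lambda_0^h$, and each is $(s,t)\mapsto(-s+a,t+b)$ with $b\equiv\nu\pi\pmod{2\pi\nu}$. In particular $b\neq0$, so there are no fixed points, and every such element is a glide reflection.

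Hence $\mathbb R^2/\mathcal G_0^h$ is a compact flat surface, double covered by the torus $\mathbb R^2/\Lambda_0^h$ via Lemma~\ref{lem:planar_quotient_groups}(2). It is non-orientable because $\rho_{j,\nu}$ reverses orientation. Its Euler characteristic is $0$, half that of the torus, so by the classification of surfaces it is a Klein bottle, and $\mathbb R^2/\Lambda_0^h$ is its orientation double cover.
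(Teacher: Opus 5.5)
Your proof is correct, and its skeleton matches the paper's. You apply Lemma~\ref{lem:lawson_complete_local_symmetry} with $\Omega=\Omega'=\mathbb R^2$, solve the divisibility and parity conditions for $\varepsilon=1$ and $\varepsilon=-1$, deduce the coset decomposition, and check freeness.

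The differences are in execution. In the arithmetic, you first extract $\nu\mid m$ and reduce everything to parity conditions on $(k,m')$, which handles both parity cases uniformly. The paper instead works with $jm+k\nu\in2\nu\mathbb Z$ and treats the cases separately: it inverts $j$ modulo $2\nu$ when $j,\nu$ are both odd, and otherwise reduces modulo $2$ and uses $\gcd(j,\nu)=1$. You obtain the nontrivial coset by group closure ($\Phi\rho_{j,\nu}^{-1}$ is a translation), where the paper shows directly that the solution set for $\varepsilon=-1$ is the shift $(-j,\nu)+A_1$.

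The real divergence is the final identification of the quotient. You argue that $\mathbb R^2/\mathcal G_0^h$ is a closed non-orientable surface with Euler characteristic $0$, then invoke the classification of surfaces. The paper instead exhibits generators $\tau_{(2\pi,0)}$ and $\rho_{j,\nu}$ satisfying $\rho_{j,\nu}^2=\tau_{(0,2\pi\nu)}$ and $\rho_{j,\nu}\tau_{(2\pi,0)}\rho_{j,\nu}^{-1}=\tau_{(2\pi,0)}^{-1}$, and so recognizes the standard Klein bottle group acting on the plane. Your route is quicker and avoids verifying a presentation, at the cost of appealing to the classification theorem. The paper's route gives an explicit presentation of $\mathcal G_0^h\cong\pi_1(Q_0^h)$, which makes the finite-index subgroups in Theorem~\ref{thm:lawson_compact_realization_classification} concrete.
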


\begin{proof}
By Lemma~\ref{lem:lawson_complete_local_symmetry}, the
orientation-preserving elements are precisely the translations satisfying
\eqref{eq:lawson_translation_period_conditions}. For
$\varepsilon\in\{1,-1\}$, define
\[
A_\varepsilon
:=
\left\{
(k,m)\in\mathbb Z^2:
 m+k\in2\mathbb Z+\frac{1-\varepsilon}{2},
\quad
jm+k\nu\in2\nu\mathbb Z
\right\}.
\]
Thus $A_1$ parametrizes the translations and $A_{-1}$ parametrizes the
orientation-reversing symmetries. By
Lemma~\ref{lem:lawson_complete_local_symmetry},
\[
\mathcal G_0^h
=
\left\{
(s,t)\longmapsto(\varepsilon s+k\pi,t+m\pi):
\varepsilon\in\{1,-1\},\ (k,m)\in A_\varepsilon
\right\}.
\]

Suppose first that $j$ and $\nu$ are both odd. Then $j$ is invertible
modulo $2\nu$, and its inverse is odd; hence
$j^{-1}\nu\equiv\nu\pmod{2\nu}$. Therefore
\[
jm+k\nu\in2\nu\mathbb Z
\quad\Longleftrightarrow\quad
m+k\nu\in2\nu\mathbb Z.
\]
Consequently,
\[
A_1
=
\mathbb Z(1,\nu)\oplus\mathbb Z(0,2\nu),
\qquad
A_{-1}=\varnothing.
\]
Indeed, if $(k,m)\in A_{-1}$, then $m+k$ is odd, whereas
\[
jm+k\nu\equiv m+k\pmod2,
\]
contradicting $jm+k\nu\in2\nu\mathbb Z$. Thus
\eqref{eq:lawson_translation_lattice} and
\eqref{eq:lawson_group_odd_odd} follow in this case.

Suppose now that exactly one of $j$ and $\nu$ is even. Let
$(k,m)\in A_1$. If $j$ is even and $\nu$ is odd, reduction of
$jm+k\nu\in2\nu\mathbb Z$ modulo $2$ shows that $k$ is even. If $j$
is odd and $\nu$ is even, the same divisibility condition implies
$\nu\mid jm$; since $\gcd(j,\nu)=1$, one has $\nu\mid m$, and hence
$m$ is even. Since $m+k\in2\mathbb Z$, both $k$ and $m$ are even in
either case. Writing $k=2a$ and $m=2b$ gives
\[
jb+a\nu\in\nu\mathbb Z.
\]
Since $\gcd(j,\nu)=1$, one has $b\in\nu\mathbb Z$, and hence
\[
A_1
=
\mathbb Z(2,0)\oplus\mathbb Z(0,2\nu).
\]
Moreover,
\[
A_{-1}=(-j,\nu)+A_1.
\]
Indeed, since $j+\nu$ is odd,
\[
(k,m)\in A_{-1}
\quad\Longleftrightarrow\quad
(k+j,m-\nu)\in A_1,
\]
because
\[
j(m-\nu)+(k+j)\nu=jm+k\nu.
\]
The element corresponding to $(\varepsilon,k,m)=(-1,-j,\nu)$ is
$\rho_{j,\nu}$, so
\eqref{eq:lawson_group_mixed_parity} follows.

Every nonidentity translation is fixed-point-free. If an
orientation-reversing element
\[
(s,t)\longmapsto(-s+k\pi,t+m\pi)
\]
had a fixed point, then $m=0$. Its defining conditions would then give
both $k\in2\mathbb Z+1$ and $k=hm+k\in2\mathbb Z$, a contradiction.
Hence the action of $\mathcal G_0^h$ is free. The subgroup $\mathcal T_0^h$ is the group of translations associated
with the rank-two lattice $\Lambda_0^h$, so it acts properly discontinuously
and cocompactly. Since $\mathcal G_0^h$ is the union of at most two cosets
of $\mathcal T_0^h$, the same holds for $\mathcal G_0^h$.

When $j$ and $\nu$ are both odd, the quotient is the torus determined
by $\Lambda_0^h$. In the remaining case, set
$\tau_1:=\tau_{(2\pi,0)}$. Then
\[
\mathcal T_0^h
=
\langle \tau_1,\rho_{j,\nu}^{\,2}\rangle,
\qquad
\rho_{j,\nu}^{\,2}=\tau_{(0,2\pi\nu)},
\qquad
\rho_{j,\nu}\tau_1\rho_{j,\nu}^{-1}=\tau_1^{-1}.
\]
Thus $\mathcal G_0^h=\langle \tau_1,\rho_{j,\nu}\rangle$ is generated
by a translation and a glide reflection satisfying the standard Klein bottle
relation. Hence
$\mathbb R^2/\mathcal G_0^h$ is a Klein bottle, and the quotient by its index-two translation subgroup,
$\mathbb R^2/\Lambda_0^h$, is the orientable double cover.
\end{proof}

This recovers the classical Lawson classification into tori and Klein
bottles; see \cite[Section~7]{Lawson1970} and the explicit formulation in
\cite[Definition~2]{Penskoi2012}.

For $h=j/\nu>0$ in lowest terms, Proposition~\ref{prop:lawson_complete_symmetry_group}
shows that $\mathcal G_0^h=\operatorname{Aut}(X_0^h)$ acts freely,
properly discontinuously, and cocompactly on $\mathbb R^2$. We denote the
resulting quotient surface by
\begin{equation}\label{eq:lawson_automorphism_quotient}
Q_0^h
:=
\mathbb R^2/\operatorname{Aut}(X_0^h)
=
\mathbb R^2/\mathcal G_0^h.
\end{equation}
\begin{thm}
\label{thm:lawson_compact_realization_classification}
Let $h=j/\nu>0$ be written in lowest terms, and let
$\mathcal G_0^h$ be the automorphism group defined in
\eqref{eq:lawson_automorphism_group} and described in
Proposition~\ref{prop:lawson_complete_symmetry_group}. For every compact
realization $(M,f)$ of $\operatorname{Hel}_0^h$ in the sense of
Definition~\ref{def:compactness}, there exists a finite-index subgroup
$\Gamma\leq\mathcal G_0^h$ such that
\[
M\cong\mathbb R^2/\Gamma,
\]
and, under this identification, $f$ is induced by $X_0^h$.
Conversely, every finite-index subgroup
$\Gamma\leq\mathcal G_0^h$ determines a compact realization of
$\operatorname{Hel}_0^h$.
\end{thm}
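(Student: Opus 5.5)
The plan is to follow the necessity argument of Theorem~\ref{thm:positive_c_compactness}. The new feature is that local symmetries of $X_0^h$ need not be translations: by Lemma~\ref{lem:lawson_complete_local_symmetry} they include the orientation-reversing maps $(s,t)\mapsto(-s+k\pi,t+m\pi)$. So Lemma~\ref{lem:translational_developing_map} cannot be applied verbatim, and I will replace it by a developing-map construction with transition maps in the group $\mathcal G_0^h$.

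Let $(M,f)$ be a compact realization, let $p:\widetilde M\to M$ be the universal cover, and lift a compatible parameter atlas to charts $D_\alpha:U_\alpha\to W_\alpha$ with $f\circ p=X_0^h\circ D_\alpha$. On each connected component $C$ of $U_\alpha\cap U_\beta$, the transition map $D_\beta\circ(D_\alpha|_C)^{-1}$ satisfies $X_0^h\circ F=X_0^h$. By Lemma~\ref{lem:lawson_complete_local_symmetry}, it is the restriction of a global element $g_{\beta\alpha}\in\mathcal G_0^h$; here we use Proposition~\ref{prop:lawson_complete_symmetry_group}, which identifies $\mathcal G_0^h$ with exactly the set of global maps of that form.

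Next I build a global developing map $D:\widetilde M\to\mathbb R^2$ by analytic continuation. Fix a base chart $U_{\alpha_0}$ and set $D=D_{\alpha_0}$ there. Along a chain of charts $U_{\alpha_0},\dots,U_{\alpha_k}$, define $D=g^{-1}\circ D_{\alpha_k}$, where $g$ is the product of the transition elements. Two facts make this well defined: the elements of $\mathcal G_0^h$ are affine maps, hence determined by their restriction to any open set, and $\widetilde M$ is simply connected. Together these give independence of the chosen chain, by the standard monodromy argument. One way to organize this is to view the charts as an $(\mathcal G_0^h,\mathbb R^2)$-structure in the sense of $(G,X)$-structures, whose developing map exists on simply connected manifolds. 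Since every element of $\mathcal G_0^h$ preserves $X_0^h$, we get $f\circ p=X_0^h\circ D$. Then $D$ is a local isometry from $(\widetilde M,p^*f^*g_{\mathbb S^3})$ to $(\mathbb R^2,(X_0^h)^*g_{\mathbb S^3})$. The source is complete because $M$ is compact, so Lemma~\ref{lem:covering_criterion} makes $D$ a covering map, hence a diffeomorphism.

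Identify $\widetilde M$ with $\mathbb R^2$ via $D$ and let $\Gamma$ be the deck group of $\pi=p\circ D^{-1}$. Each $\gamma\in\Gamma$ satisfies $X_0^h\circ\gamma=f\circ\pi\circ\gamma=X_0^h$, so $\Gamma\le\mathcal G_0^h$, and $M\cong\mathbb R^2/\Gamma$ with $f$ induced by $X_0^h$. Finite index follows from Lemma~\ref{lem:planar_quotient_groups}(2) applied to $Y=\mathbb R^2$ and $G=\mathcal G_0^h$. That group acts freely and properly discontinuously with compact quotient $Q_0^h$, by Proposition~\ref{prop:lawson_complete_symmetry_group}, and $\mathbb R^2/\Gamma$ is compact, so $[\mathcal G_0^h:\Gamma]<\infty$.

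For the converse, take a finite-index subgroup $\Gamma\le\mathcal G_0^h$. It acts freely and properly discontinuously because $\mathcal G_0^h$ does, so $\mathbb R^2/\Gamma$ is a smooth surface. It is compact by Lemma~\ref{lem:planar_quotient_groups}(2). Since $X_0^h$ is $\Gamma$-invariant, it descends to an immersion $\mathbb R^2/\Gamma\to\mathbb S^3$, and the local inverses of the quotient map supply the charts $\Phi_p$ required by Definition~\ref{def:compactness}.

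The main obstacle is the developing-map step: Lemma~\ref{lem:translational_developing_map} only handles translation transitions. I will prove the needed generalization directly via path-lifting, using rigidity of affine maps to guarantee that the continuation is single-valued.
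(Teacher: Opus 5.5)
Your argument is correct, but it handles the developing-map step differently from the paper. The paper does not build a general $(\mathcal G_0^h,\mathbb R^2)$-structure. Instead, it fixes an orientation on $\widetilde M$ and reduces to translational transitions, so that Lemma~\ref{lem:translational_developing_map} applies verbatim. When exactly one of $j,\nu$ is even, the paper replaces every orientation-reversing lifted chart $D_\alpha$ by $\rho_{j,\nu}\circ D_\alpha$, where $\rho_{j,\nu}$ is the glide reflection \eqref{eq:lawson_glide_reflection}. All transitions are then orientation-preserving local symmetries, hence translations by vectors of $\Lambda_0^h$. When $j,\nu$ are both odd, a parity check ($jm+k\nu\equiv m+k\pmod 2$) shows that orientation-reversing transitions cannot occur at all. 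From there on, your steps match the paper's: the covering argument, the identification of the deck group with a subgroup of $\mathcal G_0^h$, the finite-index conclusion from Lemma~\ref{lem:planar_quotient_groups}, and the converse.

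Your route is uniform: it needs no parity split and no orientation bookkeeping. It would also work unchanged for any parametrization whose local symmetries are restrictions of a group of affine maps. The price is that you must prove or cite the monodromy theorem for $(G,X)$-structures. That includes the cocycle condition on components of triple overlaps, which again follows from rigidity of affine maps. The paper's orientation trick avoids this and needs only the elementary fact that a closed $\mathbb R^2$-valued $1$-form on a simply connected surface is exact.
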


\begin{proof}
Let $(M,f)$ be a compact realization of
$\operatorname{Hel}_0^h$, and let
\[
p:\widetilde M\longrightarrow M
\]
be the universal covering. After shrinking the parameter charts
induced by $X_0^h$ if necessary, assume that each chart domain is evenly
covered by $p$. For each refined parameter chart
\[
\Phi_\alpha:V_\alpha\longrightarrow W_\alpha\subset\mathbb R^2
\]
and each connected component $U_\alpha$ of $p^{-1}(V_\alpha)$, set
\[
D_\alpha
:=
\Phi_\alpha\circ p|_{U_\alpha}
:
U_\alpha\longrightarrow W_\alpha.
\]
Then $D_\alpha$ is a diffeomorphism and
$f\circ p=X_0^h\circ D_\alpha$ on $U_\alpha$. Since the simply
connected surface $\widetilde M$ is orientable, fix an orientation on
$\widetilde M$.

Suppose first that exactly one of $j$ and $\nu$ is even. Let
$\rho_{j,\nu}$ be the glide reflection defined in
\eqref{eq:lawson_glide_reflection}. For each lifted parameter chart, set
\[
\widehat D_\alpha=
\begin{cases}
D_\alpha,
& D_\alpha\text{ is orientation preserving},\\
\rho_{j,\nu}\circ D_\alpha,
& D_\alpha\text{ is orientation reversing}.
\end{cases}
\]
Since $X_0^h\circ\rho_{j,\nu}=X_0^h$, these charts satisfy
\[
f\circ p=X_0^h\circ\widehat D_\alpha.
\]
On each connected component of an overlap, the transition map between the
charts $\widehat D_\alpha$ is an orientation-preserving local symmetry of
$X_0^h$, and hence is a translation by a vector in $\Lambda_0^h$.

If $j$ and $\nu$ are both odd, an orientation-reversing transition
would, by Lemma~\ref{lem:lawson_complete_local_symmetry}, have integers
$k,m$ with $m+k$ odd and
\[
jm+k\nu\in2\nu\mathbb Z.
\]
Reducing the latter condition modulo $2$ gives
$m+k\equiv0\pmod2$, a contradiction. Hence, on each connected
component of an overlap, the transition map is a translation by a vector in
$\Lambda_0^h$.

Lemma~\ref{lem:translational_developing_map} therefore gives a local
diffeomorphism
\[
D:\widetilde M\longrightarrow\mathbb R^2,
\qquad
f\circ p=X_0^h\circ D.
\]
As in the proof of Theorem~\ref{thm:lawson_compactness}, the pullback
metrics make $D$ a local isometry from a complete manifold. Hence
Lemma~\ref{lem:covering_criterion} shows that $D$ is a covering of
$\mathbb R^2$, and therefore a diffeomorphism.

Conjugating the deck group of $p$ by $D$ gives a subgroup
$\Gamma\leq\mathcal G_0^h$ such that
\[
M\cong\mathbb R^2/\Gamma,
\]
and $f$ is induced by $X_0^h$. Since both $\mathbb R^2/\Gamma\cong M$ and
$\mathbb R^2/\mathcal G_0^h=Q_0^h$ are compact,
Lemma~\ref{lem:planar_quotient_groups} gives
$[\mathcal G_0^h:\Gamma]<\infty$.

Conversely, let $\Gamma\leq\mathcal G_0^h$ be a finite-index
subgroup. Proposition~\ref{prop:lawson_complete_symmetry_group} shows that
$\mathcal G_0^h$ acts freely, properly discontinuously, and cocompactly on
$\mathbb R^2$. Freeness and proper discontinuity pass to $\Gamma$, while
Lemma~\ref{lem:planar_quotient_groups} gives cocompactness from
$[\mathcal G_0^h:\Gamma]<\infty$. Since $X_0^h$ is $\Gamma$-invariant, it
descends to an immersion
\[
\mathbb R^2/\Gamma\longrightarrow\mathbb S^3.
\]
The resulting pair is a compact realization of
$\operatorname{Hel}_0^h$.
\end{proof}

\section{Willmore Energy}
\label{sec:willmore_energy}

We first recall the Willmore functional, then compute the area integral over
a standard parameter rectangle and evaluate it in terms of complete elliptic
integrals. We next compute the Willmore energies of the quotient surfaces by the
automorphism groups identified in
Section~\ref{sec:compact_quotients}, and conclude with elliptic-integral
estimates and monotonicity properties. The resulting energy bounds are used
in Section~\ref{sec:associated_families}.

For a smooth immersion $f:M\to\mathbb S^3$ of a compact surface,
choose a unit normal locally and let $H_f$ denote the corresponding scalar
mean curvature, with the convention used in \cite{Castro2024}, so that
$2H_f$ is the trace of the second fundamental form with respect to the
induced metric. Since changing the local unit normal changes only the sign of
$H_f$, the function $H_f^2$ is globally defined.
We set
\begin{equation}\label{eq:willmore_functional}
    \mathcal W(f)=\int_M\bigl(1+H_f^2\bigr)\,\dif A_f,
\end{equation}
where $\dif A_f$ is the induced area measure. When a parametrization descends to the quotient surface $Q$ by its
automorphism group, let
\[
\bar X:Q\longrightarrow\mathbb S^3
\]
denote the induced immersion, and write
\begin{equation}\label{eq:quotient_willmore_energy}
\mathcal W(Q):=\mathcal W(\bar X).
\end{equation}
Since the quotient immersions considered here are minimal, their Willmore
energy equals their area.

\subsection{Area integral over a parameter rectangle}

We compute the area density of $X_c^h$ and integrate it over one height
period and one angular period. For $0<c<1/2$, write
\[
\alpha(z):=\sqrt{\det(g_{ij})}
\]
for the area density of $X_c^h$. By \eqref{eq:Kc} and the induced metric
formula \eqref{eq:local_induced_metric},
\begin{equation}\label{eq:alpha2_final}
\alpha(z)
=
\frac{z^2\sqrt{h^2+(1-h^2)z^2}}{\sqrt{z^4+h^2c^2}},
\end{equation}
where $z=z_c^h(s)$ is the global height function introduced in \eqref{eq:z_periodic_global}.
On each monotone branch of $z$, the factorization
\eqref{eq:Fhc_factorization} and \eqref{eq:Castro_reconstruction} gives
\begin{equation}\label{eq:ds_final}
\dif s
=
\frac{\sqrt{z^4+h^2c^2}}
{z\sqrt{z^2-z^4-c^2}}\,|\dif z|.
\end{equation}

Let $L$ be the minimal positive period given by
Lemma~\ref{lem:minimal_height_period}. For $0<c<1/2$, the area density is
invariant under translations by vectors $(L,0)$ and $(0,2\pi)$; for $c=0$, it is
invariant under translations by vectors $(\pi,0)$ and $(0,2\pi)$. These rectangles
need not be fundamental domains for the automorphism groups.

For $0<c<1/2$ and any $s_0\in\mathbb R$, define
\begin{equation}\label{eq:parameter_rectangle_integral}
\mathcal E(h,c)
:=
\int_0^{2\pi}\int_{s_0}^{s_0+L}\alpha(z(s))\,\dif s\,\dif t.
\end{equation}
This value is independent of $s_0$. For $c=0$ and $h\geq0$, define
\begin{equation}\label{eq:lawson_parameter_rectangle_integral}
\mathcal E(h,0)
:=
\int_0^{2\pi}\int_0^\pi
\sqrt{h^2\cos^2s+\sin^2s}\,\dif s\,\dif t.
\end{equation}
In particular,
\begin{equation}\label{eq:sphere_parameter_integral}
\mathcal E(0,0)=4\pi.
\end{equation}
At $(h,c)=(0,0)$, this is the parameter integral associated with the
polar parametrization and equals the Willmore energy of the totally
geodesic sphere.

The increasing and decreasing branches over one height period contribute
equally. Hence \eqref{eq:alpha2_final} and \eqref{eq:ds_final} yield that for $0<c<1/2$, \eqref{eq:parameter_rectangle_integral} can be written as
\begin{equation}\label{eq:parameter_integral_z}
\mathcal E(h,c)
=
4\pi\int_{z_{\min}}^{z_{\max}}
\frac{z\sqrt{h^2+(1-h^2)z^2}}
{\sqrt{z^2-z^4-c^2}}\,\dif z,
\end{equation}
where $[z_{\min},z_{\max}]$ is the range of $z$ by \eqref{eq:height_extrema} and \eqref{eq:z_periodic_global}. For $c=0$, \eqref{eq:parameter_integral_z} reduces to \eqref{eq:lawson_parameter_rectangle_integral}. 
The integral is finite because its only endpoint singularities are of
inverse-square-root type.

\subsection{Elliptic-integral evaluation}

Set $\delta=\sqrt{1-4c^2}$. Then
$z_{\min}^2=(1-\delta)/2$ and $z_{\max}^2=(1+\delta)/2$. Under the
substitution
\[
z^2=\frac{1-\delta}{2}+\delta\sin^2\theta,
\qquad 0\leq\theta\leq\frac{\pi}{2},
\]
one has
$\sqrt{z^2-z^4-c^2}=\delta\sin\theta\cos\theta$ and
$z\,\dif z=\delta\sin\theta\cos\theta\,\dif\theta$. Define
\begin{equation}\label{eq:AB}
A(h,\delta)
:=
\frac{1+\delta+(1-\delta)h^2}{2},
\qquad
B(h,\delta):=\delta(1-h^2).
\end{equation}
After replacing $\theta$ by $\pi/2-\theta$,
\eqref{eq:parameter_integral_z} becomes
\begin{equation}\label{eq:parameter_integral_theta}
\mathcal E(h,c)
=
4\pi\int_0^{\pi/2}
\sqrt{A(h,\delta)-B(h,\delta)\sin^2\theta}\,\dif\theta.
\end{equation}

We use the standard complete elliptic integrals \cite[Section~19.2]{DLMF}:
\begin{equation}\label{eq:complete_elliptic_integrals}
K(k)=\int_0^{\pi/2}\frac{\dif\theta}{\sqrt{1-k^2\sin^2\theta}},
\qquad
E(k)=\int_0^{\pi/2}\sqrt{1-k^2\sin^2\theta}\,\dif\theta.
\end{equation}

\begin{rem}\label{rem:imaginary_modulus_branch}
The elliptic integrals are taken on the principal branch. If $k^2<0$, we
write $k=i\eta$, $\eta>0$, and use
\[
E(i\eta)
=
\sqrt{1+\eta^2}\,
E\!\left(\frac{\eta}{\sqrt{1+\eta^2}}\right),
\]
see \cite[Eq.~(19.7.2)]{DLMF}.
\end{rem}

\begin{thm}\label{thm:parameter_rectangle_integral}
For $0<c<1/2$, let $\mathcal E(h,c)$ be defined by
\eqref{eq:parameter_rectangle_integral}; for $c=0$, let
$\mathcal E(h,0)$ be defined by
\eqref{eq:lawson_parameter_rectangle_integral}. Set
\[
k^2=\frac{B(h,\delta)}{A(h,\delta)},
\]
where $A$ and $B$ are defined in \eqref{eq:AB}. Then
\begin{equation}\label{eq:parameter_integral_elliptic}
\mathcal E(h,c)
=
4\pi\sqrt{A(h,\delta)}\,E(k),
\end{equation}
where $E(k)$ is the complete elliptic integral in \eqref{eq:complete_elliptic_integrals}.
When $h>1$, one has $k^2<0$, and the principal branch is understood as in
Remark~\ref{rem:imaginary_modulus_branch}. In particular,
\begin{equation}\label{eq:clifford_parameter_integral}
\mathcal E(1,c)=2\pi^2
\qquad (0\leq c<1/2).
\end{equation}
\end{thm}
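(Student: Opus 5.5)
The plan is to start from the representation \eqref{eq:parameter_integral_theta}, which the paper has already derived from \eqref{eq:parameter_integral_z} for $0<c<1/2$. I will first check that it also holds at $c=0$. There $\delta=1$, so $A(h,1)=1$ and $B(h,1)=1-h^2$, and the integrand of \eqref{eq:lawson_parameter_rectangle_integral} is $\sqrt{h^2\cos^2s+\sin^2s}=\sqrt{1-(1-h^2)\cos^2 s}$. The integral over $[0,\pi]$ is twice the integral over $[0,\pi/2]$, and the substitution $s\mapsto\pi/2-s$ turns $\cos^2$ into $\sin^2$. Together with the $2\pi$ from the $t$-integration, this gives $\mathcal E(h,0)=4\pi\int_0^{\pi/2}\sqrt{1-(1-h^2)\sin^2\theta}\,\dif\theta$, which is exactly \eqref{eq:parameter_integral_theta} at $\delta=1$. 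So in every case
\[
\mathcal E(h,c)=4\pi\int_0^{\pi/2}\sqrt{A-B\sin^2\theta}\,\dif\theta ,\qquad A=A(h,\delta),\ B=B(h,\delta).
\]

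Next I factor out $\sqrt A$. For this I note $A>0$: we have $1+\delta>0$ and $(1-\delta)h^2\ge0$. This gives
\[
\mathcal E(h,c)=4\pi\sqrt A\int_0^{\pi/2}\sqrt{1-k^2\sin^2\theta}\,\dif\theta ,\qquad k^2=\frac{B}{A}.
\]
If $0\le h\le1$, then $B\ge0$. I also need $B\le A$, i.e.\ $k^2\le1$, so that the integral is the real $E(k)$ of \eqref{eq:complete_elliptic_integrals}. Here
\[
A-B=\frac{1+\delta+(1-\delta)h^2-2\delta+2\delta h^2}{2}=\frac{(1-\delta)+(1+\delta)h^2}{2}\ge0 .
\]
So \eqref{eq:parameter_integral_elliptic} follows immediately in this range. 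The only degenerate case is $h=0,\ \delta=1$, where $k=1$ and $E(1)=1$ still makes sense.

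If $h>1$, then $B<0$, so $k^2=-\eta^2$ with $\eta^2=-B/A>0$. The integrand $\sqrt{1+\eta^2\sin^2\theta}$ is real and positive. I must check that this integral equals the principal-branch value $E(i\eta)$ of Remark~\ref{rem:imaginary_modulus_branch}. The cleanest route is to verify the imaginary-modulus transformation directly. Substitute $\theta\mapsto\pi/2-\theta$ and write $1+\eta^2\cos^2\theta=(1+\eta^2)\bigl(1-\tfrac{\eta^2}{1+\eta^2}\sin^2\theta\bigr)$. This shows
\[
\int_0^{\pi/2}\sqrt{1+\eta^2\sin^2\theta}\,\dif\theta=\sqrt{1+\eta^2}\,E\!\left(\eta/\sqrt{1+\eta^2}\right),
\]
which is precisely the formula quoted from DLMF. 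Since the definition \eqref{eq:complete_elliptic_integrals} is analytic in $k^2$ for $k^2<1$, it agrees with this expression. Matching conventions here is the main, though mild, obstacle: one must be sure the branch convention in the remark is the one that makes the formula true. The elementary substitution just described settles it.

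Finally, for $h=1$ we have $B(1,\delta)=0$ and $A(1,\delta)=1$ for every $\delta$, so $k=0$. Then $E(0)=\pi/2$, which gives $\mathcal E(1,c)=4\pi\cdot\pi/2=2\pi^2$ for all $0\le c<1/2$. This is \eqref{eq:clifford_parameter_integral}, consistent with the area of the Clifford torus.
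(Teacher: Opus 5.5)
Your proposal is correct and follows the paper's route. The paper's proof likewise reads \eqref{eq:parameter_integral_elliptic} directly off \eqref{eq:parameter_integral_theta} and the definition \eqref{eq:complete_elliptic_integrals}, and obtains \eqref{eq:clifford_parameter_integral} from $A(1,\delta)=1$, $B(1,\delta)=0$; your additional checks (the $c=0$ case, $A>0$, $0\le k^2\le 1$ for $h\le 1$, and the imaginary-modulus identity for $h>1$) fill in details the paper leaves implicit.
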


\begin{proof}
\eqref{eq:parameter_integral_elliptic} follows directly from
\eqref{eq:parameter_integral_theta} and \eqref{eq:complete_elliptic_integrals}. If $h=1$, then $A(1,\delta)=1$ and
$B(1,\delta)=0$, so $E(0)=\pi/2$ gives
\eqref{eq:clifford_parameter_integral}.
\end{proof}

\subsection{Willmore energy of the quotient surfaces}

We now pass from the parameter-rectangle integral to the Willmore energies
of the quotient surfaces by the automorphism groups. For $0<c<1/2$, the result is determined by the index of the period lattice
$\Lambda_X$ in the rectangular lattice $\Lambda_{\mathrm{par}}$.

\begin{thm}\label{thm:total_energy_lattice_index}
Let $h\geq0$ and $0<c<1/2$, and suppose that
$\operatorname{Hel}_c^h$ is compact.
Let $\Lambda_X=\operatorname{Per}(X_c^h)$ be the period lattice
defined in \eqref{eq:translation_period_set}, and let
$Q_c^h=\mathbb R^2/\Lambda_X$ be the quotient torus defined by
\eqref{eq:positive_c_automorphism_quotient}. With $q=p/r$ and
$h=j/\nu$ as in \eqref{eq:reduced_closing_data}, the Willmore energy is computed by
\begin{equation}\label{eq:translation_quotient_energy}
\mathcal W(Q_c^h)
=
\operatorname{lcm}(r,\nu)\,\mathcal E(h,c),
\end{equation}
where $\mathcal E(h,c)$ is written in
\eqref{eq:parameter_integral_elliptic}.
\end{thm}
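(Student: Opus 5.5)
Since $\bar X:Q_c^h\to\mathbb S^3$ is minimal, $H\equiv0$ and $\mathcal W(Q_c^h)$ equals the area of $Q_c^h$ with respect to the induced metric. We compute this area by lifting to the parameter plane. The induced area measure of $\bar X$ pulls back under the covering $\mathbb R^2\to\mathbb R^2/\Lambda_X$ to $\alpha(z_c^h(s))\,\dif s\,\dif t$. Here $\alpha$ is the density \eqref{eq:alpha2_final}, which is strictly positive by the determinant formula in the proof of Proposition~\ref{prop:global_minimal_immersion}. Therefore
\[
\mathcal W(Q_c^h)=\int_{P_X}\alpha(z_c^h(s))\,\dif s\,\dif t,
\]
where $P_X$ is any measurable fundamental domain for $\Lambda_X$ acting on $\mathbb R^2$.

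The key step compares this with the rectangle integral $\mathcal E(h,c)$. The density $\alpha(z_c^h(s))$ is invariant under the full rectangular lattice $\Lambda_{\mathrm{par}}=\langle(L,0),(0,2\pi)\rangle_{\mathbb Z}$, because $z_c^h$ is $L$-periodic and $\alpha$ does not depend on $t$. The rectangle $R=[s_0,s_0+L)\times[0,2\pi)$ is a fundamental domain for $\Lambda_{\mathrm{par}}$. Set $N=[\Lambda_{\mathrm{par}}:\Lambda_X]$ and choose coset representatives $v_1,\dots,v_N$ of $\Lambda_{\mathrm{par}}/\Lambda_X$. Then $\bigsqcup_i (R+v_i)$ is a fundamental domain for $\Lambda_X$. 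By $\Lambda_{\mathrm{par}}$-invariance each translate $R+v_i$ contributes exactly $\mathcal E(h,c)$, so
\[
\mathcal W(Q_c^h)=N\,\mathcal E(h,c).
\]
This uses that $\alpha\ge0$ is integrable on compact sets and that the integral over a fundamental domain does not depend on which domain is chosen. Both facts are standard: any two fundamental domains of the same lattice are related by a piecewise-translation decomposition, up to measure zero.

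Finally, Lemma~\ref{lem:period_lattice_index} gives $N=\operatorname{lcm}(r,\nu)$, which yields \eqref{eq:translation_quotient_energy}. The closed form in \eqref{eq:parameter_integral_elliptic} then follows from Theorem~\ref{thm:parameter_rectangle_integral}.

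The only point needing care is the second step: justifying that the union of coset translates of $R$ tiles a fundamental domain for $\Lambda_X$. I would argue it directly. Every point of $\mathbb R^2$ is congruent modulo $\Lambda_{\mathrm{par}}$ to a unique point of $R$, and every element of $\Lambda_{\mathrm{par}}$ lies in a unique coset $v_i+\Lambda_X$. Together these show that the translates $R+v_i$ are pairwise $\Lambda_X$-inequivalent and that they cover $\mathbb R^2$ modulo $\Lambda_X$.
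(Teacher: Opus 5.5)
Your proposal is correct and follows the paper's proof essentially step for step. Minimality reduces the Willmore energy to area, and a fundamental domain for $\Lambda_X$ is assembled from $N=[\Lambda_{\mathrm{par}}:\Lambda_X]$ coset translates of the rectangle $[s_0,s_0+L)\times[0,2\pi)$, each contributing $\mathcal E(h,c)$ by $\Lambda_{\mathrm{par}}$-invariance of the density; Lemma~\ref{lem:period_lattice_index} then gives $N=\operatorname{lcm}(r,\nu)$. Your explicit check that these translates tile a fundamental domain is a small addition that the paper leaves implicit.
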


\begin{proof}
Let
\[
\bar X_c^h:Q_c^h\longrightarrow\mathbb S^3
\]
be the immersion induced by $X_c^h$. Since $\bar X_c^h$ is minimal,
\eqref{eq:willmore_functional} and
\eqref{eq:quotient_willmore_energy} identify its Willmore energy with its
area. Recall from
\eqref{eq:translation_period_set} that
\[
\Lambda_{\mathrm{par}}
=
\langle(L,0),(0,2\pi)\rangle_{\mathbb Z}.
\]
Fix $s_0\in\mathbb R$ and let
\[
F_0=[s_0,s_0+L)\times[0,2\pi)
\]
be its standard fundamental domain. By Lemma~\ref{lem:period_lattice_index},
\[
N:=[\Lambda_{\mathrm{par}}:\Lambda_X]
=\operatorname{lcm}(r,\nu).
\]
Choose representatives
$\eta_1,\ldots,\eta_N\in\Lambda_{\mathrm{par}}$ for the quotient
$\Lambda_{\mathrm{par}}/\Lambda_X$. Then, up to sets of measure zero,
\[
F_X
=
\bigcup_{i=1}^N(F_0+\eta_i)
\]
is a fundamental domain for $\Lambda_X$.

The area density $\alpha(z(s))$, given by
\eqref{eq:alpha2_final}, is $\Lambda_{\mathrm{par}}$-invariant, and
\eqref{eq:parameter_rectangle_integral} gives
\[
\int_{F_0}\alpha(z(s))\,\dif s\,\dif t=\mathcal E(h,c).
\]
Therefore
\[
\mathcal W(Q_c^h)
=
\sum_{i=1}^N
\int_{F_0+\eta_i}\alpha(z(s))\,\dif s\,\dif t
=
N\,\mathcal E(h,c).
\]
Using $N=\operatorname{lcm}(r,\nu)$ gives
\[
\mathcal W(Q_c^h)
=
\operatorname{lcm}(r,\nu)\,\mathcal E(h,c),
\]
which proves \eqref{eq:translation_quotient_energy}.
\end{proof}

For $c=0$, the automorphism group gives a simpler energy
formula.

\begin{thm}
\label{thm:lawson_quotient_energy}
Let $h=j/\nu>0$, where $j$ and $\nu$ are coprime positive integers, and
let $Q_0^h$ be the quotient surface defined by
\eqref{eq:lawson_automorphism_quotient}. Then
\begin{equation}\label{eq:lawson_quotient_energy}
\mathcal W(Q_0^h)
=
\nu\,\mathcal E(h,0).
\end{equation}
\end{thm}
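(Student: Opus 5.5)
The plan is to compute the area of $Q_0^h=\mathbb R^2/\mathcal G_0^h$ directly: pick a measurable fundamental domain for $\mathcal G_0^h$ and integrate the area density $\sqrt{h^2\cos^2 s+\sin^2 s}$ from Proposition~\ref{prop:global_minimal_immersion} over it. Since the immersion is minimal, $\mathcal W(Q_0^h)$ equals this area. The density is invariant under $s\mapsto s+\pi$, $s\mapsto -s$, and all $t$-translations. This invariance is what lets us compare with $\mathcal E(h,0)$ from \eqref{eq:lawson_parameter_rectangle_integral}, which is the integral over $[0,\pi)\times[0,2\pi)$. So the claim reduces to showing that a fundamental domain for $\mathcal G_0^h$ has the same density-weighted measure as $\nu$ copies of the rectangle $R_0=[0,\pi)\times[0,2\pi)$.

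Step one: compute the area of a fundamental domain of $\mathcal T_0^h$ and take half when the index is two. Work with the auxiliary lattice $\Lambda_\pi:=\langle(\pi,0),(0,2\pi)\rangle_{\mathbb Z}$, whose fundamental domain is $R_0$ and under which the density is invariant. By the same argument as in Theorem~\ref{thm:total_energy_lattice_index}, the area of $\mathbb R^2/\Lambda_0^h$ is $[\Lambda_\pi:\Lambda_0^h]\,\mathcal E(h,0)$ once we check $\Lambda_0^h\le\Lambda_\pi$. This inclusion holds in both cases of \eqref{eq:lawson_translation_lattice}: $(\pi,\pi\nu)$ lies in $\Lambda_\pi$ exactly when $\nu$ is even, but in the odd--odd case $\nu$ is odd. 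That is a problem, so I will instead use the coarser lattice $\Lambda'=\langle(\pi,0),(0,\pi)\rangle_{\mathbb Z}$. Its covolume is $\pi^2$, and its fundamental domain $[0,\pi)\times[0,\pi)$ has weighted area $\mathcal E(h,0)/2$, because the density is independent of $t$. Both lattices in \eqref{eq:lawson_translation_lattice} are contained in $\Lambda'$. The index equals the covolume ratio, which is $2\pi^2\nu/\pi^2=2\nu$ in the odd--odd case and $4\pi^2\nu/\pi^2=4\nu$ in the mixed case. The area of $\mathbb R^2/\Lambda_0^h$ is therefore $\nu\,\mathcal E(h,0)$ in the odd--odd case and $2\nu\,\mathcal E(h,0)$ in the mixed case.

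Step two: pass from $\mathcal T_0^h$ to $\mathcal G_0^h$. In the odd--odd case $\mathcal G_0^h=\mathcal T_0^h$ by \eqref{eq:lawson_group_odd_odd}, so we are done. In the mixed case, \eqref{eq:lawson_group_mixed_parity} shows that $\mathbb R^2/\Lambda_0^h\to Q_0^h$ is a two-sheeted covering, by Lemma~\ref{lem:planar_quotient_groups}(2). The induced immersion on the double cover is the pullback of $\bar X$, so its area is twice the area of $Q_0^h$. This relies on $\rho_{j,\nu}$ preserving the induced metric, which holds because $X_0^h\circ\rho_{j,\nu}=X_0^h$. Hence $\mathcal W(Q_0^h)=\tfrac12\cdot 2\nu\,\mathcal E(h,0)=\nu\,\mathcal E(h,0)$ in both cases.

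The main obstacle is the bookkeeping in step one. One has to justify that a union of translates of the small square by coset representatives forms a fundamental domain up to measure zero, exactly as in Theorem~\ref{thm:total_energy_lattice_index}, and one has to get the covolumes right. Since the density is not constant in $s$, these must be weighted areas: I will use invariance of the density under $\Lambda'$ to reduce each translate to the same integral $\mathcal E(h,0)/2$. This avoids needing any explicit shape for the fundamental domain.
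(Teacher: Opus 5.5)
Your proof is correct and follows the paper's argument. You compute the density-weighted area of a fundamental domain of the translation lattice $\Lambda_0^h$, using that the density is $\pi$-periodic in $s$ and independent of $t$; this gives $\nu\,\mathcal E(h,0)$ in the odd--odd case and $2\nu\,\mathcal E(h,0)$ in the mixed case, and you then halve via the index-two glide reflection.

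The only difference is cosmetic. You count $[\Lambda':\Lambda_0^h]\in\{2\nu,4\nu\}$ translates of the square $[0,\pi)^2$, whereas the paper integrates directly over the parallelogram spanned by $(\pi,\pi\nu)$ and $(0,2\pi\nu)$ (or the $2\pi\times 2\pi\nu$ rectangle). Two small fixes: $\Lambda'$ is finer, not coarser, than $\langle(\pi,0),(0,2\pi)\rangle_{\mathbb Z}$, and your initial claim that $\Lambda_0^h$ lies in the latter should simply be deleted.
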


\begin{proof}
Let
\[
\bar X_0^h:Q_0^h\longrightarrow\mathbb S^3
\]
be the immersion induced by $X_0^h$. Since $\bar X_0^h$ is minimal,
\eqref{eq:willmore_functional} and
\eqref{eq:quotient_willmore_energy} identify its Willmore energy with its
area. Set
\[
a(s):=\sqrt{h^2\cos^2s+\sin^2s},
\qquad
a(s+\pi)=a(s).
\]
By \eqref{eq:lawson_parameter_rectangle_integral},
\[
\mathcal E(h,0)
=
2\pi\int_0^\pi a(s)\,\dif s.
\]

Suppose first that $j$ and $\nu$ are both odd. By
Proposition~\ref{prop:lawson_complete_symmetry_group}, the
automorphism group is the translation group associated with the lattice
generated by
\[
v_1=(\pi,\pi\nu),
\qquad
v_2=(0,2\pi\nu).
\]
The half-open parallelogram
\[
F=\{xv_1+yv_2:0\le x,y<1\}
\]
is therefore a fundamental domain for the automorphism group. Since
the area density depends only on the first coordinate,
\[
\begin{aligned}
\mathcal W(Q_0^h)
&=
\int_0^1\int_0^1 a(\pi x)\,2\pi^2\nu\,\dif y\,\dif x\\
&=
2\pi\nu\int_0^\pi a(s)\,\dif s
=
\nu\,\mathcal E(h,0).
\end{aligned}
\]

Suppose now that exactly one of $j$ and $\nu$ is even. A fundamental
domain for the translation subgroup $\mathcal T_0^h$ defined in
\eqref{eq:lawson_translation_subgroup} is determined by
\[
v_1=(2\pi,0),
\qquad
v_2=(0,2\pi\nu),
\]
and the integral of the area density over this domain is
\[
2\pi\nu\int_0^{2\pi}a(s)\,\dif s
=
4\pi\nu\int_0^\pi a(s)\,\dif s
=
2\nu\,\mathcal E(h,0).
\]
By \eqref{eq:lawson_group_mixed_parity},
$\mathcal T_0^h$ has index two in the automorphism group
$\mathcal G_0^h$. Since the area density is
$\mathcal G_0^h$-invariant, its integral over a fundamental domain for
$\mathcal G_0^h$ is half the preceding integral. Hence
\[
\mathcal W(Q_0^h)
=
\nu\,\mathcal E(h,0),
\]
which proves \eqref{eq:lawson_quotient_energy}.
\end{proof}

Theorems~\ref{thm:parameter_rectangle_integral},
\ref{thm:total_energy_lattice_index}, and
\ref{thm:lawson_quotient_energy} together prove
Theorem~\ref{mainthm:willmore}.

\subsection{Elliptic-integral estimates}

We collect the elliptic-integral estimates used in the monotonicity and
energy bounds below. The complete elliptic integrals $K$ and $E$ are defined
in \eqref{eq:complete_elliptic_integrals}. For $0<k<1$,
we use the standard derivative identities
\cite[Eqs.~(19.4.1)--(19.4.2)]{DLMF}
\begin{equation}\label{eq:elliptic_derivatives}
\begin{aligned}
E'(k)
&=\frac{E(k)-K(k)}{k},\\
K'(k)
&=\frac{E(k)}{k(1-k^2)}-\frac{K(k)}{k},\\
\frac{\dif E}{\dif(k^2)}
&=\frac{E(k)-K(k)}{2k^2}.
\end{aligned}
\end{equation}

\begin{lem}\label{lem:E_lower_bound}
For $0\le k\le1$,
\[
    E(k)^2\ge 2-k^2,
\]
with equality if and only if $k=1$.
\end{lem}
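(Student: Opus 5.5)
Set $m=k^2\in[0,1]$ and $f(m)=E(k)^2-(2-m)$. The plan is to show that $f$ is strictly decreasing on $[0,1)$ and vanishes at $m=1$. This gives $f>0$ on $[0,1)$ and $f(1)=0$, which is exactly the inequality together with the equality case.

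The endpoint values are immediate. We have $E(0)=\pi/2$, so $f(0)=\pi^2/4-2>0$. We have $E(1)=\int_0^{\pi/2}\cos\theta\,\dif\theta=1$, so $f(1)=1-1=0$. Since $E$ is continuous on $[0,1]$ (dominated convergence), $f$ is continuous on $[0,1]$.

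For the monotonicity, use the third identity in \eqref{eq:elliptic_derivatives} for $0<m<1$:
\[
f'(m)=2E\,\frac{\dif E}{\dif m}+1=\frac{E(E-K)}{m}+1 .
\]
So $f'(m)<0$ is equivalent to $E(K-E)>m$. This is the key step and the main obstacle.

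To prove it, write $K-E=\int_0^{\pi/2}\frac{m\sin^2\theta}{\sqrt{1-m\sin^2\theta}}\,\dif\theta$. The claim then reduces to
\[
E\cdot\int_0^{\pi/2}\frac{\sin^2\theta}{\sqrt{1-m\sin^2\theta}}\,\dif\theta>1 .
\]
I would first try Cauchy--Schwarz. With $u=\sqrt{1-m\sin^2\theta}$ we have
\[
\Bigl(\int\sin\theta\,\dif\theta\Bigr)^2\le\int u\,\dif\theta\cdot\int\frac{\sin^2\theta}{u}\,\dif\theta ,
\]
where all integrals are over $[0,\pi/2]$. Since $\int_0^{\pi/2}\sin\theta\,\dif\theta=1$, this gives $1\le E\cdot\int_0^{\pi/2}\sin^2\theta/u\,\dif\theta$. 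Equality in Cauchy--Schwarz would force $\sin\theta/\sqrt u$ to be proportional to $\sqrt u$, that is, $u\propto\sin\theta$. This is impossible for $m<1$, since $u>0$ at $\theta=0$ while $\sin 0=0$. So the inequality is strict, and $f'<0$ on $(0,1)$.

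Combining this with continuity of $f$ on $[0,1]$, $f$ is strictly decreasing there, so $f(m)>f(1)=0$ for $m<1$. This proves $E(k)^2\ge 2-k^2$ with equality only at $k=1$. The only delicate point is justifying the differentiation and the derivative identity on the open interval, which is standard from \eqref{eq:elliptic_derivatives}.
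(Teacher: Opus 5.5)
Your proposal is correct and follows the paper's proof essentially line by line. The paper differentiates $G(k)=E(k)^2+k^2-2$ in $k$ rather than in $m=k^2$, and reduces $G'<0$ on $(0,1)$ to $E(K-E)>k^2$ via the derivative identities. It proves this with the same Cauchy--Schwarz estimate against $\int_0^{\pi/2}\sin\theta\,\mathrm{d}\theta=1$, with strictness from an equivalent non-proportionality argument. Your explicit use of continuity of $f$ on $[0,1]$, to pass from $f'<0$ to $f>f(1)=0$ on $[0,1)$, makes precise a step the paper leaves implicit.
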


\begin{proof}
Let
\[
    G(k)=E(k)^2+k^2-2.
\]
Then $G(1)=0$. For $0<k<1$, using
\eqref{eq:elliptic_derivatives}, we obtain
\[
    G'(k)
    =
    2k-\frac{2E(k)\bigl(K(k)-E(k)\bigr)}{k}.
\]
Equivalently,
\[
    G'(k)
    =
    2k\left(
    1-
    E(k)\frac{K(k)-E(k)}{k^2}
    \right).
\]
By the Cauchy--Schwarz inequality,
\[
\begin{aligned}
E(k)\frac{K(k)-E(k)}{k^2}
&=
\left(\int_0^{\pi/2}\sqrt{1-k^2\sin^2\theta}\,\dif\theta\right)
\left(\int_0^{\pi/2}
\frac{\sin^2\theta}{\sqrt{1-k^2\sin^2\theta}}\,\dif\theta\right)\\
&\ge
\left(\int_0^{\pi/2}\sin\theta\,\dif\theta\right)^2
=
1.
\end{aligned}
\]
The inequality is strict for $0<k<1$: equality in
Cauchy--Schwarz would require
$\sqrt{1-k^2\sin^2\theta}$ to be proportional almost everywhere to
$\sin^2\theta/\sqrt{1-k^2\sin^2\theta}$, equivalently
$1-k^2\sin^2\theta=C\sin^2\theta$ for a constant $C$, which is impossible
on $[0,\pi/2]$. Hence $G'(k)<0$ on $(0,1)$.
Therefore, since $G(1)=0$, we have
\[
    G(k)>0
\]
for $0\le k<1$. This proves
\[
    E(k)^2\ge 2-k^2,
\]
with equality if and only if $k=1$.
\end{proof}

\begin{lem}\label{lem:elliptic_inequality}
For $0<k<1$,
\[
    (2-k^2)K(k)>2E(k).
\]
\end{lem}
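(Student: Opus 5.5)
Set $f(k)=(2-k^2)K(k)-2E(k)$ on $[0,1)$. The plan is to show that $f(0)=0$ and that $f$ is strictly increasing on $(0,1)$, and then conclude $f(k)>0$ for $0<k<1$. The endpoint value is immediate: $K(0)=E(0)=\pi/2$ gives $f(0)=2\cdot\frac{\pi}{2}-2\cdot\frac{\pi}{2}=0$.

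For the monotonicity, I will differentiate using the identities \eqref{eq:elliptic_derivatives}:
\[
f'(k)=-2kK+(2-k^2)\left(\frac{E}{k(1-k^2)}-\frac{K}{k}\right)-\frac{2(E-K)}{k}.
\]
Collecting terms, the coefficient of $K$ is
\[
-2k-\frac{2-k^2}{k}+\frac{2}{k}=-2k+k=-k,
\]
and the coefficient of $E$ is
\[
\frac{2-k^2}{k(1-k^2)}-\frac{2}{k}=\frac{2-k^2-2+2k^2}{k(1-k^2)}=\frac{k}{1-k^2}.
\]
Hence
\[
f'(k)=\frac{k}{1-k^2}\bigl(E(k)-(1-k^2)K(k)\bigr).
\]

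It therefore suffices to show that $E(k)>(1-k^2)K(k)$ for $0<k<1$. This follows from an integrand comparison:
\[
E-(1-k^2)K=\int_0^{\pi/2}\frac{(1-k^2\sin^2\theta)-(1-k^2)}{\sqrt{1-k^2\sin^2\theta}}\,\dif\theta
=k^2\int_0^{\pi/2}\frac{\cos^2\theta}{\sqrt{1-k^2\sin^2\theta}}\,\dif\theta>0.
\]

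Since $f'>0$ on $(0,1)$ and $f$ is continuous at $0$ with $f(0)=0$, the mean value theorem gives $f(k)>0$ for $0<k<1$, which is the claim.

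The only real obstacle is the algebraic simplification of $f'$, and that is routine. As an independent check, one can expand in series. Using $K=\frac{\pi}{2}\bigl(1+\frac{k^2}{4}+\frac{9k^4}{64}+\cdots\bigr)$ and $E=\frac{\pi}{2}\bigl(1-\frac{k^2}{4}-\frac{3k^4}{64}-\cdots\bigr)$, one finds
\[
f=\frac{\pi}{2}\left(\frac{k^4}{32}+\cdots\right)>0
\]
for small $k$, which is consistent with the conclusion.
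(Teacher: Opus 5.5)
Your proof is correct and follows the paper's argument essentially verbatim. You use the same auxiliary function vanishing at $k=0$, the same derivative $f'(k)=k\bigl(E/(1-k^2)-K\bigr)$, and the same integrand identity $E-(1-k^2)K=k^2\int_0^{\pi/2}\cos^2\theta\,(1-k^2\sin^2\theta)^{-1/2}\,\dif\theta>0$.

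One small slip in your optional series check: the leading term is $\frac{\pi}{2}\cdot\frac{k^4}{8}$, not $\frac{\pi}{2}\cdot\frac{k^4}{32}$. This is consistent with $f'(k)\approx \pi k^3/4$, and it does not affect the proof.
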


\begin{proof}
Set $\Psi(k)=(2-k^2)K(k)-2E(k)$. This function extends
continuously to $k=0$ with $\Psi(0)=0$, and
\eqref{eq:elliptic_derivatives} gives
\[
    \Psi'(k)=k\left(\frac{E(k)}{1-k^2}-K(k)\right).
\]
Moreover
\[
    \frac{E(k)}{1-k^2}-K(k)
    =\frac{k^2}{1-k^2}
      \int_0^{\pi/2}\frac{\cos^2\theta}{\sqrt{1-k^2\sin^2\theta}}\,\dif\theta
    >0.
\]
Thus $\Psi'>0$ on $(0,1)$. Since $\Psi(0)=0$, it follows that
$\Psi(k)>0$ for every $0<k<1$.
\end{proof}

\begin{lem}\label{lem:boundary_function}
For $0<\delta<1$, set
\begin{equation}\label{eq:boundary_function_definition}
    \mathcal F(\delta)
    =\sqrt{\frac{1+\delta}{2}}\,
     E\left(\sqrt{\frac{2\delta}{1+\delta}}\right).
\end{equation}
Then $\mathcal F(\delta)>1$, and $\mathcal F$ is strictly decreasing in $\delta$.
\end{lem}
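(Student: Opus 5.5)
Set $k=k(\delta)=\sqrt{2\delta/(1+\delta)}\in(0,1)$, so that $\mathcal F(\delta)^2=\tfrac{1+\delta}{2}E(k)^2$ and $1-k^2=(1-\delta)/(1+\delta)$, i.e.\ $\tfrac{1+\delta}{2}=\tfrac{1}{2-k^2}$. Thus $\mathcal F(\delta)^2=E(k)^2/(2-k^2)$. Since $0<k<1$, Lemma~\ref{lem:E_lower_bound} gives $E(k)^2>2-k^2$ strictly, which is exactly $\mathcal F(\delta)>1$. This settles the first claim with no further work.

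For monotonicity, $\delta\mapsto k^2=2\delta/(1+\delta)$ is strictly increasing on $(0,1)$, since its derivative is $2/(1+\delta)^2>0$. So it suffices to show that
\[
\Phi(k):=\frac{E(k)^2}{2-k^2}
\]
is strictly decreasing in $k\in(0,1)$. Differentiating and using $E'(k)=(E-K)/k$ from \eqref{eq:elliptic_derivatives},
\[
\Phi'(k)=\frac{2E\,\frac{E-K}{k}(2-k^2)+2kE^2}{(2-k^2)^2}
=\frac{2E}{k(2-k^2)^2}\Bigl((E-K)(2-k^2)+k^2E\Bigr).
\]
The bracket simplifies to $2E-(2-k^2)K$, which is negative by Lemma~\ref{lem:elliptic_inequality}. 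Since $E>0$, we get $\Phi'(k)<0$ on $(0,1)$. Composing with the increasing map $\delta\mapsto k$ then shows that $\mathcal F=\sqrt{\Phi\circ k}$ is strictly decreasing.

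The only delicate step is the algebraic simplification of the bracket into the precise combination appearing in Lemma~\ref{lem:elliptic_inequality}. Everything else is a direct substitution.

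As an alternative route, one could differentiate $\mathcal F$ directly in $\delta$ using $\dif E/\dif(k^2)$. The reduction to $\Phi(k)$ is cleaner, though, because it makes both earlier lemmas apply verbatim.
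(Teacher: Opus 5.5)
Your proposal is correct and follows essentially the same route as the paper: the same substitution $k^2=2\delta/(1+\delta)$ reduces $\mathcal F$ to $E(k)/\sqrt{2-k^2}$, Lemma~\ref{lem:E_lower_bound} gives $\mathcal F>1$, and the sign of the derivative comes from Lemma~\ref{lem:elliptic_inequality} through the combination $2E-(2-k^2)K$. The only cosmetic difference is that you differentiate $\Phi=\mathcal F^2$ rather than $\mathcal F$ itself; your algebraic simplification of the bracket is correct.
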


\begin{proof}
Let $k^2=2\delta/(1+\delta)$. Then
\[
    \delta=\frac{k^2}{2-k^2},
    \qquad
    \mathcal F(\delta)=\frac{E(k)}{\sqrt{2-k^2}}.
\]
Lemma~\ref{lem:E_lower_bound} gives $\mathcal F(\delta)>1$.  Differentiating,
\[
    \frac{\dif}{\dif k}\left(\frac{E(k)}{\sqrt{2-k^2}}\right)
    =\frac{2E(k)-(2-k^2)K(k)}{k(2-k^2)^{3/2}}<0
\]
by Lemma~\ref{lem:elliptic_inequality}. Since $\delta=k^2/(2-k^2)$ is
strictly increasing in $k$, the assertion follows.
\end{proof}

\subsection{Monotonicity and energy bounds}

\begin{thm}\label{thm:E_monotonic_h}
For each fixed $c\in[0,1/2)$, the function
$h\mapsto \mathcal E(h,c)$ is strictly increasing on $[0,1]$.
\end{thm}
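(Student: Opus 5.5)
The plan is to work directly from the integral representation \eqref{eq:parameter_integral_theta}, which for $0<c<1/2$ reads
\[
\mathcal E(h,c)=4\pi\int_0^{\pi/2}\sqrt{A(h,\delta)-B(h,\delta)\sin^2\theta}\,\dif\theta,\qquad \delta=\sqrt{1-4c^2}\in(0,1),
\]
and which for $c=0$ (so $\delta=1$) agrees with the Lawson rectangle integral \eqref{eq:lawson_parameter_rectangle_integral}. We do not need the elliptic form \eqref{eq:parameter_integral_elliptic} at all; pointwise monotonicity of the integrand in $h$ suffices.

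First, expand the radicand using \eqref{eq:AB}:
\[
A-B\sin^2\theta=\frac{1+\delta}{2}-\delta\sin^2\theta+h^2\Bigl(\frac{1-\delta}{2}+\delta\sin^2\theta\Bigr).
\]
The coefficient of $h^2$, namely $\tfrac{1-\delta}{2}+\delta\sin^2\theta$, is exactly $z^2$ under the substitution preceding \eqref{eq:AB}. It is therefore $\geq z_{\min}^2=(1-\delta)/2$, and it is strictly positive for every $\theta$ when $0<c<1/2$. The constant term is $1-z^2\geq 1-z_{\max}^2>0$. Hence the radicand is a positive, strictly increasing function of $h\geq0$ for each fixed $\theta$.

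Second, conclude strict monotonicity of the integral. For $0\le h_1<h_2$, the integrand at $h_2$ strictly exceeds that at $h_1$ for every $\theta\in[0,\pi/2]$ when $c>0$. In the case $c=0$, i.e.\ $\delta=1$, the coefficient becomes $\sin^2\theta$, which vanishes only at $\theta=0$, a null set. In both cases the strict pointwise inequality on a set of full measure gives $\mathcal E(h_1,c)<\mathcal E(h_2,c)$.

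For $c=0$ one can also read this off directly from $\sqrt{h^2\cos^2 s+\sin^2 s}$, which is strictly increasing in $h$ wherever $\cos s\neq0$.

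The only delicate point is the endpoint/degenerate case $c=0$, together with making sure the radicand never vanishes, so that strictness survives. This is handled by the null-set remark above. The argument in fact gives monotonicity on all of $[0,\infty)$, and restricting to $[0,1]$ is harmless.
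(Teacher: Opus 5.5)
Your proof is correct, but it takes a different and more elementary route than the paper.

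The paper passes to the elliptic form $\mathcal E=4\pi\sqrt S\,E(k)$ of \eqref{eq:parameter_integral_elliptic}. It differentiates in $h$ with the identities \eqref{eq:elliptic_derivatives} to get $\partial_h\mathcal E=\frac{2\pi h}{\sqrt S}\bigl((1-\delta)E(k)+\frac{2\delta(K(k)-E(k))}{Sk^2}\bigr)$. Positivity of this expression uses $K>E$ and requires $0<k<1$, hence $0<h<1$. The endpoints $h=0,1$ are then recovered by a separate continuity-and-limit argument.

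You instead write the radicand in \eqref{eq:parameter_integral_theta} as $a(\theta)+h^2b(\theta)$, with $a\ge0$ and $b>0$ almost everywhere, and compare integrands pointwise. In fact your pointwise derivative $h\,b(\theta)/\sqrt{a(\theta)+h^2b(\theta)}$ integrates to exactly the paper's expression. Your version needs no elliptic identities and no endpoint limits, and it treats $c=0$ uniformly. It also gives monotonicity on all of $[0,\infty)$ without the imaginary-modulus branch of Remark~\ref{rem:imaginary_modulus_branch}.

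The paper's elliptic route is natural mainly because that machinery is unavoidable in the $c$-direction (Theorem~\ref{thm:E_monotonic_c}). There $\partial_\delta$ of the radicand equals $(1-h^2)(\tfrac12-\sin^2\theta)$, which changes sign. So pointwise comparison fails, and Lemma~\ref{lem:elliptic_inequality} is needed.

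There are two harmless slips.
\begin{enumerate}
\item After the reflection $\theta\mapsto\pi/2-\theta$, the coefficient of $h^2$ corresponds to $1-z^2$ and the constant term to $z^2$; you have them interchanged. This does not matter, because $z_{\min}^2+z_{\max}^2=1$ makes both range over $[z_{\min}^2,z_{\max}^2]$.
\item For $c=0$ the bound $1-z_{\max}^2>0$ fails: the constant term $\cos^2\theta$ vanishes at $\theta=\pi/2$. But strictness never required a nonvanishing radicand, since $\sqrt{\cdot}$ is strictly increasing on $[0,\infty)$.
\end{enumerate}
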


\begin{proof}
Put $\delta=\sqrt{1-4c^2}$ and, for $0<h<1$, set
\begin{equation}\label{eq:Sk}
  S:=A(h,\delta),
\qquad
k=\sqrt{\frac{B(h,\delta)}{S}}, 
\end{equation}
where $A$ and $B$ are defined in \eqref{eq:AB}.
For $0<h<1$ and $0\le c<1/2$, one has $0<k<1$. By
\eqref{eq:parameter_integral_elliptic},
$\mathcal E(h,c)=4\pi\sqrt S\,E(k)$. Using
\eqref{eq:elliptic_derivatives} together with
\[
    S_h=h(1-\delta),
    \qquad
    (k^2)_h=-\frac{2h\delta}{S^2},
\]
we obtain
\[
\frac{\partial \mathcal E}{\partial h}
=\frac{2\pi h}{\sqrt S}
\left((1-\delta)E(k)+\frac{2\delta(K(k)-E(k))}{Sk^2}\right).
\]
The first term in parentheses is nonnegative, while the second is
strictly positive because $\delta>0$, $0<k<1$, and $K(k)>E(k)$.
Hence $\mathcal E$ is strictly increasing on $(0,1)$.

It remains to include the endpoints. Fix $0<h<1$, and choose
$h_-\in(0,h)$ and $h_+\in(h,1)$. Strict monotonicity on $(0,1)$
gives
\[
    \mathcal E(h_-,c)<\mathcal E(h,c)<\mathcal E(h_+,c).
\]
Continuity at $h=0$ and $h=1$ follows from the integral representation
\eqref{eq:parameter_integral_theta}. Together with monotonicity on $(0,1)$, taking limits $h_-\rightarrow 0+$ and $h_+\rightarrow1-$, we get
\[
    \mathcal E(0,c)<\mathcal E(h,c)<\mathcal E(1,c)=2\pi^2,
\]
and strict monotonicity holds on the closed interval $[0,1]$.
\end{proof}

\begin{thm}\label{thm:E_monotonic_c}
For fixed $0\leq h<1$, the function
$c\mapsto \mathcal E(h,c)$ is strictly increasing on $[0,1/2)$. For
$h=1$, the constant value $\mathcal E(1,c)\equiv 2\pi^2$ is given by
\eqref{eq:clifford_parameter_integral}.
\end{thm}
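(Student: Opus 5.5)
The plan is to work directly with the integral representation \eqref{eq:parameter_integral_theta} rather than with the elliptic-integral form. The dependence on $c$ enters only through $\delta=\sqrt{1-4c^2}$, which is strictly decreasing in $c$ on $[0,1/2)$ and maps it onto $(0,1]$. So it suffices to show that, for fixed $0\le h<1$, the function $\delta\mapsto\mathcal E(h,c)$ is strictly decreasing on $(0,1]$. By \eqref{eq:parameter_integral_theta} this representation also covers $c=0$, consistently with \eqref{eq:lawson_parameter_rectangle_integral}.

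First I rewrite the radicand using $\sin^2\theta=(1-\cos2\theta)/2$. With $A,B$ as in \eqref{eq:AB},
\[
A(h,\delta)-B(h,\delta)\sin^2\theta=a+b\,\delta\cos2\theta,
\qquad
a=\frac{1+h^2}{2},\quad b=\frac{1-h^2}{2}>0 .
\]
This makes $\delta$ appear linearly. For $0<\delta<1$ the radicand satisfies $a+b\delta\cos2\theta\ge a-b\delta>a-b=h^2\ge0$, so it is bounded away from zero uniformly for $\delta$ in compact subsets of $(0,1)$. Differentiation under the integral sign is therefore justified, and it gives
\[
\frac{\partial\mathcal E}{\partial\delta}
=2\pi b\int_0^{\pi/2}\frac{\cos2\theta}{\sqrt{a+b\delta\cos2\theta}}\,\dif\theta .
\]

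The key step is the sign of this integral. I would use the involution $\theta\mapsto\pi/2-\theta$, which sends $\cos2\theta$ to $-\cos2\theta$. Folding the integral onto $[0,\pi/4]$ with $x=\cos2\theta\ge0$, the integrand pairs to
\[
x\Bigl(\frac{1}{\sqrt{a+b\delta x}}-\frac{1}{\sqrt{a-b\delta x}}\Bigr).
\]
This is strictly negative for $x>0$, because $b\delta>0$. Hence $\partial\mathcal E/\partial\delta<0$ on $(0,1)$, and since $\dif\delta/\dif c<0$ for $0<c<1/2$, the function $\mathcal E(h,\cdot)$ is strictly increasing on $(0,1/2)$.

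The only delicate point, which I expect to be the main (mild) obstacle, is the endpoint $c=0$, i.e.\ $\delta=1$. When $h=0$ the radicand $a+b\cos2\theta=\cos^2\theta$ vanishes at $\theta=\pi/2$, so the derivative argument does not apply uniformly there. I would handle this exactly as in the proof of Theorem~\ref{thm:E_monotonic_h}. The integrand of \eqref{eq:parameter_integral_theta} is bounded and continuous in $(\delta,\theta)$ on $[0,1]\times[0,\pi/2]$, so $\mathcal E$ is continuous at $c=0$. Then for $0<c<c'$ strict monotonicity on $(0,1/2)$ gives $\mathcal E(h,c)<\mathcal E(h,c')$, and letting $c\to0^+$ yields $\mathcal E(h,0)\le\mathcal E(h,c)<\mathcal E(h,c')$, which gives strictness on all of $[0,1/2)$.

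Finally, for $h=1$ we have $b=0$, so the integrand is identically $1$ and $\mathcal E(1,c)=2\pi^2$, in agreement with \eqref{eq:clifford_parameter_integral}.
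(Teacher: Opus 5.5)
Your proof is correct, but it takes a different route from the paper.

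The paper works with the elliptic form $\mathcal E=4\pi\sqrt{A}\,E(k)$. It differentiates in $\delta$ using the identities \eqref{eq:elliptic_derivatives} and reduces the sign of $\partial\mathcal E/\partial\delta$ to the special-function inequality $(2-k^2)K(k)>2E(k)$ of Lemma~\ref{lem:elliptic_inequality}. The case $h=0$ is handled separately through $\mathcal F(\delta)$ and Lemma~\ref{lem:boundary_function}.

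You stay with the integral \eqref{eq:parameter_integral_theta} and observe that the radicand $a+b\delta\cos2\theta$ is affine in $\delta$. The folding $\theta\mapsto\pi/2-\theta$ then reduces the sign to the strict concavity of the square root. This is the same mechanism as the function $\mathcal R(y)$ in the proof of Lemma~\ref{lem:q_range}.

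What your route buys is brevity and uniformity. It treats $h=0$ and $0<h<1$ at once and needs neither Lemma~\ref{lem:elliptic_inequality} nor Lemma~\ref{lem:boundary_function}. What the paper's route buys is an explicit closed formula for $\partial\mathcal E/\partial\delta$ in terms of $K$ and $E$. That formula stays within the elliptic framework used for Theorem~\ref{thm:E_monotonic_h} and the scaling law of Section~\ref{sec:associated_families}.

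One simplification is available to you. Fold $\mathcal E$ itself rather than its derivative:
\[
\mathcal E(h,c)=4\pi\int_0^{\pi/4}\Bigl(\sqrt{a+b\delta x}+\sqrt{a-b\delta x}\Bigr)\,\dif\theta,
\qquad x=\cos2\theta .
\]
For each $x\in(0,1]$, the bracket is strictly decreasing in $\delta\in[0,1]$, because $0\le b\delta x\le b\le a$ and $\mathcal R$ is strictly decreasing on $[0,a]$. This gives strict monotonicity on the closed range directly, including $c=0$. You would then need neither differentiation under the integral sign nor the limiting argument at the endpoint.
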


\begin{proof}
Set $\delta=\sqrt{1-4c^2}$. Since $\delta$ is strictly decreasing in $c$,
it suffices to prove that $\mathcal E$ is strictly decreasing in $\delta$ on
$0<\delta<1$; continuity then includes the endpoint $\delta=1$, equivalently
$c=0$.
Assume first that $0<h<1$, write $t=h^2$, and set $S$ and $k$ as in \eqref{eq:Sk}.
By \eqref{eq:parameter_integral_elliptic},
$\mathcal E=4\pi\sqrt S\,E(k)$. Using
\eqref{eq:elliptic_derivatives} together with
\[
    S_\delta=\frac{1-t}{2},
    \qquad
    (k^2)_\delta=\frac{1-t^2}{2S^2},
\]
we get
\[
\frac{\partial \mathcal E}{\partial\delta}
=
\frac{\pi}{\sqrt S}
\left((1-t)E(k)+\frac{(1-t^2)(E(k)-K(k))}{k^2S}\right).
\]
Using $1-t^2=(1-t)(1+t)$ and $1+t=S(2-k^2)$, the bracket above equals
\[
\frac{1-t}{k^2}
\bigl(2E(k)-(2-k^2)K(k)\bigr),
\]
which is negative by Lemma~\ref{lem:elliptic_inequality}. Thus
$\mathcal E$ is strictly decreasing in $\delta$ for $0<h<1$.

For $h=0$, \eqref{eq:parameter_integral_elliptic} and
\eqref{eq:boundary_function_definition} give
\[
    \frac{\mathcal E(0,c)}{4\pi}=\mathcal F(\delta).
\]
The conclusion follows from Lemma~\ref{lem:boundary_function},
together with \eqref{eq:sphere_parameter_integral}. The case $h=1$ is
\eqref{eq:clifford_parameter_integral}.
\end{proof}

\begin{coro}\label{cor:energy_bounds}
If $0\le h\le1$ and $0\le c<1/2$, then
\[
    4\pi\le \mathcal E(h,c)\le2\pi^2.
\]
The upper bound is attained exactly when $h=1$, and the lower bound only at
$(h,c)=(0,0)$.
\end{coro}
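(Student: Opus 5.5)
The plan is to combine the two monotonicity theorems: move first along $h$ with $c$ fixed, then along $c$ with $h$ fixed. Fix $(h,c)$ with $0\le h\le1$ and $0\le c<1/2$.

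For the upper bound, Theorem~\ref{thm:E_monotonic_h} shows that $h\mapsto\mathcal E(h,c)$ is strictly increasing on $[0,1]$. Hence
\[
\mathcal E(h,c)\le\mathcal E(1,c)=2\pi^2
\]
by \eqref{eq:clifford_parameter_integral}. Strict monotonicity gives strict inequality whenever $h<1$, so the upper bound is attained exactly at $h=1$.

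For the lower bound, the same theorem gives $\mathcal E(h,c)\ge\mathcal E(0,c)$, with equality only when $h=0$. Theorem~\ref{thm:E_monotonic_c} applied with $h=0$ shows that $c\mapsto\mathcal E(0,c)$ is strictly increasing on $[0,1/2)$. Therefore
\[
\mathcal E(0,c)\ge\mathcal E(0,0)=4\pi
\]
by \eqref{eq:sphere_parameter_integral}, with equality only at $c=0$. Chaining the two inequalities yields $\mathcal E(h,c)\ge4\pi$, and equality forces both $h=0$ and $c=0$.

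The only delicate point is the endpoint $c=0$, where $\mathcal E(h,0)$ is defined separately by \eqref{eq:lawson_parameter_rectangle_integral} rather than by the elliptic formula. I need to confirm that the monotonicity theorems really cover this endpoint. Theorem~\ref{thm:E_monotonic_c} is stated on the closed-at-zero interval $[0,1/2)$, with continuity at $\delta=1$. To see that continuity holds, substitute $\delta=1$ into \eqref{eq:parameter_integral_theta}: this gives
\[
\mathcal E(h,0)=4\pi\int_0^{\pi/2}\sqrt{1-(1-h^2)\sin^2\theta}\,\dif\theta,
\]
and a direct substitution $s=\pi/2-\theta$, together with the symmetry of $a(s)$ on $[0,\pi]$, shows this agrees with \eqref{eq:lawson_parameter_rectangle_integral}. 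In particular it gives $\mathcal E(0,0)=4\pi$ and $\mathcal E(1,0)=2\pi^2$. With that consistency check in place, the corollary follows immediately.
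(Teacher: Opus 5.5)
Your proposal is correct and follows essentially the same route as the paper: strict monotonicity in $h$ (Theorem~\ref{thm:E_monotonic_h}) gives $\mathcal E(0,c)\le\mathcal E(h,c)\le\mathcal E(1,c)=2\pi^2$, and strict monotonicity in $c$ at $h=0$ (Theorem~\ref{thm:E_monotonic_c}), together with $\mathcal E(0,0)=4\pi$, gives the lower bound and both equality cases. Your extra check that setting $\delta=1$ in \eqref{eq:parameter_integral_theta} recovers \eqref{eq:lawson_parameter_rectangle_integral} is harmless; the paper notes this reduction only in passing.
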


\begin{proof}
By Theorem~\ref{thm:E_monotonic_h},
\[
\mathcal E(0,c)\leq\mathcal E(h,c)\leq\mathcal E(1,c)=2\pi^2.
\]
By Theorem~\ref{thm:E_monotonic_c} and
\eqref{eq:sphere_parameter_integral},
$\mathcal E(0,c)>4\pi$ for $0<c<1/2$.
The equality cases follow immediately.
\end{proof}

\section{Lawson Associated Families and Finiteness under an Energy Bound}
\label{sec:associated_families}

This section uses the identification of the Lawson associated family of a
spherical catenoid with helicoidal minimal surfaces in
\cite[Theorem~5.6]{Castro2024}, derives the scaling law for the area integral
$\mathcal E$ defined in \eqref{eq:parameter_rectangle_integral} and
\eqref{eq:lawson_parameter_rectangle_integral}, and proves the finiteness
theorem under a prescribed Willmore energy bound.

\subsection{\texorpdfstring{Associated-family parameters and scaling of $\mathcal E$}{Associated-family parameters and scaling of E}}

We use the associated family in Lawson's sense
\cite[Section~13]{Lawson1970}. For a simply connected minimal immersion
$X:S\to\mathbb S^3$, after fixing the initial data, this gives a family
$X_\theta$ of minimal isometric immersions indexed by
$\theta\in\mathbb R/2\pi\mathbb Z$, with $X_0=X$ and
$X_{\pi/2}$ the conjugate immersion. For spherical catenoids, the interval
$0\leq\theta\leq\pi$ represents the associated family up to ambient
congruence, and Castro et al.\ identify these associated immersions with
helicoidal minimal immersions \cite[Theorem~5.6]{Castro2024}.

Fix $\beta\in(0,\pi/2)$, and let $\Cat_\beta$ denote the
spherical catenoid introduced in
\cite[Corollary~3.5 and Definition~3.6]{Castro2024}, corresponding to $\operatorname{Hel}_c^h$ with
$h=0$ and $c=\frac12\sin\beta$. By a harmless abuse of notation, we
also use $\Cat_\beta:\mathbb R^2\longrightarrow\mathbb S^3$ to denote
its global parametrization $X_c^h$ from
Definition~\ref{def:global_parametrizations}. Since its parameter domain
$\mathbb R^2$ is simply connected, let
$\{(\Cat_\beta)_\theta:\theta\in[0,\pi]\}$ denote the chosen
representatives of its Lawson associated family. For $0\leq\theta<\pi$, the relations
\cite[Theorem~5.6 and Eq.~(5.8)]{Castro2024} may be written as
\begin{equation}\label{eq:castro_family_relations}
    \sin\beta\,\sin\theta
    =
    \frac{2h}{1+h^2},
    \qquad
    \sin\beta\,\cos\theta
    =
    \frac{2c(1-h^2)}{1+h^2}.
\end{equation}
For $0<\theta<\pi$, the first relation in
\eqref{eq:castro_family_relations} gives two positive reciprocal roots for
$h$. Away from $\theta=\pi/2$, the sign of the second relation shows that
$c>0$ is obtained by choosing the root with $0<h<1$ on $(0,\pi/2)$ and
the root with $h>1$ on $(\pi/2,\pi)$. At $\theta=\pi/2$, the second
relation gives $c=0$, corresponding to the Lawson spherical helicoid. Thus, for $0<\theta<\pi$, $\theta\neq\pi/2$,
\cite[Theorem~5.6]{Castro2024} identifies $(\Cat_\beta)_\theta$, up to
ambient congruence, with the helicoidal minimal surface
$\operatorname{Hel}_{c(\theta)}^{h(\theta)}$. Throughout this section, we use the global parametrization
$X_{c(\theta)}^{h(\theta)}$ from
Definition~\ref{def:global_parametrizations} as the chosen representative
of this associated surface and take the quotient by
$\operatorname{Aut}(X_{c(\theta)}^{h(\theta)})$. Here
\begin{equation}\label{eq:lawson_h_theta}
h(\theta)
=
\begin{cases}
\displaystyle
\frac{1-\sqrt{1-\sin^2\beta\,\sin^2\theta}}
{\sin\beta\,\sin\theta},
&
0<\theta<\dfrac{\pi}{2},\\[10pt]
\displaystyle
\frac{1+\sqrt{1-\sin^2\beta\,\sin^2\theta}}
{\sin\beta\,\sin\theta},
&
\dfrac{\pi}{2}<\theta<\pi,
\end{cases}
\end{equation}
and
\begin{equation}\label{eq:lawson_c_theta}
    c(\theta)
    =
    \frac{\sin\beta\,\cos\theta}{2}
    \frac{1+h(\theta)^2}{1-h(\theta)^2}.
\end{equation}
\begin{rem}
The associated surfaces at $\theta=0$ and $\theta=\pi$ are congruent
spherical catenoids. For both endpoints we choose the convenient congruent
representatives
\begin{equation}\label{eq:associated_endpoint_convention}
h(0)=h(\pi)=0,
\qquad
c(0)=c(\pi)=\frac12\sin\beta.
\end{equation}
At $\theta=\pi$, the endpoint is represented by the congruent rotational
parametrization with $h=0$; the branch $h(\theta)>1$ is used only on
$(\pi/2,\pi)$. The formulas
\eqref{eq:lawson_h_theta}--\eqref{eq:lawson_c_theta} apply on
$(0,\pi)\setminus\{\pi/2\}$.

At $\theta=\pi/2$, one has $c=0$, and the two limiting helicoidal
parametrizations have reciprocal pitches
\begin{equation}\label{eq:lawson_conjugate_pitch}
    h_-=\tan\frac{\beta}{2},\qquad
    h_+=\cot\frac{\beta}{2}.
\end{equation}
They give two helicoidal parametrizations of the conjugate Lawson spherical
helicoid; see
\cite[Theorem~5.4, Theorem~5.6, and Remark~5.7]{Castro2024}. For
definiteness, we set
\begin{equation}\label{eq:associated_midpoint_convention}
    h(\pi/2):=h_-,
    \qquad
    c(\pi/2):=0.
\end{equation}
More generally, for $0<\theta<\pi$ with $\theta\neq\pi/2$, the two
roots of
\[
    (\sin\beta\,\sin\theta)h^2-2h+\sin\beta\,\sin\theta=0
\]
have product $1$, so
\begin{equation}\label{eq:h_theta_reciprocal}
    h(\pi-\theta)=\frac{1}{h(\theta)}.
\end{equation}
Since $\sin\beta\,\sin\theta<1$, neither solution attains the value $h=1$.
\end{rem}

By \cite[Theorem~5.6]{Castro2024} and the signs in
\eqref{eq:lawson_c_theta},
\[
h(\theta)>0,
\qquad
0<c(\theta)<\frac12
\]
on $(0,\pi)\setminus\{\pi/2\}$. We may therefore define
\[
\delta(\theta):=\sqrt{1-4c(\theta)^2}.
\]
Eliminating $\theta$ from \eqref{eq:lawson_c_theta} by \eqref{eq:castro_family_relations} gives
\[
4c(\theta)^2
=
\frac{
\sin^2\beta\bigl(1+h(\theta)^2\bigr)^2-4h(\theta)^2
}{
\bigl(1-h(\theta)^2\bigr)^2
}.
\]
It follows that
\begin{equation}\label{eq:delta_theta}
\delta(\theta)
=
\frac{1+h(\theta)^2}{|1-h(\theta)^2|}\cos\beta.
\end{equation}
This relation yields a scaling law for the area integral $\mathcal E$.

\begin{coro}\label{cor:energy_along_family}
For fixed $\beta\in(0,\pi/2)$ and $0<\theta<\pi$, $\theta\neq\pi/2$,
\begin{equation}\label{eq:associated_energy_ratio}
    \mathcal E\bigl(h(\pi-\theta),c(\pi-\theta)\bigr)
    =
    \frac{1}{h(\theta)}
    \mathcal E\bigl(h(\theta),c(\theta)\bigr).
\end{equation}
At $\theta=\pi/2$, the continuous extensions from the two parameter
intervals satisfy
\[
\mathcal E(h_+,0)
=
\frac{1}{h_-}\mathcal E(h_-,0),
\qquad
h_-=\tan\frac{\beta}{2},
\quad
h_+=\cot\frac{\beta}{2}=\frac{1}{h_-}.
\]
Moreover, on each of the intervals $(0,\pi/2)$ and $(\pi/2,\pi)$,
\begin{equation}\label{eq:associated_integral_scaling}
    \mathcal E\bigl(h(\theta),c(\theta)\bigr)
    =C(\beta)\sqrt{1+h(\theta)^2},
\end{equation}
where
\begin{equation}\label{eq:associated_energy_constant}
    C(\beta)=
    4\pi
    \sqrt{\frac{1+\cos\beta}{2}}\,
    E\!\left(\sqrt{\frac{2\cos\beta}{1+\cos\beta}}\right).
\end{equation}
Here $E$ is the complete elliptic integral of the second kind defined in
\eqref{eq:complete_elliptic_integrals}.
\end{coro}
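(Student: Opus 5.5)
The plan is to substitute the relation \eqref{eq:delta_theta} into the elliptic-integral formula \eqref{eq:parameter_integral_elliptic} of Theorem~\ref{thm:parameter_rectangle_integral}. This gives the scaling law \eqref{eq:associated_integral_scaling} first. The ratio \eqref{eq:associated_energy_ratio} and the midpoint identity then follow from the reciprocity \eqref{eq:h_theta_reciprocal}.

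\emph{Step 1: the branch $0<\theta<\pi/2$, where $0<h<1$.} Write $h=h(\theta)$. By \eqref{eq:delta_theta}, $\delta(1-h^2)=(1+h^2)\cos\beta$. Inserting this into \eqref{eq:AB} gives
\[
A(h,\delta)=\frac{1+h^2}{2}+\frac{\delta(1-h^2)}{2}=\frac{(1+h^2)(1+\cos\beta)}{2},
\qquad
B(h,\delta)=(1+h^2)\cos\beta .
\]
Hence $k^2=B/A=2\cos\beta/(1+\cos\beta)$ is independent of $\theta$ and lies in $(0,1)$. Substituting into \eqref{eq:parameter_integral_elliptic} gives $\mathcal E=4\pi\sqrt{1+h^2}\sqrt{(1+\cos\beta)/2}\,E(k)=C(\beta)\sqrt{1+h^2}$.

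\emph{Step 2: the branch $\pi/2<\theta<\pi$, where $h>1$.} Now $|1-h^2|=h^2-1$, so $\delta(1-h^2)=-(1+h^2)\cos\beta$. This gives
\[
A=\frac{(1+h^2)(1-\cos\beta)}{2},
\qquad
B=-(1+h^2)\cos\beta .
\]
Then $k^2=-\eta^2$ with $\eta^2=2\cos\beta/(1-\cos\beta)$. Here we use Remark~\ref{rem:imaginary_modulus_branch}. Since $1+\eta^2=(1+\cos\beta)/(1-\cos\beta)$ and $\eta^2/(1+\eta^2)=2\cos\beta/(1+\cos\beta)$, we get
\[
\sqrt{A}\,E(i\eta)=\sqrt{1+h^2}\,\sqrt{\tfrac{1-\cos\beta}{2}}\,\sqrt{\tfrac{1+\cos\beta}{1-\cos\beta}}\;E\!\left(\sqrt{\tfrac{2\cos\beta}{1+\cos\beta}}\right).
\]
The factors $1-\cos\beta$ cancel, which again yields $C(\beta)\sqrt{1+h^2}$. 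I expect this step to be the only delicate point. The imaginary-modulus transformation and the sign of $1-h^2$ must be tracked carefully, so that both branches produce the same constant $C(\beta)$.

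\emph{Step 3: the ratio and the midpoint.} By \eqref{eq:h_theta_reciprocal}, $h(\pi-\theta)=1/h(\theta)$. The scaling law on the opposite interval then gives
\[
\mathcal E\bigl(h(\pi-\theta),c(\pi-\theta)\bigr)=C(\beta)\sqrt{1+h^{-2}}=\frac{C(\beta)\sqrt{1+h^2}}{h},
\]
which is \eqref{eq:associated_energy_ratio}. At $\theta=\pi/2$ we have $c=0$ and $\delta=1$. For $h_\pm$ given by \eqref{eq:lawson_conjugate_pitch}, one checks that $(1+h_\pm^2)/|1-h_\pm^2|=1/\cos\beta$. Thus \eqref{eq:delta_theta} still holds with $\delta=1$, and the computations of Steps~1--2 apply verbatim: Step~1 for $h_-<1$ and Step~2 for $h_+>1$. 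This gives $\mathcal E(h_\pm,0)=C(\beta)\sqrt{1+h_\pm^2}$, and hence $\mathcal E(h_+,0)=\mathcal E(h_-,0)/h_-$.

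These values agree with the continuous extensions from the two intervals. This is because $\mathcal E$ is continuous in $(h,\delta)$ by the integral representation \eqref{eq:parameter_integral_theta}, and $h(\theta)\to h_\mp$ as $\theta\to\pi/2^{\mp}$.
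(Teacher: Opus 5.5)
Your proposal is correct and follows the paper's proof essentially line for line. On each branch you substitute \eqref{eq:delta_theta} into \eqref{eq:AB} and \eqref{eq:parameter_integral_elliptic}, using Remark~\ref{rem:imaginary_modulus_branch} when $h>1$, and then use the reciprocity \eqref{eq:h_theta_reciprocal}. Your direct check that $(1+h_\pm^2)/|1-h_\pm^2|=1/\cos\beta$ is a small but useful addition: it lets the same computation run at $c=0$, $\delta=1$, and so shows explicitly that the one-sided limits equal the actual values $\mathcal E(h_\pm,0)$, a step the paper leaves to continuity.
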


\begin{proof}
Set $h=h(\theta)$, $c=c(\theta)$, and
$\delta=\delta(\theta)$. By
\eqref{eq:parameter_integral_elliptic},
\[
\mathcal E(h,c)=4\pi\sqrt{A(h,\delta)}\,E(k),
\qquad
k^2=\frac{B(h,\delta)}{A(h,\delta)}.
\]
If $0<\theta<\pi/2$, then $0<h<1$, and by definition \eqref{eq:AB} and
\eqref{eq:delta_theta}, we have
\[
A(h,\delta)=\frac{1+h^2}{2}(1+\cos\beta),
\qquad
B(h,\delta)=(1+h^2)\cos\beta.
\]
Thus \eqref{eq:associated_integral_scaling} holds with the constant
\eqref{eq:associated_energy_constant}.
If $\pi/2<\theta<\pi$, then $h>1$, and
\[
A(h,\delta)=\frac{1+h^2}{2}(1-\cos\beta),
\qquad
B(h,\delta)=-(1+h^2)\cos\beta.
\]
Here $k=i\eta$, where
$\eta^2=2\cos\beta/(1-\cos\beta)$. By
Remark~\ref{rem:imaginary_modulus_branch},
\[
\sqrt{A(h,\delta)}\,E(i\eta)
=
\sqrt{A(h,\delta)}\sqrt{1+\eta^2}\,
E\!\left(\frac{\eta}{\sqrt{1+\eta^2}}\right).
\]
Since
\[
\sqrt{A(h,\delta)}\sqrt{1+\eta^2}
=
\sqrt{\frac{1+h^2}{2}(1+\cos\beta)},
\qquad
\frac{\eta}{\sqrt{1+\eta^2}}
=
\sqrt{\frac{2\cos\beta}{1+\cos\beta}},
\]
the same scaling law follows on $\pi/2<\theta<\pi$. Combining it with \eqref{eq:h_theta_reciprocal} proves
\eqref{eq:associated_energy_ratio}.
The stated one-sided identity at $\theta=\pi/2$ follows from
\eqref{eq:lawson_conjugate_pitch} and the continuous extension of
\eqref{eq:associated_integral_scaling} along the two parameter intervals.
\end{proof}

For the two Lawson parametrizations with reciprocal pitches at
$\theta=\pi/2$, one is compact if and only if the other is. Indeed, if $h_-=j/\nu$ is
written in lowest terms, then $h_+=\nu/j$. Whenever they are compact,
denote their quotient surfaces by $Q_0^{h_-}$ and $Q_0^{h_+}$, as in
\eqref{eq:lawson_automorphism_quotient}. Then
Corollary~\ref{cor:energy_along_family} and
Theorem~\ref{thm:lawson_quotient_energy} give
\[
\begin{aligned}
\mathcal W(Q_0^{h_+})
&=j\,\mathcal E(h_+,0)\\
&=j\,\frac{1}{h_-}\mathcal E(h_-,0)\\
&=\nu\,\mathcal E(h_-,0)
 =\mathcal W(Q_0^{h_-}).
\end{aligned}
\]
Thus the compactness criterion and the Willmore energy of the quotient
surface by the automorphism group at $\theta=\pi/2$ do not depend
on which of the two Lawson parametrizations is chosen.

\subsection{Finiteness under a Willmore energy bound}

\begin{thm}\label{thm:finiteness}
Fix $\beta\in(0,\pi/2)$, and let $h(\theta)$ and $c(\theta)$ be the
helicoidal parameters defined by
\eqref{eq:lawson_h_theta}--\eqref{eq:lawson_c_theta} on
$(0,\pi)\setminus\{\pi/2\}$ and by
\eqref{eq:associated_endpoint_convention} and
\eqref{eq:associated_midpoint_convention} at
$\theta=0,\pi/2,\pi$. For every $W_0>0$, there are only finitely many
$\theta\in[0,\pi]$ for which the associated surface
$(\Cat_\beta)_\theta$ is compact and its quotient by the
automorphism group has Willmore energy at most $W_0$.
\end{thm}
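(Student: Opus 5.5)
The plan is to convert the energy bound into a bound on the denominators of the rational closing data. This leaves only finitely many admissible pitches $h$, and each pitch determines at most two values of $\theta$.

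\emph{Endpoints and midpoint.} The three special values $\theta\in\{0,\pi/2,\pi\}$ contribute at most three surfaces, so they can be discarded. Fix $\theta\in(0,\pi)\setminus\{\pi/2\}$ with $(\Cat_\beta)_\theta$ compact. By Theorem~\ref{thm:compactness}, $h=h(\theta)\in\mathbb Q$. Write $h=j/\nu$ in lowest terms and $q(h,c)=p/r$, as in \eqref{eq:reduced_closing_data}.

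\emph{Energy lower bound.} By Theorem~\ref{thm:total_energy_lattice_index} and the scaling law \eqref{eq:associated_integral_scaling},
\[
\mathcal W(Q_{c(\theta)}^{h(\theta)})=\operatorname{lcm}(r,\nu)\,C(\beta)\sqrt{1+h^2}\ \ge\ \nu\, C(\beta)\max(1,h).
\]
Here $C(\beta)>0$, since $C(\beta)=4\pi\mathcal F(\cos\beta)>4\pi$ by Lemma~\ref{lem:boundary_function}. Note that $\nu h=j$. Hence $\mathcal W\le W_0$ forces both $\nu\le W_0/C(\beta)$ and $j\le W_0/C(\beta)$. Only finitely many pairs $(j,\nu)$ satisfy this, so $h(\theta)$ ranges over a finite set.

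\emph{Finite fibres of $\theta\mapsto h(\theta)$.} It remains to show that each value of $h$ is attained by finitely many $\theta$. By \eqref{eq:castro_family_relations},
\[
\sin\theta=\frac{2h}{(1+h^2)\sin\beta},
\]
which gives at most two values of $\theta\in(0,\pi)$. One of these lies in $(0,\pi/2)$ and the other in $(\pi/2,\pi)$; moreover the branch choice in \eqref{eq:lawson_h_theta} (namely $h<1$ on the first interval and $h>1$ on the second) allows only one of them. So each $h$ corresponds to at most one $\theta$.

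The main point needing care is that the bound must control $h$ from above as well as from below. A bound on $\nu$ alone would leave infinitely many rationals $j/\nu$ in the range $h\in(0,\infty)$. This is why the factor $\sqrt{1+h^2}\ge h$ from Corollary~\ref{cor:energy_along_family} is essential: it turns the energy bound into a bound on $j$ as well. The range of $h(\theta)$ is in fact $(0,\tan(\beta/2))\cup(\cot(\beta/2),\infty)$, which is unbounded, so this upper control is genuinely needed. The requirement $q\in\mathbb Q$ is not used beyond the fact that $\operatorname{lcm}(r,\nu)\ge\nu$.
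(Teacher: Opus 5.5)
Your proof is correct and follows essentially the same route as the paper. The identity $\mathcal W=\operatorname{lcm}(r,\nu)\,C(\beta)\sqrt{1+h^2}$ bounds both the denominator $\nu$ and the size of $h$ (hence $j$), which leaves finitely many pitches, and each pitch is attained by at most one $\theta$. The differences are cosmetic: you bound $\nu$ and $j$ in one step via $\sqrt{1+h^2}\ge\max(1,h)$, where the paper first bounds $N$ using $C(\beta)>4\pi$. You also obtain injectivity of $\theta\mapsto h(\theta)$ from the $\sin\theta$ relation and the $h<1$ versus $h>1$ branch split, where the paper uses monotonicity of $\tan(\vartheta/2)$ and $\cot(\vartheta/2)$.
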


\begin{proof}
By the preceding identification, it suffices to consider
$(0,\pi/2)\cup(\pi/2,\pi)$, since the three remaining parameter values
$0,\pi/2,\pi$ form a finite set. For a compact member with $0<c<1/2$, write
\[
q(h,c)=\frac{p}{r},
\qquad
h=\frac{j}{\nu}
\]
in lowest terms, as in \eqref{eq:reduced_closing_data}, and set
\[
    N:=\operatorname{lcm}(r,\nu).
\]
Let $\mathcal W$ denote the Willmore energy of its quotient surface by the
automorphism group. By
Theorem~\ref{thm:total_energy_lattice_index} and
\eqref{eq:associated_integral_scaling},
\[
    \mathcal W
    =N\mathcal E(h,c)
    =N\,C(\beta)\sqrt{1+h^2}.
\]
Moreover, the constant $C(\beta)$ defined in
\eqref{eq:associated_energy_constant} satisfies
\[
    C(\beta)
    =\mathcal E\left(0,\frac12\sin\beta\right)>4\pi
\]
by \eqref{eq:parameter_integral_elliptic} and
Corollary~\ref{cor:energy_bounds}. Hence $\mathcal W\le W_0$ implies
\[
    N<\frac{W_0}{4\pi},
\]
so only finitely many values of $N$ can occur.

For each fixed $N$, we have $\nu\mid N$ and
\[
    h^2
    \le
    \left(\frac{W_0}{N\,C(\beta)}\right)^2-1.
\]
Thus $\nu$ ranges over the finitely many divisors of $N$, and for each
such $\nu$, the numerator $j$ is bounded. Consequently, only finitely
many pitches $h=j/\nu$ are possible.

Finally, rewrite \eqref{eq:lawson_h_theta} as
\[
    h(\theta)=
    \begin{cases}
    \tan\bigl(\vartheta(\theta)/2\bigr),&0<\theta<\pi/2,\\[4pt]
    \cot\bigl(\vartheta(\theta)/2\bigr),&\pi/2<\theta<\pi,
    \end{cases}
    \qquad
    \vartheta(\theta):=\arcsin(\sin\beta\sin\theta).
\]
The function $\vartheta$ is strictly increasing on $(0,\pi/2)$ and strictly
decreasing on $(\pi/2,\pi)$. Since $\tan(\vartheta/2)$ is increasing and
$\cot(\vartheta/2)$ is decreasing, $h(\theta)$ is strictly increasing on each
parameter interval. Hence each admissible pitch $h(\theta)$ determines at most one
value of $\theta$ in each interval, and the set of admissible values of
$\theta$ is finite.
\end{proof}

This proves Theorem~\ref{mainthm:finiteness}.

\end{document}